\def\R{{\mathbb R}}
\def\N{{\mathbb N}}
\def\le{\leqslant}
\def\ge{\geqslant}
\newcommand{\eps}{\varepsilon}
\theoremstyle{plain}
\newtheorem{theorem}{Theorem}[section]
\theoremstyle{definition}
\newtheorem*{remark*}{Remark}
\numberwithin{equation}{section}
\title{Positivity-preserving and asymptotic preserving method for 2D Keller-Segal equations}
\author{Jian-Guo Liu\thanks{Department of Mathematics and Department of Physics, Duke University, Box 90320, Durham NC 27708, USA (jliu@phy.duke.edu)}, Li Wang\thanks{Department of Mathematics and Computational and Data-Enabled Science and Engineering Program, SUNY at Buffalo, 244 Mathematics Building, Buffalo, NY 14260, USA (lwang46@buffalo.edu)}, and Zhennan Zhou\thanks{Department of Mathematics, Duke University, Box 90320, Durham NC 27708, USA (zhennan@math.duke.edu)} }
\begin{document}
\maketitle

\begin{abstract}
We propose a semi-discrete scheme for 2D Keller-Segel equations based on a symmetrization reformation, which is equivalent to the convex splitting method and is free of any nonlinear solver. We show that, this new scheme is stable as long as the initial condition does not exceed certain threshold, and it asymptotically preserves the quasi-static limit in the transient regime. Furthermore, we prove that the fully discrete scheme is conservative and positivity preserving, which makes it ideal for simulations. The analogical schemes for the radial symmetric cases and the subcritical degenerate cases are also presented and analyzed. With extensive numerical tests, we verify the claimed properties of the methods and demonstrate their superiority in various challenging applications.

\end{abstract}

\section{Introduction}

In this paper, we consider the following 2D Keller-Segel equations 
\begin{align}
\partial_t \rho^\eps &= \Delta \rho^\eps - \nabla \cdot (\rho^\eps \nabla c^\eps), \quad x\in \R^2, t>0 \label{eq:rho0}\\
\eps \partial_t c^\eps & =\Delta c^\eps  + \rho^\eps, \quad x\in \R^2, t>0 \label{eq:c0}\\
\rho^{\eps}(x,0)& =f(x),\quad c^{\eps}(x,0)=g(x).
\end{align}
This system was originally established by Patlak \cite{Patlak1953} and Keller \& Segel \cite{KellerSegel1971} to model the phenomenon of chomotaxis, in which cells approach the chemically favorable environments according to the chemical substance generated by cells . Here $\rho^\eps(x,t)$ denotes the density distribution {of} cells and $c^{\eps}(x,t)$ denotes { the} chemical concentration. Mathematically, this model describes the
competition between the diffusion and the nonlocal aggregation. This type of competition is ubiquitous in evolutionary systems arisen in biology, social science and other interacting particle systems, numerous mathematical studies of the Keller-Segel system and its variants have been conducted in recent years; see \cite{Perthame} for a general discussion. 

When $\eps>0$, the system (\ref{eq:rho0}) (\ref{eq:c0}) is called the parabolic-parabolic model, whereas when $\eps=0$, it is called the parabolic-elliptic model.  When $\eps \ll 1$, the model is in a transition regime between the parabolic-parabolic and the parabolic-elliptic cases. For the parabolic-elliptic model, it is well known that $M_c = 8\pi$ is the critical mass that distinguishes the global-existent solution from finite-time blow up solution by utilizing the logarithmic Hardy-Littlewood Sobolev inequality \cite{BDP2006,Perthame}. More recently, Liu and Wang have proved the uniqueness of the weak solutions when the initial mass is less than $8 \pi$ and the initial free energy and the second moment are finite \cite{LiuWang2016}. For the parabolic-parabolic model, the global existence is analyzed and the critical mass (which is also $8\pi$) is derived in \cite{CalvezCorrias2008}. Most analytical results rely on the variational formation. 

In particular, we denote the free energy of the parabolic-parabolic system as
\begin{equation} \label{eqn:energy1}
\mathcal F (\rho,c)=\int_{\mathbb R^2}\left[  \rho \log \rho -\rho -\rho c + \frac{1}{2}|\nabla c|^2\right] dx,
\end{equation}
where we have suppressed the superscript $\eps$ for simplicity; see \cite{BCKKLL2014,CongLiu}. Then the system \eqref{eq:rho0} and \eqref{eq:c0} can be formulated by the following mixed conservative and nonconservative gradient flow
\[
\rho_t = \nabla \cdot \left(\rho \nabla \frac{\delta \mathcal F}{\delta \rho} \right), \quad c_t = - \frac{\delta \mathcal F}{\delta c}.
\]
This mixed variational structure is known as the Le Ch\"aterlier Principle. Formally when $\rho$ and $c$ solve the parabolic-parabolic system, the free energy $\mathcal F (t) =\mathcal F (\rho(\cdot,t),c(\cdot,t)) $ satisfies the following entropy-dissipation equality
\[
\frac{d}{dt} \mathcal F (t) + \int_{\R^2} \left[ \rho \left|\nabla \left(\log \rho -c \right) \right|^2 + |\partial_t c|^2 \right]dx =0.
\] 
In the parabolic-elliptic case, one can replace the equation of $c$ using the Newtonian potential
\[
c(x,t)= \frac{1}{2\pi} \log |x| * \rho (x,t),
\]
and the free energy for some proper $\rho$ is given by
\begin{equation} \label{eq:111}
\mathcal F (\rho)= \int_{\mathbb R^2}\left[  \rho \log \rho -\rho \right]dx + \frac{1}{2} \int_{\R^2 \times \R^2} \frac{1}{2\pi} \log |x-y| \rho(x) \rho(y) dx\, dy.
\end{equation}

We also consider the extension of the 2D Keller-Segel equations with degenerate diffusion
\begin{align}
\partial_t \rho^\eps &= \Delta (\rho^\eps)^m - \nabla \cdot (\rho^\eps \nabla c^\eps), \quad x\in \R^2, t>0 \label{eq:rhom0} \\
\eps \partial_t c^\eps & =\Delta c^\eps  + \rho^\eps, \quad x\in \R^2, t>0 \label{eq:cm0}\\
\rho^{\eps}(x,0)& =f(x),\quad c^{\eps}(x,0)=g(x).
\end{align}
Here $m$ is the diffusion exponent, and we call it supercritical when $0<m<1$, critical when $m=1$ and subcritical when $m>1$.  It is worth noting that the classification of the exponent is dimension dependent, the readers may refer to \cite{BianLiu2013,BrennerCKSV} for a broad summary.  {The free energy can be similarly defined  for this system and the entropy-dissipation equality can be derived , which we shall skip in this paper.}


While the Keller-Segel equations have been well studied and understood in the analytical aspect, there is much to explore in the numerical computations. Owing to the similarity to the drift-diffusion equation, Filbet proposed an implicit Finite Volume Method (FVM) for the Keller-Segel model \cite{Filbet2006}. However, instead of being repulsive,  the aggregation term in the Keller-Segel equation is attractive which competes  against the diffusion term, the FVM  method is constrained by severe stability constraint. In \cite{ChertockKurganov2008}, Chertock and Kurganov designed a second-order positivity preserving central-upwind
scheme for the chemotaxis models by converting the Keller-Segel equations to an advection-reaction-diffusion system. The main issue there is that the Jacobian matrices coming from the advection part may have complex eigenvalues, which force the advection part to be solved together with the stabilizing diffusion terms, and result in complicated CFL conditions. Based on this formulation, Kurganov and his collaborators have conducted many extensions, including more general chemotaxis flux model, multi-species model and constructing an alternative discontinuous Galerkin method; see \cite{ChertockKurganovWangWu2012,EpshteynKurganov2008,KurganovLM2014}.  Very recently, Li et. al have improved the results in by introducing the local discontinuous Galerkin method with optimal rate of convergence \cite{LiShuYang}. Another drawback of the methods based on the  advection-reaction-diffusion formulation is, in the transient regime when $\eps \ll 1$,  this methods suffer from the stiffness in $\eps$ and the stability constrains are therefore magnified. Besides, there is a kinetic formulation modeling the competition of diffusion and nonlocal aggregation, and some works on numerical simulation are available in \cite{CarrilloYan2013, ChengGamba2012}.
  
In this work, we aim to develop a numerical method which preserves both positivity and asymptotic limit. Namely, the numerical method does not generate negative density if initialized properly under a less strict stability condition. Moreover, such condition does not deteriorate with the decreasing of $\eps$, and when $\eps \rightarrow 0$, the discrete scheme of the parabolic-parabolic system automatically becomes a stable solver to the parabolic-elliptic system. In other words, we expect the numerical method to preserve the quasi-static limit of the Keller-Segel system in the transient regime. 

The key ingredient in our scheme is the following reformulation of the density equation \eqref{eq:rho0} 
\begin{equation}\label{eq:sym1}
\partial_t \rho^\eps  = \nabla \cdot \left( e^{c^\eps} \nabla \left( \frac{\rho^\eps}{e^{c^\eps}} \right) \right),
\end{equation}
which is reminiscent of the symmetric Fokker-Planck equation. Therefore, we can propose a semi-discrete approximation of (\ref{eq:sym1}) in the following way
\begin{equation}\label{semi1}
\frac{\rho^{n+1}-\rho^n}{\Delta t}=\nabla \cdot \left( e^{c(\rho^n)} \nabla \left( \frac{\rho^{n+1}}{e^{c(\rho^n)}} \right) \right).
\end{equation}
It is interesting to point out that the above discretization (\ref{semi1}) is equivalent to a first order convex splitting scheme \cite{GLWS2014}. To see this,  we reformulate \eqref{semi1} as
\[
\frac{\rho^{n+1}-\rho^n}{\Delta t} = \Delta \rho^{n+1}-\nabla \cdot \left(\rho^{n+1} \nabla c(\rho^n) \right)= \nabla \cdot \left(\rho^{n+1} \nabla \log \rho^{n+1} \right)-\nabla \cdot \left(\rho^{n+1} \nabla c(\rho^n) \right).
 \] 
{Further, we use the finite difference approximation to the spatial discretization.} The analog of the equation with the diffusion exponent $m\ne 1$ is 
\begin{equation}\label{eq:symn1}
\partial_t \rho^\eps  = \nabla \cdot \left[ \rho^\eps \exp\left(c^\eps-\frac{m}{m-1}(\rho^\eps)^{m-1}\right) \nabla \exp\left(-c^\eps+\frac{m}{m-1}(\rho^\eps)^{m-1} \right) \right].
\end{equation}
We shall design numerical methods based on this formulation as well.

The rest of the paper is organized as follows. We conduct asymptotic analysis to the Keller-Segel equations in the transient regime ($\eps \ll 1$) in Section \ref{sec:asymp}. In Section \ref{sec:m1}, we give a detailed construction and analysis of the numerical method, prove its stability, asymptotic preserving and positivity preserving properties, explore its high order accuracy analog and discuss its simplified structure in radial symmetric cases. In Section \ref{sec:mn1}, we extend the numerical method to the Keller-Segel equations with degenerate diffusions. Several numerical examples are given in the last section to verify the claimed properties and demonstrate its application in various challenging cases, including blow-up solutions, degenerate diffusions with large $m$ (see \cite{CraigKimYao}) and two-species models with different blowup behavior (see \cite{KurganovLM2014}). 


\subsection{Asymptotic analysis for the quasi-static limit}  \label{sec:asymp}

We carry out the asymptotic analysis to the solutions of the Keller-Segel equations (\ref{eq:rho0}) (\ref{eq:c0}) when $\eps \ll 1$ in the following. Due to the presence of the small parameter $\eps$, the solution $c^\eps$ is expected to experience a transient layer with a fast time scale $\tau=t/\eps$. In particular, we construct the following ansatz for solutions
\[
\rho^\eps(x,t)=\rho_\eps^0(x,t)+\eps \rho_\eps^1(x,t) \, ;
\]
\[
c^\eps(x,t)=c^0_{\eps,\text{in}}(x, \tau)+c^0_{\eps,\text{out}}(x,t)+\eps c^1_\eps (x,t) \, ,
\]
where $c_{\text{in}}(x,\tau)$ represents the solution inside the transition layer and thus depends on $\tau$. Plugging this ansatz into the equations  (\ref{eq:rho0}) (\ref{eq:c0}) and collecting the systems due to their orders, we have, to the leading order: 
\begin{align}
\partial_t \rho^0_\eps & = \Delta \rho^0_\eps + \nabla \cdot \left(\rho^0_\eps \nabla \left( c^0_{\eps,\text{in}}+c^0_{\eps,\text{out}} \right)  \right), \\
\partial_\tau c^0_{\eps,\text{in}} &= \Delta c^0_{\eps,\text{in}}, \\
0 & = \Delta  c^0_{\eps,\text{out}} + \rho^0_\eps. \label{eq:cout} 
\end{align}
The initial conditions are given by
\begin{align}
\rho^0_\eps(x,0)=f(x), \quad
c^0_{\eps,\text{in}}(x,0)+c^0_{\eps,\text{out}}(x,0) = g(x).\label{init:c0}
\end{align}
Clearly, equations \eqref{eq:cout}--\eqref{init:c0} imply that
\[
c^0_{\eps,\text{out}}(x,0) = (-\Delta)^{-1} f(x), \quad  c^0_{\eps,\text{in}}(x,0)=g(x)-(-\Delta)^{-1} f(x).
\]
Therefore,  if initially we have $f(x)=-\Delta g(x)$, 
 there is no initial layer in the solution $c^\eps$.
 The next order expansions solve the following system
 \begin{align}
 \partial_t \rho^1_\eps &= \Delta \rho^1_\eps - \nabla \cdot \left( \rho^1_{\eps} \nabla \left(c^0_{\eps,\text{in}}+c^0_{\eps,\text{out}} \right) \right) -  \nabla \cdot \left( \rho^0_{\eps} \nabla c^1_\eps  \right)-  \eps \nabla \cdot \left( \rho^1_{\eps} \nabla c^1_\eps  \right),\\
 \eps \partial_t c^1_\eps &= \Delta  c^1_\eps + \rho^1 - \partial_t c^0_{\eps,\text{out}},
 \end{align}
 with initial conditions
 \[
 \rho^1_\eps(x,0)=0,\quad c^1_\eps(x,0)=0.
 \]
Thus if we can show the boundedness of $\rho^1_\eps$ and $c^1_\eps$,  the validity of the ansatz we proposed will be justified. Further, certain estimates of $c^0_{\eps,\text{in}}$ are needed to show that as $\eps \rightarrow 0$,  the corrections terms vanish and the leading order system converges to the parabolic-elliptic system 
 \begin{align}
\partial_t \rho &= \Delta \rho - \nabla \cdot (\rho  \nabla c), \\
0 & =\Delta c  + \rho,\\
\rho(x,0)& =f(x).
\end{align}

We remark that, the above asymptotic analysis is unclear from a rigorous standpoint, which is beyond the scope of this paper as we focus on designing numerical schemes. Nevertheless, we shall explore numerically the asymptotic behavior of the solutions to give an intuitive justification of the above formal derivation. 

\section{Numerical schemes for the critical case $m=1$}\label{sec:m1}

In this section, we aim to propose numerical schemes for the Keller-Segel system \eqref{eq:rho0} \eqref{eq:c0}, which preserves the parabolic-elliptic limit in the discrete level as $\eps \rightarrow 0$. We show that, under the small data assumption, our scheme (both first and second order) are stable. The spatial discretization is carried out based on a symmetrization of the operators, with which we are able to prove its properties of mass conservation and positivity preservation. The extension to the radially symmetric cases is discussed at the end of this section.

\subsection{A first order semi-discrete scheme and the small data condition}
We first focus on the time discretization and present a semi-discrete scheme for the Keller-Segel equations. Denote $\Delta t$ the time step, then $t^n = n\Delta t $ for $n \in \mathbb{N}$ and $f^n(x)$ represents the numerical approximation to $f(x,t^n)$. Without loss of generality, {we assume homogeneous Dirichlet boundary condition on a bounded Lipschitz domain $\Omega  \subset R^2$} so that no boundary contribution shows up when applying integration by parts. {In this paper, unless specified, all the norms $\| \cdot \|$ denote the $L^2$ norm on the domain $\Omega$.} In theory, other boundary condition can be similarly analyzed and we shall omit them here. 

For stability concern, we want to use implicit method as far as we can, but due to the nonlinearity of the system, this would require a Newton solver that may converge slowly. Here we propose the following semi-discrete scheme: 
\begin{align} \label{eq:rho}
\frac{\rho^{n+1}-\rho^n}{\Delta t} =\Delta \rho^{n+1} - \nabla \cdot (\rho^{n+1} \nabla c^{n+1}) \, ,\\ \label{eq:c}
\eps \frac{c^{n+1}-c^n}{\Delta t} = \Delta c^{n+1}  + \rho^n
\end{align}
to handle the above-mentioned two difficulties. As written, \eqref{eq:c} is just a linear equation for $c^{n+1}$, and thus can be solved cheaply by inverting a symmetric matrix via conjugate gradient or directly using pseudo-spectral method. We will elaborate on it in the next sections. Once $c^{n+1}$ is obtained, (\ref{eq:rho}) reduces to a linear equation for $\rho$ which can also be solved with ease if discretized appropriately. 
Also, we observe that, if we formally take the $\eps \rightarrow 0$ limit with $\Delta t$ fixed, the numerical scheme converges to a semi-discrete method for the limiting parabolic-elliptic model.

To show the stability of this scheme, we have the following theorem. 
\begin{theorem}
{Given a final time $T$, then for $n\Delta t \le T$, assume the numerical solution obtained by  the semi-discrete numerical method \eqref{eq:rho} and \eqref{eq:c} for the Keller-Segel equations satisfies the following technical condition
\begin{equation}\label{est:noblowup}
\Delta t \| \nabla \rho^n \| \le 1, \quad \forall n \ge 0. 
\end{equation}
Then, the method is stable if the small data condition
 \begin{equation}\label{est:data2a}
 \| \rho^{0} \|^2  +\eps \| \nabla c^{0} \|^2   \le 2  e^{-T}.
 \end{equation} 
 is satisfied. }
\end{theorem}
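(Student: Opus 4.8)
The natural approach is a discrete energy estimate in the norm that appears in the small-data condition, namely $E^n := \|\rho^n\|^2 + \eps\|\nabla c^n\|^2$. First I would test the density equation \eqref{eq:rho} against $\rho^{n+1}$ and the chemical equation \eqref{eq:c} against $-\Delta c^{n+1}$, integrate over $\Omega$ (the homogeneous Dirichlet boundary condition kills every boundary term in the integrations by parts), and rewrite the time differences with the elementary identity $2(a-b)a = a^2-b^2+(a-b)^2$. Adding the two identities and estimating the source term by $-\int_\Omega \rho^n\Delta c^{n+1}\,dx \le \tfrac12\|\rho^n\|^2 + \tfrac12\|\Delta c^{n+1}\|^2$, one is led to an inequality of the form
\[ \frac{E^{n+1}-E^n}{2\Delta t} + \|\nabla\rho^{n+1}\|^2 + \tfrac12\|\Delta c^{n+1}\|^2 + (\text{nonnegative difference terms}) \;\le\; \tfrac12\|\rho^n\|^2 + \int_\Omega \rho^{n+1}\,\nabla\rho^{n+1}\cdot\nabla c^{n+1}\,dx, \]
in which the two dissipation terms $\|\nabla\rho^{n+1}\|^2$ and $\|\Delta c^{n+1}\|^2$ are available to absorb the nonlinearity. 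The point of testing the $c$-equation with $-\Delta c^{n+1}$, rather than with $c^{n+1}$ or with a time difference, is precisely to produce the $\|\nabla c^n\|^2$ contribution to $E^n$ and the $\|\Delta c^{n+1}\|^2$ dissipation while keeping everything uniform in $\eps$ (no factor $\eps/\Delta t$ survives).

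The only genuinely nonlinear quantity is the cross term, which I would rewrite as $\int_\Omega \rho^{n+1}\nabla\rho^{n+1}\cdot\nabla c^{n+1}\,dx = -\tfrac12\int_\Omega (\rho^{n+1})^2\Delta c^{n+1}\,dx$, bound by $\tfrac12\|\rho^{n+1}\|_{L^4}^2\|\Delta c^{n+1}\|$, and then invoke the two-dimensional Gagliardo--Nirenberg (Ladyzhenskaya) inequality $\|\rho^{n+1}\|_{L^4}^2 \le C_{\mathrm{GN}}\|\rho^{n+1}\|\,\|\nabla\rho^{n+1}\|$. A weighted Young inequality then distributes the product $\|\rho^{n+1}\|\,\|\nabla\rho^{n+1}\|\,\|\Delta c^{n+1}\|$ between the two dissipation terms at the cost of a prefactor proportional to $\|\rho^{n+1}\|^2$. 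Consequently, as long as $\|\rho^{n+1}\|^2$ stays below the fixed threshold dictated by $C_{\mathrm{GN}}$, the entire cross term is absorbed and one is left with $E^{n+1}-E^n \le \Delta t\,\|\rho^n\|^2 \le \Delta t\,E^n$, i.e. $E^{n+1}\le(1+\Delta t)E^n$. This subcriticality requirement on $\|\rho^{n+1}\|^2$ is the quantitative content of the hypothesis \eqref{est:data2a}: the constant $2$ is a convenient value for that threshold, and the weight $e^{-T}$ is chosen so that the bound one ends up proving is self-consistent with the smallness the proof consumes.

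The argument is closed by induction on $n$. Under the induction hypothesis that $E^k\le 2$ for all $k\le n$ --- so that $\|\rho^k\|$ is subcritical and the absorption above is legitimate, the role of the a priori control \eqref{est:noblowup} being to keep the discrete $H^1$ norm $\|\nabla\rho^{n+1}\|$, hence the Gagliardo--Nirenberg factor, bounded along the step so that passing from $E^n\le 2$ to $E^{n+1}$ does not overshoot the threshold --- the one-step estimate iterates to $E^{n+1}\le(1+\Delta t)^{n+1}E^0\le e^{(n+1)\Delta t}E^0\le e^{T}E^0\le e^{T}\cdot 2e^{-T}=2$, which continues the induction; in fact $E^n\le 2e^{t^n-T}$ for every $n$ with $t^n\le T$, which is the asserted stability. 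I expect the genuine difficulty to be this closing of the bootstrap: one must calibrate the Gagliardo--Nirenberg constant, the technical bound \eqref{est:noblowup}, and the numbers $2$ and $e^{-T}$ against one another so that the nonlinear smallness the energy estimate requires at level $n+1$ is exactly what the propagated bound delivers. Establishing the two energy identities and carrying out the Young/Gagliardo--Nirenberg estimates is otherwise routine; a variational alternative based on propagating the discrete free energy $\mathcal F(\rho^n,c^n)$ is conceivable, but since the claimed bound is in the plain $L^2\times\dot H^1$ norm the direct energy method above is the natural route.
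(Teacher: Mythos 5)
Your energy set-up coincides with the paper's: test \eqref{eq:rho} with $\rho^{n+1}$ and \eqref{eq:c} with $-\Delta c^{n+1}$, rewrite the cross term as $-\tfrac12\langle(\rho^{n+1})^2,\Delta c^{n+1}\rangle$, estimate it by Young plus the Ladyzhenskaya inequality, absorb the result into the dissipation $\Delta t\|\nabla\rho^{n+1}\|^2$ provided $1-\tfrac12\|\rho^{n+1}\|^2>0$, and then iterate the $(1+\Delta t)$ growth and induct. Up to this point you and the paper are doing the same thing.

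The one step that, as you describe it, would not go through is the closure of the bootstrap: why the smallness $1-\tfrac12\|\rho^{n+1}\|^2>0$ holds at the \emph{new}, implicit level, when the induction hypothesis only controls level $n$. Your proposed mechanism --- that \eqref{est:noblowup} keeps $\|\nabla\rho^{n+1}\|$, hence the Gagliardo--Nirenberg factor, bounded ``so the step does not overshoot the threshold'' --- is not a proof: a bound on $\Delta t\|\nabla\rho^{n+1}\|$ does not by itself limit how far $\|\rho^{n+1}\|$ can exceed $\|\rho^n\|$ in a single step. The paper closes the loop by pure algebra on the one-step inequality \eqref{est:add}, which is valid regardless of the sign of the coefficient $1-\tfrac12\|\rho^{n+1}\|^2$. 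It propagates the sharpened induction hypothesis $\tfrac12\|\rho^{n}\|^2+\tfrac\eps2\|\nabla c^{n}\|^2\le e^{n\Delta t-T}$ (this is where the weight $e^{-T}$ in \eqref{est:data2a} is consumed), so the right-hand side of \eqref{est:add} is at most $e^{(n+1)\Delta t-T}\le 1$; setting $b^{n+1}=\Delta t\|\nabla\rho^{n+1}\|^2$ this reads $\tfrac12\|\rho^{n+1}\|^2+b^{n+1}\bigl(1-\tfrac12\|\rho^{n+1}\|^2\bigr)\le 1$, i.e. $\bigl(1-b^{n+1}\bigr)\tfrac12\|\rho^{n+1}\|^2\le 1-b^{n+1}$, and since the technical condition \eqref{est:noblowup} is invoked exactly here to give $b^{n+1}<1$, one concludes $\tfrac12\|\rho^{n+1}\|^2\le 1$. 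This both legitimizes the absorption a posteriori and lets \eqref{est:add} deliver $\tfrac12\|\rho^{n+1}\|^2+\tfrac\eps2\|\nabla c^{n+1}\|^2\le e^{(n+1)\Delta t-T}$, continuing the induction. In particular the constants $2$ and $e^{-T}$ are not merely convenient normalizations: they are calibrated so that the right-hand side of \eqref{est:add} never exceeds $1$, which is what makes this algebraic argument work. With that replacement for your overshoot heuristic, your proof is the paper's proof.
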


\begin{proof}
Multiply equation \eqref{eq:rho} by $\rho^{n+1} \Delta t$ and integrate with respect to $x$, we get
\begin{multline}\label{eq:rho2}
\frac 1 2 \| \rho^{n+1} \|^2 + \frac 1 2 \| \rho^{n+1}-\rho^n \|^2 - \frac 1 2 \| \rho^n \|^2 + \Delta t \| \nabla \rho^{n+1} \|^2 = -\frac{\Delta t} {2} \left\langle \left(\rho^{n+1}\right)^2, \Delta c^{n+1} \right\rangle\, ,
\end{multline}
where the last term on the left is obtained using integration by parts. Apply the Young's inequatlity, the right hand side of this equation has the following estimate
\[
-\frac {\Delta t} 2 \left\langle \left(\rho^{n+1}\right)^2, \Delta c^{n+1} \right\rangle \le \frac{\Delta t} {4} \| (\rho^{n+1})^2 \|^2 + \frac{\Delta t} {4} \| \Delta c^{n+1} \|^2 .
\]
Next, we multiply equation \eqref{eq:c} by $-\Delta c^{n+1}$ and integrate against $x$. Again with integration by parts, we obtain
\begin{multline} \label{eq:c2}
\frac \eps 2 \| \nabla c^{n+1} \|^2 + \frac \eps 2 \| \nabla c^{n+1}-\nabla c^n \|^2 -  \frac \eps 2 \| \nabla c^n \|^2 + \Delta t \| \Delta c^{n+1}\|^2  
= - \Delta t \left\langle \rho^{n}, \Delta c^{n+1} \right\rangle.
\end{multline}
And the Young's inequality implies
\[
 - \Delta t \left\langle \rho^{n}, \Delta c^{n+1} \right\rangle \le \frac {\Delta t} 2 \| \rho^n \|^2 
+\frac{\Delta t} 2 \| \Delta c^{n+1} \|^2.
\] 
A combination of equation \eqref{eq:rho2} and \eqref{eq:c2} then leads to
\begin{multline} 
\frac 1 2 \| \rho^{n+1} \|^2  +\frac \eps 2 \| \nabla c^{n+1} \|^2 +  \Delta t \| \nabla \rho^{n+1} \|^2 +\frac{\Delta t}{4} \| \Delta c^{n+1}\|^2 
\\
+ \frac 1 2 \| \rho^{n+1}-\rho^n \|^2 + \frac \eps 2 \| \nabla c^{n+1}-\nabla c^n \|^2 
 \\
\le \frac 1 2 (1+\Delta t)\| \rho^n \|^2  + \frac \eps 2 \| \nabla c^n \|^2 +\frac{\Delta t} {4} \| (\rho^{n+1})^2 \|^2. 
\end{multline}
To estimate the nonlinear term $\| (\rho^{n+1})^2 \|^2$ in the two dimensional case, we apply the \emph{Ladyzhenskaya} inequality and get
\[
\| (\rho^{n+1})^2 \|^2 \le 2\| \rho^{n+1}\|^2 \| \nabla \rho^{n+1}\|^2.
 \] 
 Hence we arrive at the following estimate
 \begin{multline}\label{est:add}
\frac 1 2 \| \rho^{n+1} \|^2  +\frac \eps 2 \| \nabla c^{n+1} \|^2 +  \Delta t \left(1-\frac 1 2 \| \rho^{n+1}\|^2 \right) \| \nabla \rho^{n+1} \|^2 
\\
+ \frac {\Delta t}{4} \|\Delta c^{n+1} \|^2 + \frac 1 2 \| \rho^{n+1}-\rho^n \|^2 + \frac \eps 2 \| \nabla c^{n+1}-\nabla c^n \|^2 
 \\
\le \frac 1 2 (1+\Delta t)\| \rho^n \|^2  + \frac \eps 2 \| \nabla c^n \|^2.  
\end{multline}
Thus, if the following condition is satisfied,
\begin{equation} \label{est:data}
1-\frac 1 2 \| \rho^{n+1}\|^2 >0,
\end{equation}
we conclude that
\begin{equation}
 \| \rho^{n+1} \|^2  +\eps \| \nabla c^{n+1} \|^2  \le  (1+\Delta t)\| \rho^n \|^2  + \eps  \| \nabla c^n \|^2.
\end{equation}
The by Gronwall's inequality, if $n \Delta t \le T$, we have
\[
 \| \rho^{n} \|^2  +\eps \| \nabla c^{n} \|^2 \le  e^T \left( \| \rho^{0} \|^2  +\eps \| \nabla c^{0} \|^2  \right). 
 \] 
{We propose that, the presumed condition \eqref{est:data} and the stability estimate require the following small data condition
 \begin{equation}\label{est:data20}
 e^T \left( \| \rho^{0} \|^2  +\eps \| \nabla c^{0} \|^2  \right) \le 2.
 \end{equation}
 Actually, this can be shown by induction. Suppose that, we have shown
  \begin{equation}\label{est:n}
 \frac 1 2 \| \rho^n \|^2  + \frac \eps 2 \| \nabla c^n \|^2 \le e^{n \Delta t -T}
 \end{equation}
 and $(n+1)\Delta t \le T$, then clearly,
 \[
 \frac 1 2 (1+\Delta t)\| \rho^n \|^2  + \frac \eps 2 \| \nabla c^n \|^2 \le e^{(n+1) \Delta t -T} \le 1.
 \]
 If we denote $b^{n+1}= \Delta t \| \nabla \rho^{n+1}\|^2 $, then \eqref{est:add} implies
 \[
 \frac 1 2 \| \rho^{n+1} \|^2 +  b^{n+1} \left(1-\frac 1 2 \| \rho^{n+1}\|^2 \right)  \le 1.
 \]
 Since $b^{n+1}< 1$ due to the technical condition \eqref{est:noblowup}, we conclude that
 \[
 \frac 1 2 \| \rho^{n+1}\|^2  <1,
 \]
 and by \eqref{est:n},  \eqref{est:add} implies 
 \[
 \frac 1 2 \| \rho^{n+1} \|^2  + \frac \eps 2 \| \nabla c^{n+1} \|^2 \le e^{(n+1) \Delta t -T}.
 \]
 This completes the proof.}
 \end{proof}
We end this part with a comment on the asymptotic preserving properties. As $\eps \rightarrow 0$, the scheme {for} the parabolic-parabolic system not only converges to the one for the parabolic-elliptic system, {but also keeps the stability constraint satisfied for fixed $\Delta t$}, as seen from (\ref{est:data20}). This formally justifies that the semi-discrete numerical method \eqref{eq:rho} and \eqref{eq:c} is asymptotically preserving.

\subsection{A conservative and positivity preserving fully discrete scheme}
In this section, we explore in detail the spatial discretizations of  Keller-Segel equations. Note that, naive discretizations of equation \eqref{eq:rho} can easily destroy the positivity of the solution and trigger the instability. Our main idea is to make use of the symmetric formulation of (\ref{eq:sym1}) to design a scheme that guarantees the positivity. 

More specifically, let $M^{n+1} = e^{c^{n+1}}$, and rewrite (\ref{eq:rho}) as
\begin{equation} \label{eqn:1}
\frac{\rho^{n+1}-\rho^n}{\Delta t}=\nabla \cdot \left(M^{n+1} \nabla\left( \frac{\rho^{n+1}}{M^{n+1}} \right) \right),
\end{equation}
where the right hand side is in the form of the Fokker-Planck operator and can be discretized symmetrically \cite{JW11}. In particular, we denote $h^{n+1}=\frac{\rho^{n+1}}{\sqrt{M^{n+1}}}$, and reformulate \eqref{eqn:1} into
\begin{equation}\label{eq:h}
h^{n+1}- \frac{\Delta t}{\sqrt{M^{n+1}}}\nabla \cdot \left( M^{n+1} \nabla \frac{ h^{n+1}}{\sqrt{M^{n+1}}} \right) = \frac{\rho^n}{\sqrt{M^{n+1}}}.
\end{equation}
Such scheme has been shown to preserve positivity. Indeed, since the left hand side is a positive definite operator on $h^{n+1}$, and the right hand side is positive, as long as the spatial discretization preserves the positive definiteness, we can ensure the positivity of $h^{n+1}$.

A fully discrete scheme is in order. Let the computational domain be $[a,b]\times [c,d]$, and we consider uniform spatial mesh with mesh size $\Delta x$ and $\Delta y$. Thus the mesh grid points are $(x_i,y_j)=(a+i \Delta x,c+j \Delta y)$. We apply the following five-point method for spatial decretization { to equation \eqref{eq:h} and \eqref{eq:c}, and get}
\begin{equation}\label{eq:dc}
 \frac{\eps}{\Delta t}  c^{n+1}_{i,j} -D^{n+1}_{i,j} =   \frac{\eps}{\Delta t} c^n_{i,j} +\rho^n_{i,j},
\end{equation}
\begin{equation}\label{eq:dh}
h^{n+1}_{i,j} - \Delta t S^{n+1}_{i,j} = \frac{\rho^n_{i,j}}{\sqrt{M^{n+1}_{i,j}}}.
\end{equation} 
Here, 
\[
D^{n+1}_{i,j} = \frac{1}{\Delta x^2} \left( c^{n+1}_{i-1,j} -2  c^{n+1}_{i,j}+ c^{n+1}_{i+1,j} \right)+\frac{1}{\Delta y^2} \left( c^{n+1}_{i,j-1} -2  c^{n+1}_{i,j}+ c^{n+1}_{i,j+1} \right),
\]
\begin{align*}
S^{n+1}_{ij} = \frac{1}{\Delta x^2 \sqrt{M^{n+1}_{i,j}}}  \sqrt{M^{n+1}_{i+1,j}M^{n+1}_{i,j}} \left( \frac{h^{n+1}_{i+1,j}}{\sqrt{M^{n+1}_{i+1,j}}}-\frac{h^{n+1}_{i,j}}{\sqrt{M^{n+1}_{i,j}}} \right)  \\
-\frac{1}{\Delta x^2 \sqrt{M^{n+1}_{ij}}}   \sqrt{M^{n+1}_{i,j}M^{n+1}_{i-1,j}} \left( \frac{h^{n+1}_{i,j}}{\sqrt{M^{n+1}_{i,j}}}-\frac{h^{n+1}_{i-1,j}}{\sqrt{M^{n+1}_{i-1,j}}} \right)   \\
+\frac{1}{\Delta y^2 \sqrt{M^{n+1}_{i,j}}}  \sqrt{M^{n+1}_{i,j+1}M^{n+1}_{i,j}} \left( \frac{h^{n+1}_{i,j+1}}{\sqrt{M^{n+1}_{i,j+1}}}-\frac{h^{n+1}_{i,j}}{\sqrt{M^{n+1}_{i,j}}} \right) \\
-\frac{1}{\Delta y^2 \sqrt{M^{n+1}_{i,j}}}  \sqrt{M^{n+1}_{i,j}M^{n+1}_{i,j-1}} \left( \frac{h^{n+1}_{i,j}}{\sqrt{M^{n+1}_{i,j}}}-\frac{h^{n+1}_{i,j-1}}{\sqrt{M^{n+1}_{i,j-1}}} \right). 
\end{align*}

When $\Delta x=\Delta y$, we can simplify the above expression to 
\[
D^{n+1}_{i,j} = \frac{1}{\Delta x^2} \left( c^{n+1}_{i-1,j} + c^{n+1}_{i+1,j} + c^{n+1}_{i,j-1}+ c^{n+1}_{i,j+1}  -4  c^{n+1}_{i,j}\right),
\]
\[
S^{n+1}_{ij} = \frac{1}{\Delta x^2} \left( h^{n+1}_{i-1,j} + h^{n+1}_{i+1,j} + h^{n+1}_{i,j-1}+ h^{n+1}_{i,j+1}  -\frac{\sum_{d_1=\pm 1,d_2=\pm 1}\sqrt{M^{n+1}_{i+d_1,j+d_2}}}{\sqrt{M^{n+1}_{i,j}}}  h^{n+1}_{i,j}\right).
\]
In the end, $\rho^{n+1}_{i,j}$ is easily obtained via
\[
\rho^{n+1}_{i,j}=h^{n+1}_{i,j} \sqrt{M^{n+1}_{i,j}}.
 \] 
 Multiply \eqref{eq:dh} by $\sqrt{M^{n+1}_{i,j}}$ and sum over $(i,j)$, we get
 \[
 \sum_{i,j} \rho^{n+1}_{i,j} - \Delta t \sum_{i,j} \sqrt{M^{n+1}_{i,j}} S^{n+1}_{i,j} = \sum_{i,j} \rho^{n}_{i,j}. 
 \]
Notice that
 \begin{multline*}
 \sum_{i,j} \sqrt{M^{n+1}_{i,j}} S^{n+1}_{i,j}  
  =  \sum_{i,j}  \frac{1}{\Delta x^2} \left( \sqrt{M^{n+1}_{i,j} }h^{n+1}_{i+1,j}- \left(\sqrt{M^{n+1}_{i+1,j} }+\sqrt{M^{n+1}_{i-1,j} } \right)h^{n+1}_{i,j}  +\sqrt{M^{n+1}_{i,j} }h^{n+1}_{i-1,j} \right) \\
   +  \sum_{i,j}  \frac{1}{\Delta y^2} \left( \sqrt{M^{n+1}_{i,j} }h^{n+1}_{i,j+1}- \left(\sqrt{M^{n+1}_{i,j+1} }+\sqrt{M^{n+1}_{i,j-1} } \right)h^{n+1}_{i,j}  +\sqrt{M^{n+1}_{i,j} }h^{n+1}_{i,j-1} \right) =0\, ,
 \end{multline*}
 which implies the conservation of mass in the discrete level, i.e., 
  \[
 \sum_{i,j} \rho^{n+1}_{i,j}= \sum_{i,j} \rho^{n}_{i,j}. 
 \]
For positivity, we have the following result. 
\begin{theorem}
{Suppose initially we have $\rho^k_{i,j} \ge 0$ for $k=0$, then the five point scheme \eqref{eq:dc} and \eqref{eq:dh} guarantees 
\[
\rho^n_{i,j} \ge 0 , \quad \text{ for}\,\, \,\,n\ge 1.
\] }
\end{theorem}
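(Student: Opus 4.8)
The plan is to argue by induction on $n$, with the base case $n=0$ being exactly the hypothesis $\rho^0_{i,j}\ge 0$. For the inductive step assume $\rho^n_{i,j}\ge 0$. First I would record that the two linear systems are genuinely solvable: the matrix of \eqref{eq:dc} is $\tfrac{\eps}{\Delta t}I-D^{n+1}$, which is symmetric positive definite (with homogeneous Dirichlet data $-D^{n+1}$ is already SPD, and for $\eps>0$ one only adds a positive diagonal; the same matrix works at $\eps=0$ as the discrete Dirichlet Poisson operator). Hence $c^{n+1}$ is well defined and $M^{n+1}_{i,j}=e^{c^{n+1}_{i,j}}>0$ for every $(i,j)$, irrespective of any sign. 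Since $\rho^{n+1}_{i,j}=h^{n+1}_{i,j}\sqrt{M^{n+1}_{i,j}}$ with $\sqrt{M^{n+1}_{i,j}}>0$, it suffices to prove $h^{n+1}_{i,j}\ge 0$, equivalently that $g_{i,j}:=\rho^{n+1}_{i,j}/M^{n+1}_{i,j}=h^{n+1}_{i,j}/\sqrt{M^{n+1}_{i,j}}\ge 0$ for all interior nodes.

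The key step is to rewrite the $\rho$-update in the variable $g$. Multiplying \eqref{eq:dh} by $\sqrt{M^{n+1}_{i,j}}$ and substituting $h^{n+1}=\sqrt{M^{n+1}}\,g$, the symmetrized operator $S^{n+1}$ collapses (by the same telescoping already used for the mass-conservation identity, now read locally) to
\[
M^{n+1}_{i,j}\,g_{i,j}+\frac{\Delta t}{\Delta x^2}\sum_{(i',j')\sim(i,j)}\sqrt{M^{n+1}_{i,j}M^{n+1}_{i',j'}}\,\bigl(g_{i,j}-g_{i',j'}\bigr)=\rho^n_{i,j},
\]
the sum being over the four grid neighbors (with the obvious two-scale modification if $\Delta x\ne\Delta y$). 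Setting $\gamma_{(i,j),(i',j')}:=\tfrac{\Delta t}{\Delta x^2}\sqrt{M^{n+1}_{i,j}M^{n+1}_{i',j'}}\ge 0$ and $\beta_{i,j}:=M^{n+1}_{i,j}+\sum_{(i',j')\sim(i,j)}\gamma_{(i,j),(i',j')}$, this is
\[
g_{i,j}=\frac{1}{\beta_{i,j}}\Bigl(\rho^n_{i,j}+\sum_{(i',j')\sim(i,j)}\gamma_{(i,j),(i',j')}\,g_{i',j'}\Bigr),
\]
and the crucial observation is the \emph{strict} inequality $\beta_{i,j}>\sum_{(i',j')\sim(i,j)}\gamma_{(i,j),(i',j')}$, forced by the extra positive term $M^{n+1}_{i,j}$ coming from the discrete time derivative. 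Equivalently, the linear system for $g$ has matrix $\mathrm{diag}(M^{n+1})+\Delta t\,\widetilde{\mathcal L}$, where $\widetilde{\mathcal L}$ is the symmetric positive-semidefinite weighted graph Laplacian with edge weights $\sqrt{M^{n+1}_{i,j}M^{n+1}_{i',j'}}/\Delta x^2$; this is a Stieltjes matrix, so it is invertible and its inverse is entrywise nonnegative, which already yields $g\ge 0$.

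To keep things self-contained I would instead run a discrete minimum principle on the displayed identity. Suppose, for contradiction, $m:=\min_{(i,j)\,\text{interior}}g_{i,j}<0$, attained at an interior node $(i_0,j_0)$; the homogeneous Dirichlet condition gives $g=0$ at boundary nodes, so $m<0=g$ there. Using $\rho^n_{i_0,j_0}\ge 0$, $\gamma\ge 0$, and $g_{i',j'}\ge m$ at every neighbor of $(i_0,j_0)$ (interior ones by minimality, boundary ones since their value is $0>m$), the identity at $(i_0,j_0)$ gives $\beta_{i_0,j_0}\,m\ge m\sum_{(i',j')\sim(i_0,j_0)}\gamma_{(i_0,j_0),(i',j')}$; dividing by $m<0$ forces $\beta_{i_0,j_0}\le\sum_{(i',j')\sim(i_0,j_0)}\gamma_{(i_0,j_0),(i',j')}$, contradicting strict dominance. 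Hence $g_{i,j}\ge 0$, so $\rho^{n+1}_{i,j}=M^{n+1}_{i,j}g_{i,j}\ge 0$, closing the induction.

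I do not expect a genuine obstacle: the reduction of $S^{n+1}$ to the weighted graph Laplacian in the $g$ variable is routine algebra, and the minimum principle is standard once strict diagonal dominance is available. The points that deserve a little care are (i) identifying that the right unknown is $g=\rho/M$ rather than $h$, which is what makes the strict dominance — and hence the absence of any CFL restriction on $\Delta t/\Delta x^2$ for positivity — manifest from the time-derivative term; (ii) checking that the sign pattern (nonnegative off-diagonal weights, strictly dominant diagonal) survives the anisotropic mesh $\Delta x\ne\Delta y$ and the truncated near-boundary stencils; and (iii) confirming the solvability of \eqref{eq:dc} and of the $g$-system uniformly in $\eps\ge 0$, which the SPD/Stieltjes structure provides.
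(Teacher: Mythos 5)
Your proof is correct and follows essentially the route the paper intends: the paper omits the details here (deferring to \cite{JW11}), but the argument it does spell out for the radially symmetric scheme is exactly your discrete minimum principle applied to $\rho^{n+1}/M^{n+1}$, which you reproduce in Cartesian coordinates together with the (standard) M-matrix/strict diagonal dominance observation guaranteeing solvability. No gaps; the only caveat is cosmetic, namely that the boundary treatment (Dirichlet vs.\ the periodic conditions used in the numerics) changes nothing in the dominance argument, as you note.
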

The proof is standard and is similar to some existing results, the readers may consult \cite{JW11} for details.

To conclude the discussions on the first order scheme, we would like to give the following remarks,
\begin{enumerate}
\item Given that $c^k_{i,j} \ge 0$ for $k=0,1$ and  appropriate boundary conditions for $c^\eps$,  we can show the positivity of $c^n_{i,j}$ $\forall n\in \N^+$, $\forall i,j$.

\item Other spatial discretization may apply to this semi-discrete system. Especially, the $c^\eps$ equation can easily be solved by pseudo-spectral method. It is worth emphasizing that the positivity of $\rho^n_{i,j}$ is independent of the positivity of  $c^n_{i,j}$. Hence, one has more freedom to solve the $c$ equation.

\item This scheme can be easily extended to multi-species models, as will be shown in Section~\ref{sec:numerics}.
\end{enumerate}


\subsection{A second order scheme} 
The scheme presented above can be directly extended to second order. As the spatial discretization builded upon the center difference is already second order accurate, we just focus on the second order time discretization, which can be accomplished using the backward difference formula (BDF). Specifically, the semi-discrete scheme reads
\begin{align} 
\frac{1}{\Delta t}\left(\frac 3 2\rho^{n+1}-2 \rho^n + \frac 1 2 \rho^{n-1} \right) =\Delta \rho^{n+1} - \nabla \cdot (\rho^{n+1} \nabla c^{n+1}) \label{eq:rho3}
\\ 
\frac{\eps}{\Delta t}\left(\frac 3 2 c^{n+1}-2 c^n + \frac 1 2 c^{n-1} \right) = \Delta c^{n+1}+ 2\rho^n-\rho^{n-1}. \label{eq:c3}
\end{align}
Again, as in the first order scheme, no nonlinear solver is needed: one can solve for $c^{n+1}$ from \eqref{eq:c3} and then $\rho^{n+1}$ from (\ref{eq:rho3}).

A similar stability result is available. 
\begin{theorem}
{Given a final time $T$, then for $n\Delta t \le T$, assume the numerical solution obtained by the second order semi-discrete numerical method \eqref{eq:rho3} and \eqref{eq:c3} for the Keller-Segel equations  satisfies the following technical condition
\begin{equation}\label{est:noblowup2}
\Delta t \| \nabla \rho^n \| \le 1, \quad \forall n \ge 0. 
\end{equation}
Then, the method is stable if the small data condition
 \begin{equation} \label{est:data2b}
\frac 1 4 \| \rho^1 \|^2 +\frac \eps 4 \| \nabla c^1 \|^2+ \frac 1 4   \| 2\rho^{1}-\rho^{0} \|^2+\frac \eps 4  \| 2\nabla c^{1}-\nabla c^{0} \|^2 + \Delta t \|\rho^{0} \|^2   \le \frac{1}{2} {e^{-20T}}
 \end{equation} 
 is satisfied.} 
\end{theorem}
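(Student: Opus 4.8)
The plan is to run the energy argument of the first-order case, replacing the one-step polarization identity by the BDF2 polarization identity: for any sequence $(a^k)$,
\[
\langle \tfrac32 a^{n+1}-2a^n+\tfrac12 a^{n-1},\,a^{n+1}\rangle = G(a^{n+1},a^n)-G(a^n,a^{n-1})+\tfrac14\|a^{n+1}-2a^n+a^{n-1}\|^2 ,
\]
where $G(u,v):=\tfrac14\|u\|^2+\tfrac14\|2u-v\|^2\ge0$. Accordingly I would track the composite discrete energy
\[
\mathcal E^n:=G(\rho^n,\rho^{n-1})+\eps\,G(\nabla c^n,\nabla c^{n-1})=\tfrac14\|\rho^n\|^2+\tfrac14\|2\rho^n-\rho^{n-1}\|^2+\tfrac\eps4\|\nabla c^n\|^2+\tfrac\eps4\|2\nabla c^n-\nabla c^{n-1}\|^2 ,
\]
so that the left-hand side of \eqref{est:data2b} is exactly $\mathcal E^1+\Delta t\|\rho^0\|^2$, which is the quantity the induction below will propagate.

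First I would test \eqref{eq:rho3} by $\rho^{n+1}\Delta t$ and \eqref{eq:c3} by $-\Delta c^{n+1}\Delta t$, integrating by parts in each (the boundary terms vanish by the homogeneous Dirichlet condition). The BDF2 identity, applied with $a=\rho$ and then with $a=\nabla c$, turns the time-difference terms into $\mathcal E^{n+1}-\mathcal E^n$ plus nonnegative remainders; the second-order terms produce the dissipation $\Delta t\|\nabla\rho^{n+1}\|^2+\Delta t\|\Delta c^{n+1}\|^2$; the drift term produces $-\tfrac{\Delta t}{2}\langle(\rho^{n+1})^2,\Delta c^{n+1}\rangle$ exactly as in \eqref{eq:rho2}; and the source $2\rho^n-\rho^{n-1}$ in \eqref{eq:c3} produces $-\Delta t\langle 2\rho^n-\rho^{n-1},\Delta c^{n+1}\rangle$. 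Young's inequality bounds the two cross terms by $\tfrac{\Delta t}{4}\|(\rho^{n+1})^2\|^2+\tfrac34\Delta t\|\Delta c^{n+1}\|^2+\tfrac{\Delta t}{2}\|2\rho^n-\rho^{n-1}\|^2$, so the $\|\Delta c^{n+1}\|^2$ contribution is absorbed by $\Delta t\|\Delta c^{n+1}\|^2$ with a nonnegative surplus, and the \emph{Ladyzhenskaya} inequality $\|(\rho^{n+1})^2\|^2\le2\|\rho^{n+1}\|^2\|\nabla\rho^{n+1}\|^2$ turns the nonlinear term into $\tfrac{\Delta t}{2}\|\rho^{n+1}\|^2\|\nabla\rho^{n+1}\|^2$, which combines with the $\nabla\rho$ dissipation. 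Dropping the surviving nonnegative terms, this gives the BDF2 analogue of \eqref{est:add},
\[
\mathcal E^{n+1}+\Delta t\Big(1-\tfrac12\|\rho^{n+1}\|^2\Big)\|\nabla\rho^{n+1}\|^2\;\le\;\mathcal E^n+\tfrac{\Delta t}{2}\|2\rho^n-\rho^{n-1}\|^2 .
\]

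I would then close the proof by induction on $n\ge1$, with \eqref{est:data2b} as the base case, simultaneously maintaining $\tfrac12\|\rho^n\|^2<1$ and $\mathcal E^n+\Delta t\|\rho^{n-1}\|^2\le\tfrac12 e^{20(n\Delta t-T)}$. Granting these at level $n$ with $(n+1)\Delta t\le T$: since $\|2\rho^n-\rho^{n-1}\|^2\le8\|\rho^n\|^2+2\|\rho^{n-1}\|^2$ and $\|\rho^n\|^2\le4\mathcal E^n$, the right-hand side above is at most $(1+16\Delta t)\mathcal E^n+\Delta t\|\rho^{n-1}\|^2\le(1+16\Delta t)\big(\mathcal E^n+\Delta t\|\rho^{n-1}\|^2\big)\le\tfrac12$; since $\tfrac14\|\rho^{n+1}\|^2\le\mathcal E^{n+1}$, writing $y=\tfrac12\|\rho^{n+1}\|^2$ and $b=2\Delta t\|\nabla\rho^{n+1}\|^2$ the displayed inequality yields $(1-y)(1-b)\ge0$, and since \eqref{est:noblowup2} forces $b<1$ (the same use as \eqref{est:noblowup} in the first-order proof) we get $y<1$, i.e. $\tfrac12\|\rho^{n+1}\|^2<1$. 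Then the parenthesis is nonnegative and $\mathcal E^{n+1}\le\mathcal E^n+\tfrac{\Delta t}{2}\|2\rho^n-\rho^{n-1}\|^2$, so adding $\Delta t\|\rho^n\|^2\le4\Delta t\mathcal E^n$ gives $\mathcal E^{n+1}+\Delta t\|\rho^n\|^2\le(1+20\Delta t)\big(\mathcal E^n+\Delta t\|\rho^{n-1}\|^2\big)\le\tfrac12 e^{20(n+1)\Delta t-20T}$, which closes the induction (the $20$ is $16+4$: $16$ from $\tfrac{\Delta t}{2}\cdot8\|\rho^n\|^2\le16\Delta t\mathcal E^n$ and $4$ from $\Delta t\|\rho^n\|^2\le4\Delta t\mathcal E^n$).

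The main obstacle is not any single estimate but getting the induction self-consistent for a two-step scheme: the BDF2 energy couples levels $n+1$ and $n$ through the combination $2\rho^{n+1}-\rho^n$, so one must append $\Delta t\|\rho^{n-1}\|^2$ to the energy (this is what produces the $\Delta t\|\rho^0\|^2$ term in \eqref{est:data2b}) in order to reabsorb $\tfrac{\Delta t}{2}\|2\rho^n-\rho^{n-1}\|^2$ at the previous step, while at the same time the isolated $\tfrac14\|\rho^{n+1}\|^2$ on the left must remain strong enough, against the surviving $\nabla\rho$ dissipation, to extract the pointwise-in-time bound $\|\rho^{n+1}\|^2<2$ that neutralizes the quadratic nonlinearity --- which is exactly where \eqref{est:noblowup2} enters, in complete analogy with the first-order argument. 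The generous exponential constant $e^{20T}$ (versus $e^{T}$ in the first-order case) is simply the price paid for the crude handling of $2\rho^n-\rho^{n-1}$; no sharpness is claimed.
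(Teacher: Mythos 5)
Your proposal is correct, and its core is the same energy argument as the paper's: testing \eqref{eq:rho3} with $\rho^{n+1}\Delta t$ and \eqref{eq:c3} with $-\Delta c^{n+1}$, the BDF2 polarization identity (the paper's \eqref{eq:rho4} and \eqref{eq:c4}), Young's inequality with the same splitting of the $\|\Delta c^{n+1}\|^2$ terms, the Ladyzhenskaya inequality, and the same constant $20$ and threshold $\tfrac12 e^{-20T}$ on $C_0=\mathcal{E}^1+\Delta t\|\rho^0\|^2$. The difference is in how the Gronwall/bootstrap is organized: the paper sums the one-step inequality over $n$, keeps only $\tfrac14\|\rho^N\|^2+\tfrac{\eps}{4}\|\nabla c^N\|^2$ on the left, runs a discrete Gronwall on the partial sums, and then only asserts ("similar to the first order case") the induction that validates $1-\tfrac12\|\rho^{n+1}\|^2>0$; you instead propagate the per-step invariant $\mathcal{E}^n+\Delta t\|\rho^{n-1}\|^2\le\tfrac12 e^{20(n\Delta t-T)}$ carrying the full BDF2 energy, which makes that bootstrap closure explicit and correctly identifies the $\Delta t\|\rho^0\|^2$ term in \eqref{est:data2b} as the device needed to reabsorb $\tfrac{\Delta t}{2}\|2\rho^n-\rho^{n-1}\|^2$; this is a legitimate and arguably cleaner variant yielding the same conclusion. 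One inherited imprecision: your closure needs $b=2\Delta t\|\nabla\rho^{n+1}\|^2<1$, which does not literally follow from \eqref{est:noblowup2} as written (no square there); but the paper's first-order proof already uses \eqref{est:noblowup} in exactly this loose way, and its unwritten second-order induction would need the same factor, so this is a defect of the stated technical condition rather than of your argument. Likewise, $(1-y)(1-b)\ge0$ with $b<1$ gives $y\le1$ rather than $y<1$, which is all that is needed to drop the dissipation term and continue the induction.
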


\begin{proof}
Multiply equation \eqref{eq:rho3} by $\rho^{n+1} \Delta t$ and integrate with respect to $x$, by integration by parts, we get
\begin{multline}\label{eq:rho4}
\frac 1 4 \| \rho^{n+1} \|^2- \frac 1 4 \| \rho^n \|^2 + \frac 1 4 \| 2\rho^{n+1}-\rho^n \|^2  -\frac 1 4   \| 2\rho^{n}-\rho^{n-1} \|^2 + \frac 1 4 \| \rho^{n+1}-2 \rho^n +\rho^{n-1}\|^2  \\
 + \Delta t \| \nabla \rho^{n+1} \|^2 = -\frac{\Delta t} {2} \left\langle \left(\rho^{n+1}\right)^2, \Delta c^{n+1} \right\rangle.
\end{multline}
By the Young's inequality, the right hand can be estimated as
\[
-\frac 1 2 \left\langle \left(\rho^{n+1}\right)^2, \Delta c^{n+1} \right\rangle \le \frac{\Delta t} {4} \| (\rho^{n+1})^2 \|^2 + \frac{\Delta t} {4} \| \Delta c^{n+1} \|^2 .
\]
Again, by the \emph{Ladyzhenskaya} inequality, we get
\[
\| (\rho^{n+1})^2 \|^2 \le 2\| \rho^{n+1}\|^2 \| \nabla \rho^{n+1}\|^2.
 \] 
 we multiply equation \eqref{eq:c3} by $-\Delta c^{n+1}$ and integrate with respect to $x$. With integration by parts, we obtain that
\begin{multline} \label{eq:c4}
\frac \eps 4 \| \nabla c^{n+1} \|^2  -  \frac \eps 4 \| \nabla c^n \|^2  + \frac \eps 4  \| 2\nabla c^{n+1}-\nabla c^n \|^2 -  \frac \eps 4  \| 2\nabla c^{n}-\nabla c^{n-1} \|^2 \\ 
+ \frac \eps 4  \| \nabla c^{n+1}-2\nabla c^n+c^{n-1} \|^2 + \Delta t \| \Delta c^{n+1}\|^2 
= - \Delta t \left\langle 2\rho^{n} - \rho^{n-1}, \Delta c^{n+1} \right\rangle.
\end{multline}
And the Young's inequality implies,
\begin{align*}
 - \Delta t \left\langle 2\rho^{n}-\rho^{n-1}, \Delta c^{n+1} \right\rangle & \le \frac {\Delta t} 2 \| 2\rho^n -\rho^{n-1}\|^2 
+\frac{\Delta t} 2 \| \Delta c^{n+1} \|^2 \\
& \le {4\Delta t} \|\rho^n \|^2 + {\Delta t} \|\rho^{n-1} \|^2 +\frac{\Delta t} 2 \| \Delta c^{n+1} \|^2.
\end{align*}
Here we used the fact that $\| a +b\|^2 \le 2 \|a\|^2 +2 \| b\|^2$.
Adding equation \eqref{eq:rho4} and \eqref{eq:c4}, we get,
\begin{multline*}
\frac 1 4 \| \rho^{n+1} \|^2- \frac 1 4 \| \rho^n \|^2 +\frac \eps 4 \| \nabla c^{n+1} \|^2  -  \frac \eps 4 \| \nabla c^n \|^2 + \frac 1 4 \| 2\rho^{n+1}-\rho^n \|^2  -\frac 1 4   \| 2\rho^{n}-\rho^{n-1} \|^2 \\
 + \frac \eps 4  \| 2\nabla c^{n+1}-\nabla c^n \|^2 -  \frac \eps 4  \| 2\nabla c^{n}-\nabla c^{n-1} \|^2  + \frac 1 4 \| \rho^{n+1}-2 \rho^n +\rho^{n-1}\|^2+ \frac \eps 4  \| \nabla c^{n+1}-2\nabla c^n+c^{n-1} \|^2 \\
 \Delta t \left(1-\frac 1 2 \| \rho^{n+1}\|^2 \right) \| \nabla \rho^{n+1} \|^2   + \frac {\Delta t}{4} \|\Delta c^{n+1} \|^2  \le  {4\Delta t} \|\rho^n \|^2 + {\Delta t} \|\rho^{n-1} \|^2.
\end{multline*}
Assume that $\rho^{0}$ and $c^0$ are given by initial conditions, and $\rho^1$ and $c^1$ are computed by a first order numerical scheme. For $N\in \N^+$, $N\ge 2$, with $N\Delta t \le T$, we sum up the above equations for $n=1,\cdots, N-1$, and get
\begin{multline*}
\frac 1 4 \| \rho^{N} \|^2- \frac 1 4 \| \rho^1 \|^2 +\frac \eps 4 \| \nabla c^{N} \|^2  -  \frac \eps 4 \| \nabla c^1 \|^2 + \frac 1 4 \| 2\rho^{N}-\rho^{N-1} \|^2  -\frac 1 4   \| 2\rho^{1}-\rho^{0} \|^2 \\
 + \frac \eps 4  \| 2\nabla c^{N}-\nabla c^{N-1} \|^2 -  \frac \eps 4  \| 2\nabla c^{1}-\nabla c^{0} \|^2   + \sum_{n=1}^{N-1} \frac 1 4 \| \rho^{n+1}-2 \rho^n +\rho^{n-1}\|^2 \\
 + \sum_{n=1}^{N-1}  \frac \eps 4  \| \nabla c^{n+1}-2\nabla c^n+c^{n-1} \|^2   + \sum_{n=1}^{N-1}  \Delta t \left(1-\frac 1 2 \| \rho^{n+1}\|^2 \right) \| \nabla \rho^{n+1} \|^2 \\
  + \sum_{n=1}^{N-1} \frac {\Delta t}{4} \|\Delta c^{n+1} \|^2 \le   {4\Delta t} \|\rho^{N-1} \|^2  + \sum_{n=1}^{N-2}  {5 \Delta t} \|\rho^n \|^2 + {\Delta t}\|\rho^{0} \|^2.
\end{multline*}   
Therefore, if the following condition holds: 
\begin{equation} \label{est:data2}
1-\frac 1 2 \| \rho^{n+1}\|^2 >0, \quad \textrm{ for } n = 1 , \cdots, N-1
\end{equation}
we can conclude that,
\begin{align*}
\frac 1 4 \| \rho^{N} \|^2 +\frac \eps 4 \| \nabla c^{N} \|^2 & \le    {4\Delta t} \|\rho^{N-1} \|^2  +\sum_{n=1}^{N-2}  {5 \Delta t} \|\rho^n \|^2 + C_0 \\
 & \le     \sum_{n=1}^{N-1}  {5 \Delta t} \|\rho^n \|^2 + C_0  \\
 & \le {20 \Delta t}   \sum_{n=1}^{N-1} \left( \frac 1 4 \| \rho^{n} \|^2 +\frac \eps 4 \| \nabla c^{n} \|^2 \right) + C_0.
\end{align*}
where
\[
C_0= \frac 1 4 \| \rho^1 \|^2 +\frac \eps 4 \| \nabla c^1 \|^2+ \frac 1 4   \| 2\rho^{1}-\rho^{0} \|^2+\frac \eps 4  \| 2\nabla c^{1}-\nabla c^{0} \|^2 + \Delta t \|\rho^{0} \|^2.
\]
By induction, we have
\[
\frac 1 4 \| \rho^{N} \|^2 +\frac \eps 4 \| \nabla c^{N} \|^2  \le (1+20\Delta t)^{N-2} ({20 \Delta t} a_1 + C_0),
\]
where
\[
a_1= \frac 1 4 \| \rho^{1} \|^2 +\frac \eps 4 \| \nabla c^{1} \|^2.
\]
Obviously, $a_1 \le C_0$, and thus we have
\[
({20 \Delta t} a_1 + C_0) \le C_0 (1+{20 \Delta t})\, ,
\]
which implies
\[
\frac 1 4 \| \rho^{N} \|^2 +\frac \eps 4 \| \nabla c^{N} \|^2  \le {e^{20T} }C_0.
\]

Subsequently, the following condition is sufficient to guarantee the small data estimate \eqref{est:data2}:
\[
{e^{20T}} C_0 \le \frac 1 2.
\]
{Similar to the first order case, this condition implies the stability estimate, which can be shown by induction.}
\end{proof}

We would remark that, the small data conditions \eqref{est:data2a} \eqref{est:data2b} are not necessary conditions, and are made primarily due to technical issues. In our numerical simulations, we observe that unless the exact solutions to the Keller-Segel equations blow up, the numerical methods do not exhibit unstable behavior.

\subsection{Radially symmetric cases}
This section is devoted to the radially symmetric case. Recall the first order semi-discrete scheme
\begin{align} \label{eq:rho1-2}
\frac{\rho^{n+1}-\rho^n}{\Delta t} =\Delta \rho^{n+1} - \nabla \cdot (\rho^{n+1} \nabla c^{n+1}), \\ \label{eq:c1-2}
\eps \frac{c^{n+1}-c^n}{\Delta t} = \Delta c^{n+1}  + \rho^n\, .
\end{align}
If we confine ourselves to the radially symmetric case, we can write $ \rho (x)= \rho(r)$ and  $c(x)=c(r)$, and simplify the above semi-discrete scheme to 
\begin{align} \label{eq:rhor}
\frac{\rho^{n+1}-\rho^n}{\Delta t} &=\frac{1}{r} \frac{\partial }{\partial r} \left(r \frac{\partial}{\partial r} \rho^{n+1} \right)- \frac{1}{r} \frac{\partial }{\partial r} \left(r \rho^{n+1} \frac{\partial}{\partial r} c^{n+1}\right), \\ \label{eq:cr}
\eps \frac{c^{n+1}-c^n}{\Delta t} &= \frac{1}{r} \frac{\partial }{\partial r} \left(r \frac{\partial}{\partial r} c^{n+1} \right)  + \rho^n, \\
\frac{\partial}{\partial r} \rho^{n+1}(0)&=0, \quad \frac{\partial}{\partial r} c^{n+1} (0)=0.
\end{align}
Then our task is to propose a numerical scheme to this system that is both conservative and positivity preserving. 

If the computation domain is an anulus $a<r<b$, where $0<a<b$, it may be convenient to introduce an auxiliary variable $s=\log r$, or equivalently $r=e^s$, and we have 
\begin{align} \label{eq:rhor2}
e^{2s} \frac{\rho^{n+1}-\rho^n}{\Delta t} &= \frac{\partial^2}{\partial s^2} \rho^{n+1}- \frac{\partial }{\partial s} \left( \rho^{n+1} \frac{\partial}{\partial s} c^{n+1}\right), \\ \label{eq:cr2}
\eps e^{2s} \frac{c^{n+1}-c^n}{\Delta t} &= \frac{\partial^2}{\partial s^2} c^{n+1}+ e^{2s} \rho^n. 
\end{align}
Clearly, we can rewrite \eqref{eq:rhor2} in the following conservative form
\[
e^{2s} \frac{\rho^{n+1}-\rho^n}{\Delta t} = \frac{\partial }{\partial s} \left(e^{c^{n+1}}\frac{\partial}{\partial s} \frac{\rho^{n+1} }{e^{c^{n+1}}}\right).
\]
This system shares the same structure with the one in the cartesian coordinates, and one can design a positivity preserving scheme in the same spirit. However, when $r \rightarrow 0$, $s \rightarrow -\infty$. Therefore, in order the save the information in the vicinity of $r=0$, extra effort is needed when truncating the computational domain in $s$. 

We consider an alternative approach. The key ingredient is the following reformulation of equation  \eqref{eq:rhor} 
\begin{equation}\label{eq:rhor3}
\frac{\rho^{n+1}-\rho^n}{\Delta t} = \frac{1}{r} \frac{\partial }{\partial r} \left(r e^{c^{n+1}}\frac{\partial}{\partial r} \frac{\rho^{n+1} }{e^{c^{n+1}}}\right)\, ,
\end{equation}
Here the computational domain is chosen $r\in[0,L]$, and the mesh size is $\Delta r=\frac{L}{N_r}$, where $N_r \in \mathbb N$ is the number of grid points. $r_j= - \frac{1}{2}\Delta r + j \Delta r$, for {$j=0, 1,\cdots,N_r$}.  {Please note here, $r_0=- \frac{1}{2}\Delta r$ is introduced to handle the following boundary condition at $r=0$.}  We denote the numerical approximation of $f^n(r_j)$ by $f^n_j$. The boundary condition at $r=0$ is
\[
(\rho^n)'(0)=0,\quad (c^n)'(0)=0,
\]
and thus we have
\[
\rho^n_0=\rho^n_1,\quad c^n_0=c^n_1.
\]

For simplicity, we still use $M=e^c$. Then equation \eqref{eq:rhor3} and \eqref{eq:cr} are further discretized into
\begin{align}\label{eq:rhor4}
\frac{\rho^{n+1}_j-\rho^n_j}{\Delta t} &=\frac{1}{\Delta r^2} \frac{1}{r_j}\sqrt{r_j r_{j+1} M^{n+1}_j M^{n+1}_{j+1}} \left( \frac{\rho^{n+1}_{j+1}}{M^{n+1}_{j+1}} - \frac{\rho^{n+1}_j}{M^{n+1}_{j}}\right) \\
\nonumber
&-\frac{1}{\Delta r^2} \frac{1}{r_j}\sqrt{r_j r_{j-1} M^{n+1}_j M^{n+1}_{j-1}} \left( \frac{\rho^{n+1}_{j}}{M^{n+1}_{j}} - \frac{\rho^{n+1}_{j-1}}{M^{n+1}_{j-1}}\right), \\
\label{eq:cr4}
\eps \frac{c_j^{n+1}-c_j^n}{\Delta t} &=\frac{1}{\Delta r^2} \frac{1}{r_j} \sqrt{r_j r_{j+1}} \left( c^{n+1}_{j+1}-c^{n+1}_{j}\right) \\
\nonumber
&- \frac{1}{\Delta r^2} \frac{1}{r_j} \sqrt{r_j r_{j-1}} \left( c^{n+1}_{j}-c^{n+1}_{j-1}\right)+\rho^n_j.
\end{align}
As always, at every time step, we first solve the equation \eqref{eq:cr4} for $c^{n+1}_j$ and then equation \eqref{eq:rhor4} for $\rho^{n+1}_j$. 

Multiply \eqref{eq:rhor4} by $r_j$ and sum over $j$, we can similarly show that
\[
\sum_j r_j \rho^{n+1}_j = \sum_j r_j \rho^{n}_j\, ,
\]
which preserves the discrete mass in the polar coordinates. Moreover, similar to the case in Cartesian coordinates, we can show that the fully discrete scheme \eqref{eq:rhor4} and \eqref{eq:cr4} preserves positivity of $\rho^{n+1}_j$. Indeed, suppose $\rho^n_j \ge 0$, we can recast equation \eqref{eq:rhor4} as
\[
\rho^{n+1}_j= \Delta t R^{n+1}_j +\rho^n_j,
\]
where
\begin{align*}
R^{n+1}_j &=\frac{1}{\Delta r^2} \frac{1}{r_j}\sqrt{r_j r_{j+1} M^{n+1}_j M^{n+1}_{j+1}} \left( \frac{\rho^{n+1}_{j+1}}{M^{n+1}_{j+1}} - \frac{\rho^{n+1}_j}{M^{n+1}_{j}}\right) \\
&-\frac{1}{\Delta r^2} \frac{1}{r_j}\sqrt{r_j r_{j-1} M^{n+1}_j M^{n+1}_{j-1}} \left( \frac{\rho^{n+1}_{j}}{M^{n+1}_{j}} - \frac{\rho^{n+1}_{j-1}}{M^{n+1}_{j-1}}\right).
\end{align*}
If we assume that $\frac{\rho^{n+1}_j}{M^{n+1}_{j}}$ achieves its mininum when $j=j'$ with $\frac{\rho^{n+1}_{j'}}{M^{n+1}_{j'}}<0$, the from the above formulation $R_{j'}^{n+1}>0$ which and thus $\rho_{j'}>0$, leading to a contradiction. Therefore, the positivity is preserved. 


\section{Subcritical case $m> 1$}\label{sec:mn1}
\subsection{Dynamical and steady state}

In this section, we study the 2D Keller-Segel model in the subcritical regime $m>1$
\begin{align}
\partial_t \rho^\eps &= \Delta (\rho^\eps)^m - \nabla \cdot (\rho^\eps \nabla c^\eps), \label{eq:rhom} \\
\eps \partial_t c^\eps & =\Delta c^\eps  + \rho^\eps, \label{eq:cm}\\
\rho^{\eps}(x,0)& =f(x),\quad c^{\eps}(x,0)=g(x).
\end{align}
We first review some properties of this system.

Rewrite equation \eqref{eq:rhom} as
\begin{equation}
\partial_t \rho^\eps =\nabla \cdot \left( \rho^\eps \nabla \mu \right),
\end{equation}
where $\mu$ is the chemical potential
\begin{equation}
\mu = \left \{ 
\begin{split}
&\frac{m}{m-1} (\rho^\eps)^{m-1}-c^\eps,\quad m\ne 1, \\
& \log \rho^\eps - c^\eps, \quad m=1.
\end{split}
  \right. 
\end{equation}
Then the (nonnegative) steady states to this system, which are denoted by $\rho_s^\eps$ and $c_s^\eps$, satisfy the following system in the sense of distribution
\begin{align}
 & \Delta (\rho_s^\eps)^m - \nabla \cdot (\rho_s^\eps \nabla c_s^\eps)=0, \\
& \Delta c_s^\eps  + \rho_s^\eps=0.
\end{align}

To explore the radial symmetry of the steady solution, we define
\begin{equation}
\Omega = \left\{ x \in \R^2; \rho_s^\eps(x)>0 \right\}
\end{equation}
and assume it is connected for simplicity. By \cite{BianLiu2013}, we know  that, when $m\ne 1$, $\rho_s^\eps \in C(\bar \Omega)$ satisfies
\begin{equation}\label{eq:mn1-1}
\begin{split}
 & \frac{m}{m-1} (\rho_s^\eps)^{m-1}-c_s^\eps= \bar c, \quad  x \in \Omega,\\
 & \rho_s^\eps=0, \, x\in \R^2 \setminus \Omega, \quad \rho_s^\eps>0, \, x \in \Omega, \\
& -\Delta c_s^\eps = \rho_s^\eps.
\end{split}
\end{equation}
If we denote $\phi= \frac{m-1}{m}(c^\eps_s+ \bar c)$, then  \eqref{eq:mn1-1} implies
\begin{equation}\label{eq:mn1-2}
\begin{split}
&-\Delta \phi = \frac{m-1}{m} \phi^k, \quad x\in \Omega, \quad k=\frac{1}{m-1},\\
&\phi=0,\, x\in \partial \Omega, \quad \phi>0, \, x \in \Omega. 
\end{split}
\end{equation}
The nonnegative radial classical solution of \eqref{eq:mn1-2} can be written in the form $\phi(x)=\phi(r)$, thus, $\forall a>0$, if we define $L=\{ r ; \phi(r) \ge 0 \}$, $\phi(r)\in C^2([0,L])$ satisfies the following initial value problem
\begin{equation}
\begin{split}
& \phi_{rr}+ \frac 2 r \phi_r = - \frac{m-1}{m} \phi^k, \quad r>0,\quad k=\frac{1}{m-1}, \\
& \phi'(0)=0,\quad \phi(0)=a>0.
\end{split}
\end{equation}
Here, $\phi(r)^k$ is meaningful before it reaches zero.

 When $m= 1$, the steady solution $\rho_s^\eps \in C(\bar \Omega)$ satisfies
\begin{equation}\label{eq:m1-1}
\begin{split}
 & \log \rho_s^\eps-c_s^\eps= \bar c, \quad  x \in \Omega,\\
 & \rho_s^\eps=0, \, x\in \R^2 \setminus \Omega, \quad \rho_s^\eps>0, \, x \in \Omega, \\
& -\Delta c_s^\eps = \rho_s^\eps.
\end{split}
\end{equation}
If we denote $\phi= \log \rho_s^\eps$, then,  \eqref{eq:m1-1} implies
\begin{equation}\label{eq:m1-2}
\begin{split}
&-\Delta \phi =e^\phi, \quad x\in \R^2.
\end{split}
\end{equation}
The nonnegative radial classical solution of \eqref{eq:m1-2} can be written in the form $\phi(x)=\phi(r)$, thus, $\forall a>0$, if we define $L=\{ r ; \phi(r) \ge 0 \}$, $\phi(r)\in C^2([0,L])$ satisfies the following initial value problem
\begin{equation}
\begin{split}
& \phi_{rr}+ \frac 2 r \phi_r = - e^\phi, \quad r>0, \\
& \phi'(0)=0,\quad \phi(0)=a>0.
\end{split}
\end{equation}

\subsection{Numerical scheme}
Similar to the critical case, we first propose the following semi-discrete method for the (2D) Keller-Segel model with exponent $m$,
\begin{align} \label{eq:rhomn}
\frac{\rho^{n+1}-\rho^n}{\Delta t} =\Delta (\rho^{n+1})^m - \nabla \cdot (\rho^{n+1} \nabla c^{n+1}) \\ \label{eq:cmn}
\eps \frac{c^{n+1}-c^n}{\Delta t} = \Delta c^{n+1}+ \rho^n. 
\end{align}

Here a Newton's solver is inevitable due to the nonlinearity on the right hand side. And because of this, the stability analysis can be very complicated. We skip the analysis on this scheme here and instead show substantial numerical evidence to verify the properties of this method to the model especially in the subcritical  cases. 

Another issue of this scheme concerns the positivity. We observe numerically that when $m>1$, this scheme is not necessarily positivity preserving, especially when the solution is compacted supported, or when the diffusion exponent $m$ is large.

 To propose a positivity scheme, recall that equation \eqref{eq:rhom} can be reformulated as
\begin{align}
\partial_t \rho^\eps &= \nabla \cdot \left[ \rho^\eps \exp\left(c^\eps-\frac{m}{m-1}(\rho^\eps)^{m-1}\right) \nabla \exp\left(-c^\eps+\frac{m}{m-1}(\rho^\eps)^{m-1} \right) \right]. \label{eq:rhom2}
\end{align}

Let $M=\exp\left(c^\eps-\frac{m}{m-1}(\rho^\eps)^{m-1}\right)$, then we have equivalently,
\begin{align}
\partial_t \rho^\eps &= \nabla \cdot \left[ \rho^\eps M \nabla \frac{1}{ M}\right]= \nabla \cdot \left[ \rho^\eps M \nabla \frac{\rho^\eps}{\rho^\eps M}\right]. \label{eq:rhom2}
\end{align}
Therefore, we propose the following semi-discrete, semi-implicit scheme 
\begin{align} \label{eq:rhomn2}
\frac{\rho^{n+1}-\rho^n}{\Delta t} = \nabla \cdot \left[ \rho^n M^n \nabla \frac{\rho^{n+1}}{\rho^n M^n}\right], \\ \label{eq:cmn2}
\eps \frac{c^{n+1}-c^n}{\Delta t} = \Delta c^{n+1}  + \rho^n. 
\end{align}
In the radial symmetric case, we write $ \rho^\eps (x)= \rho^\eps(r)$ and  $c^\eps (x)=c^\eps (r)$, and the system \eqref{eq:rhom} and \eqref{eq:cm} rewrite as
\begin{align} \label{eq:rhorm}
\partial_t \rho^\eps &=\frac{1}{r} \frac{\partial }{\partial r} \left(r \frac{\partial}{\partial r} (\rho^{\eps})^m \right)- \frac{1}{r} \frac{\partial }{\partial r} \left(r \rho^{\eps} \frac{\partial}{\partial r} c^{\eps}\right), \\ \label{eq:crm}
\eps \partial_t c^\eps &= \frac{1}{r} \frac{\partial }{\partial r} \left(r \frac{\partial}{\partial r} c^{\eps} \right)+ \rho^\eps, \\
\frac{\partial}{\partial r} \rho^{\eps}(0,t)&=0, \quad \frac{\partial}{\partial r} c^{\eps} (0,t)=0.
\end{align}

Again, we denote by $M=\exp\left(c^\eps-\frac{m}{m-1}(\rho^\eps)^{m-1}\right)$, equation \eqref{eq:rhorm} reformulate to
\begin{align} \label{eq:rhorm2}
\partial_t \rho^\eps &=\frac{1}{r} \frac{\partial }{\partial r} \left(r \rho^\eps M\frac{\partial}{\partial r} \frac{1}{M} \right)=\frac{1}{r} \frac{\partial }{\partial r} \left(r \rho^\eps M\frac{\partial}{\partial r} \frac{\rho^\eps}{\rho^\eps M} \right).
\end{align}
And the corresponding the following semi-discrete, semi-implicit scheme reads
\begin{align} \label{eq:rhormn2}
\frac{\rho^{n+1}-\rho^n}{\Delta t} =\frac{1}{r} \frac{\partial }{\partial r} \left(r \rho^{n} M^n \frac{\partial}{\partial r} \frac{\rho^{n+1}}{\rho^{n} M^n} \right), \\ \label{eq:crmn2}
\eps \frac{c^{n+1}-c^n}{\Delta t} = \Delta c^{n+1}  + \rho^n. 
\end{align}
Similar to this previous cases, we can show that five-point scheme for the semi-discrete system \eqref{eq:rhomn2} \eqref{eq:cmn2} and the centered difference approximation for the semi-discrete system \eqref{eq:rhormn2} \eqref{eq:crmn2} are both conservative and positivity preserving. As the proofs are similar to that of the previous cases, we shall omit them here.

\section{Numerical examples} \label{sec:numerics}
In this section, we present several numerical examples in dimension two. Here periodic boundary condition is used among all examples. The first three examples concern $m=1$ whereas the last one focuses on $m>1$. 
\subsection{Convergence}
First we check the accuracy of the first and second order schemes in cartesian coordinates. Here the initial data takes the following form
\begin{eqnarray}
\rho(x,0) = 4e^{-(x^2+y^2)}, \quad c(x,0) = e^{-(x^2+y^2)/2}, \qquad x\in[-5,5] \quad y \in [-5,5],
\end{eqnarray}
and output time is $t_\textrm{max}=5$. The meshes are chosen $\Delta x = 1, 0.5, 0.25, 0.125$, respectively, and $\Delta t = \Delta x$. The relative error is computed as
\begin{eqnarray} \label{eqn: error1}
error_{\Delta x} = \frac{|| \rho_{\Delta x}(x,t_\textrm{max}) - \rho_{2\Delta x} (x,t_\textrm{max})||_{\ell^1}}{ || \rho_{\Delta x} ||_{\ell^1}},
\end{eqnarray}
and collected in Fig. \ref{fig:conv}. Here a uniform convergence for both first and second order schemes are observed for a wide range of $\eps$.

\begin{figure}[!h]
\includegraphics[width = 0.5\textwidth]{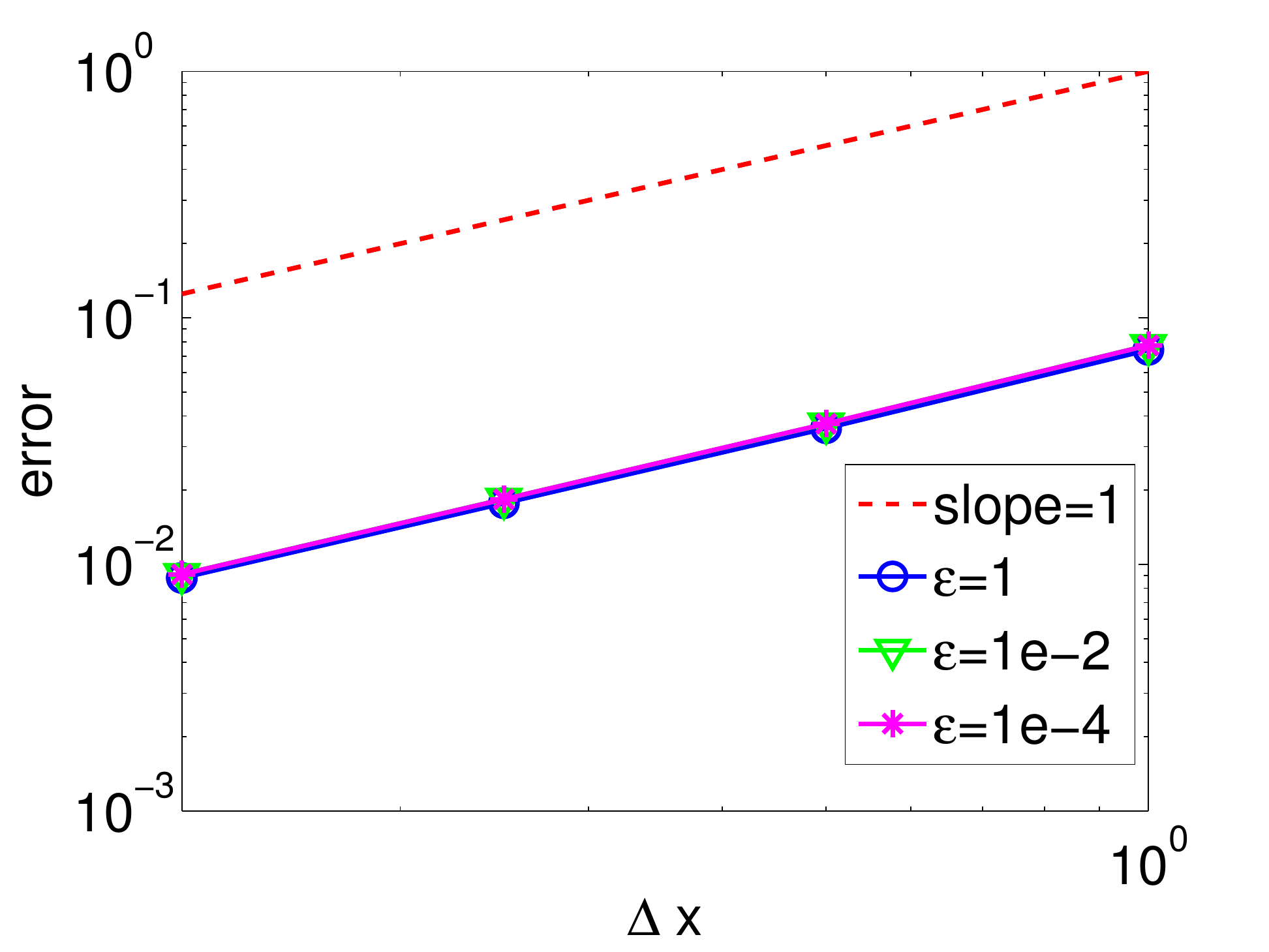}
\includegraphics[width = 0.5\textwidth]{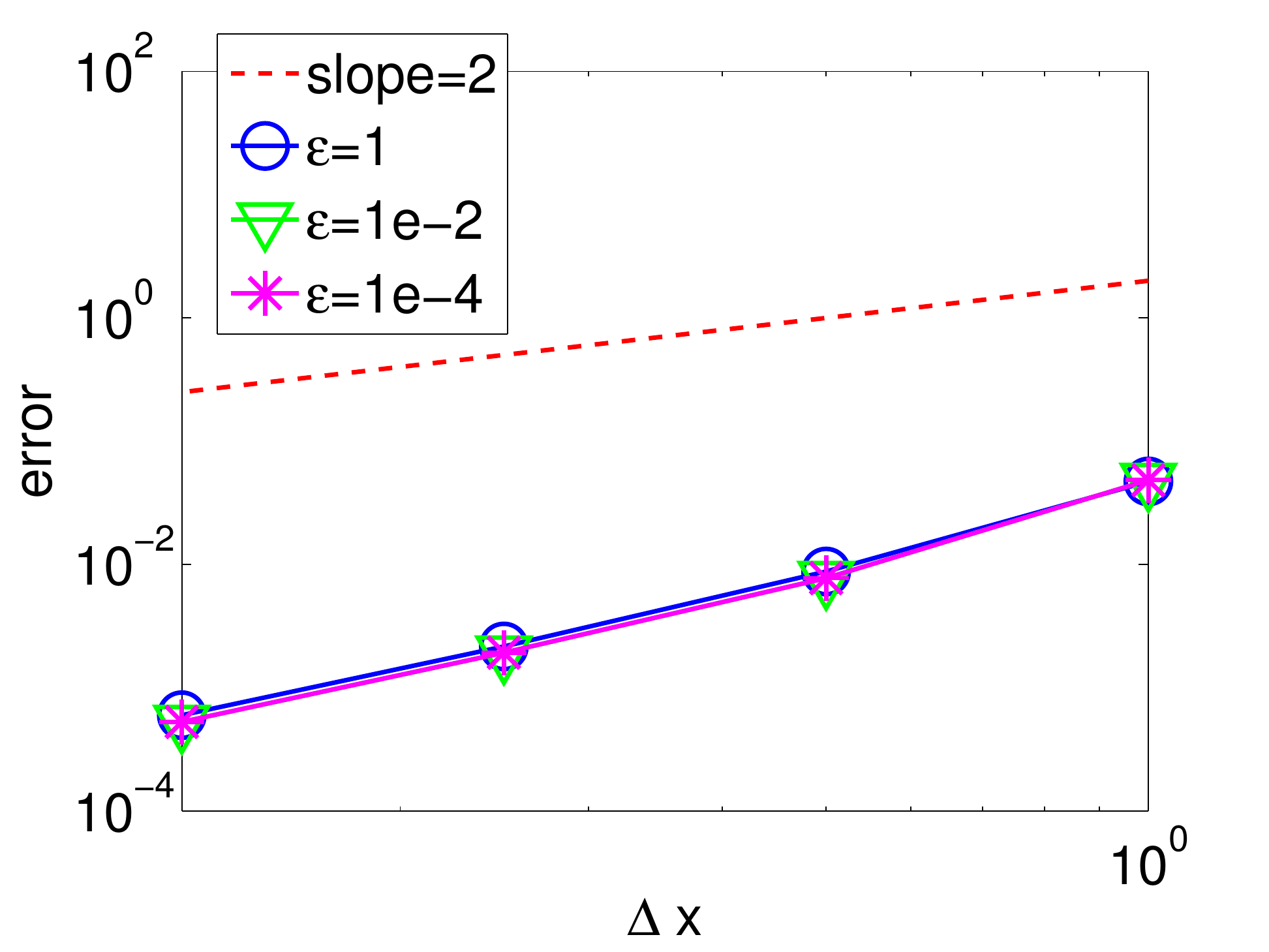}
\caption{Uniform convergence of our schemes: error (\ref{eqn: error1}) versus mesh size $\Delta x = \Delta y$ for different $\eps = 10^{-4}$, $10^{-2}$ and $1$. The red dashed line is a reference with a fixed slope. Left: first order scheme (\ref{eq:dc}) (\ref{eq:dh}). Right: second order scheme (\ref{eq:rho3}) (\ref{eq:c3}).}
\label{fig:conv}
\end{figure}

\subsection{Asymptotic behavior}
Next, we demonstrate the asymptotic behavior of both $\rho$ and $c$. Both asymptotic in small $\eps$ limit and long time limit will be considered. 

\subsubsection{Quasi-static asymptotic behavior} \label{section421}
Denote $\rho^\eps$ and $c^\eps$ the solution to (\ref{eq:dc}) and (\ref{eq:dh}), and $\rho^0$ and $c^0$ the solutions with $\eps =0$, and we compute the $\ell^1$ error in time:
\begin{align}
&\|\rho^\eps(x,y,t) - \rho^0(x,y,t) \|_{\ell^1} = \sum_{i, j} |(\rho^\eps)_{i,j}^n- (\rho^0)_{i,j}^n| \Delta x \Delta y ,\label{eqn:error-asy-rho}
\\
&\|c^\eps(x,y,t) - c^0(x,y,t) \|_{\ell^1} = \sum_{i, j} |(c^\eps)_{i,j}^n- (c^0)_{i,j}^n| \Delta x \Delta y .\label{eqn:error-asy-c}
\end{align}
The initial data is chosen to be 
\begin{align} \label{IC:421}
\rho(x,0) = 400e^{-100(x^2 + y^2)}, \qquad c(x,0) = e^{-50(x^2+y^2)}
\end{align}
such that $\rho(x,0) \neq (1-\Delta)^{-1} f(x,0)$. The results are gathered in Fig.\ref{fig:asymp} for different choices of $\eps$. Here the computational domain is $(x,y) \in [-1,1] \times [-1,1]$ the meshes are $\Delta x = \Delta y  =0.05$, and we use both big time step $\Delta t = 0.05$ and small time step $\Delta t = 5e-4$. It is shown that in $c$, the error undergoes a drastic change at the beginning until it reaches a state after which the errors decrease at the order of $\eps$. This initial period time is independent of our choice of time step, which implies that it is a period of initial layer. After such layer, the error decreases as $\eps$ decreases, and they change at the same order, as suggested in Section 2. On the contrary, the error in $\rho$ varies at the same order of $\eps$ starting from the beginning, which implies the non-existence of initial layer. This transition can be observed even with a coarse time step, as shown in Fig.~\ref{fig:asymp}. To get a closer look at the dynamics in this layer regime, we have a zoom-in plot in the lower left corner are computed using small $\Delta t< 10^{-3}$, less than the smallest $\eps$ we choose here. Then a similar transition discussed above is observed, further confirm the asymptotic behavior of the solutions. 
\begin{figure}[!h]
\includegraphics[width = 0.5\textwidth]{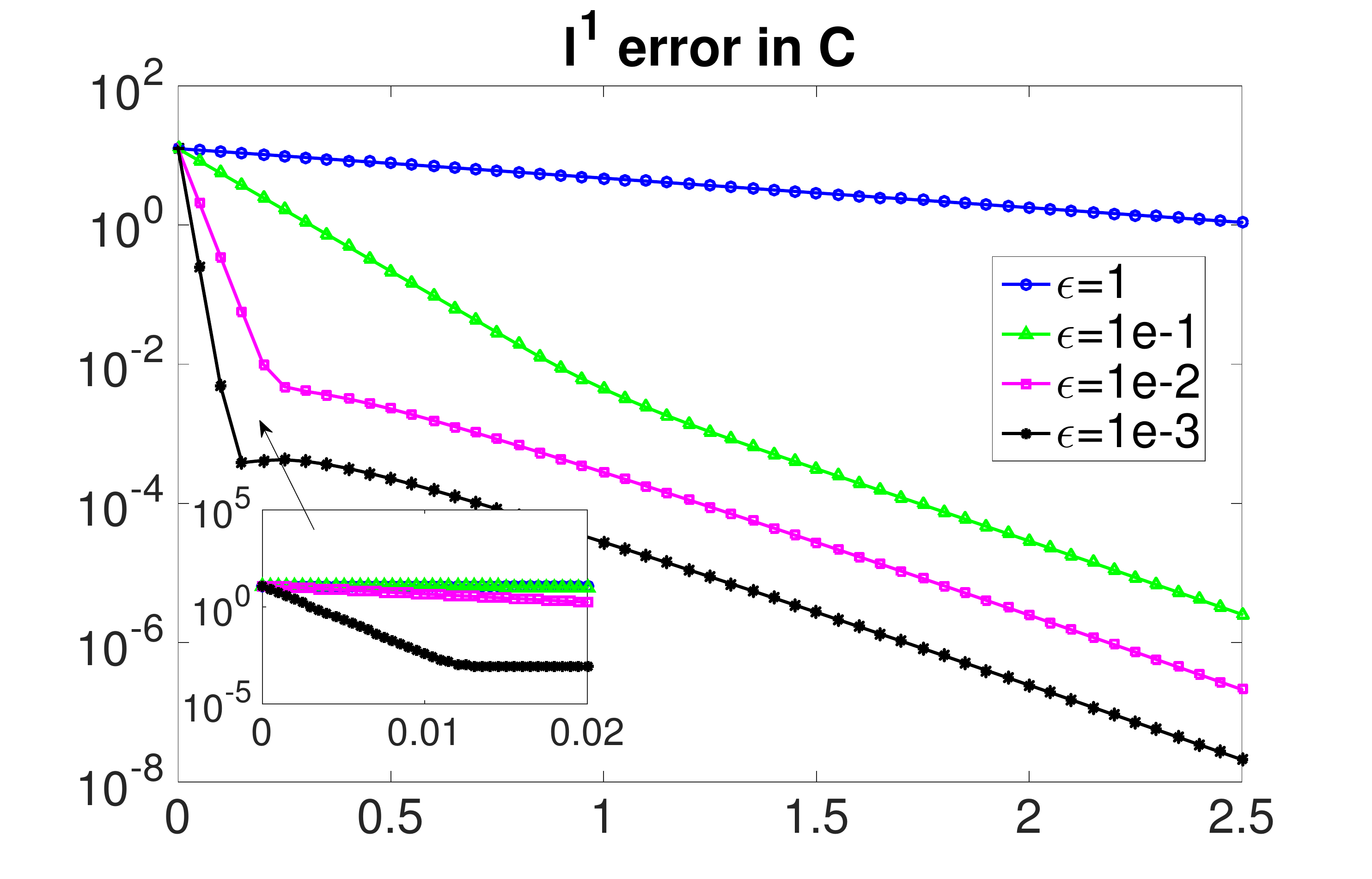}
\includegraphics[width = 0.5\textwidth]{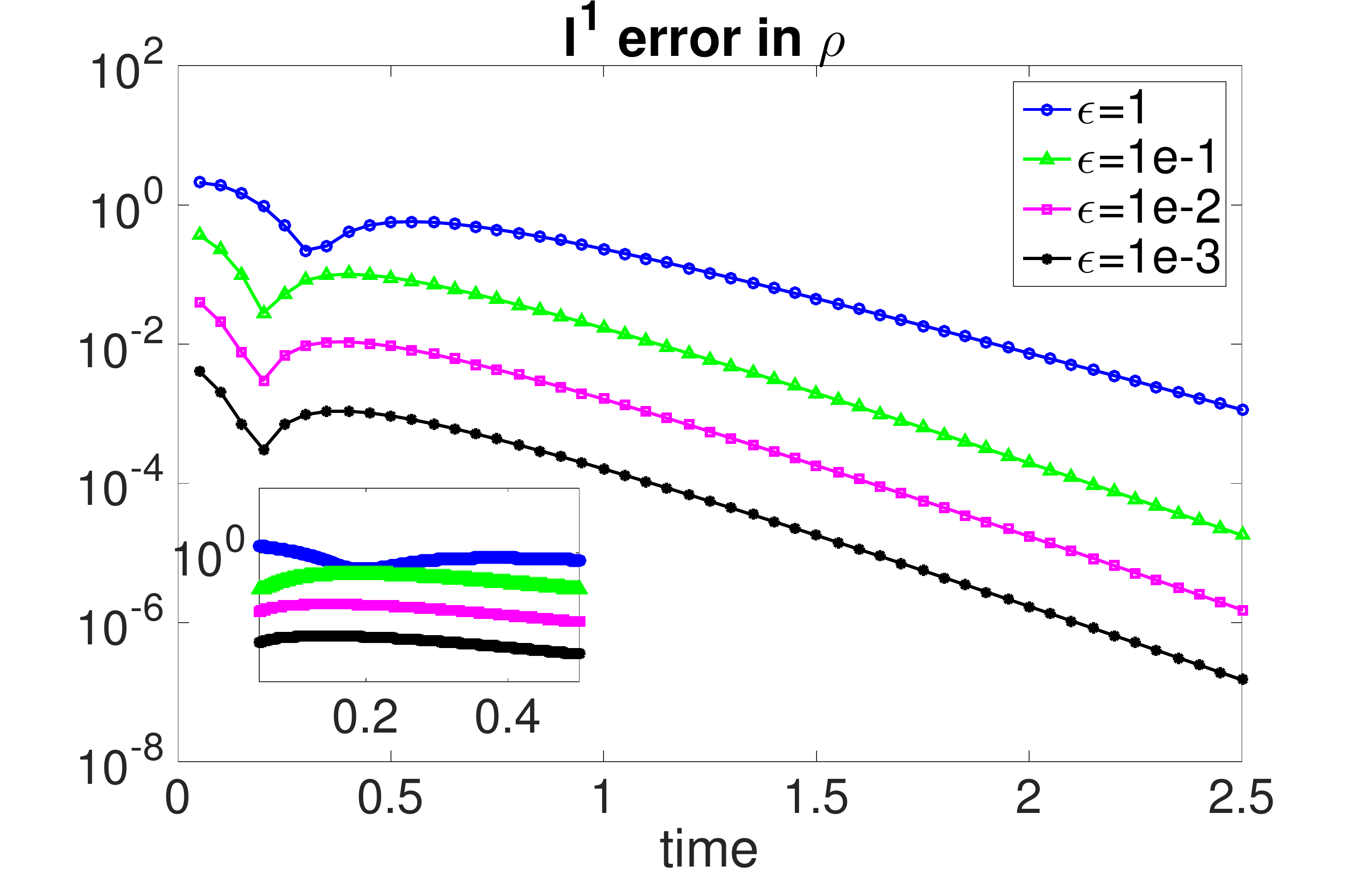}
\caption{Left: $\ell^1$ error in $c$ (\ref{eqn:error-asy-c}). Right:  $\ell^1$ error in $\rho$ (\ref{eqn:error-asy-rho}). Here $\Delta x = \Delta y  =0.05$, $\Delta t = 0.05$ for the big picture and $\Delta t = 5e-4$ in the pictures on the lower left corner.}
\label{fig:asymp}
\end{figure}

\subsubsection{Long time behavior}
Here we briefly compute the free energy at each time. The initial condition is taken the same as in (\ref{IC:421}), and the computational domain, mesh size and time step are kept all the same as in section \ref{section421}. When $\eps=1$, the free energy is defined in (\ref{eqn:energy1}), and (\ref{eq:111}) when $\eps = 0$. In Fig \ref{fig:free-energy}, we plot both cases and observe the decay of energy in time, a property highlighted in \cite{CCH15}. 
\begin{figure}[!h]
\includegraphics[width = 0.5\textwidth]{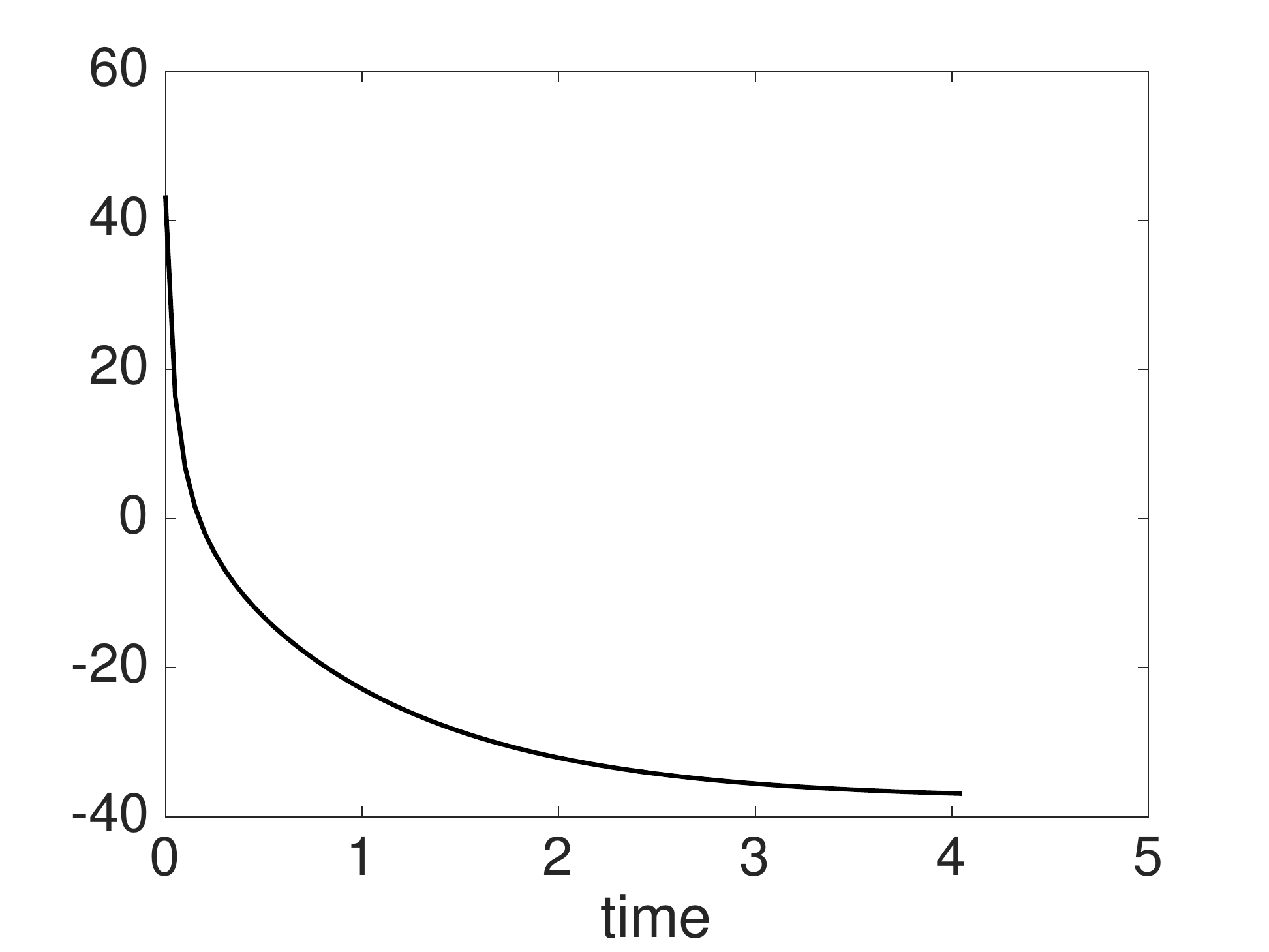}
\includegraphics[width = 0.5\textwidth]{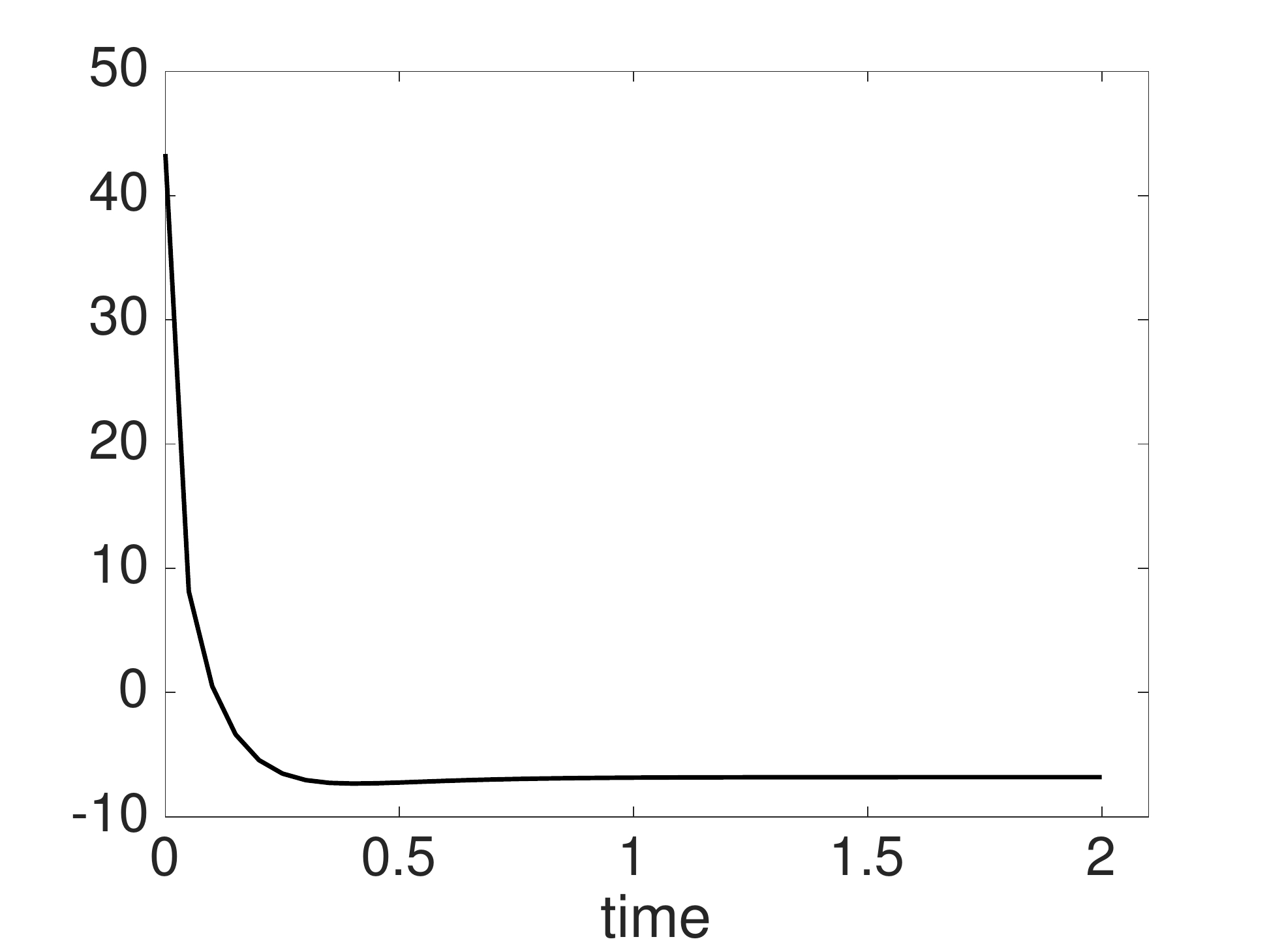}
\caption{Plot of free energy versus time. Left: $\eps=1$ with free energy defined in (\ref{eqn:energy1}). Right: $\eps = 0$ with energy (\ref{eq:111}). Here $\Delta x = \Delta y = \Delta t = 0.05$.}
\label{fig:free-energy}
\end{figure}

\subsection{Blow up}
In this subsection, we focus on the cases when $\rho$ blows up, and show that our schemes, both in cartesian and polar coordinates, are positivity preserving regardless of the choice of $\Delta t$. For the radial symmetric case, consider the following initial data for $\rho(r,0)$
\begin{equation}
\rho(r,0) = 600e^{-60r^2}, \quad r\in[0,2]
\end{equation}
and we choose $c(r,0)$ such that it solves
\begin{equation}
\frac{1}{r} \frac{\partial}{\partial r}\left( r \frac{\partial}{\partial r} c(r,0)\right) - c(r,0) + \rho(r,0) = 0. 
\end{equation}
When $\eps =0$, we plot the profile of $\rho$ at different times in Fig. \ref{fig:blowup_raidal} on the left, and on the right, we show the maximum of $\rho$ with time. Different mesh sizes are used, for the upper figures $\Delta r = 0.025$ and lower figures $\Delta r = 0.00625 $, and $\Delta t = \Delta r/5$. It is interesting to point out that, the maximum amplitude of $\rho$ increases by a factor of $16$ as $\Delta r$ decreases by $1/4$, indicating a blow up of $\rho$ in $\mathcal{O}\left(\frac{1}{\Delta r^2}\right)$ fashion. 
\begin{figure}[!h]
\includegraphics[width = 0.45\textwidth]{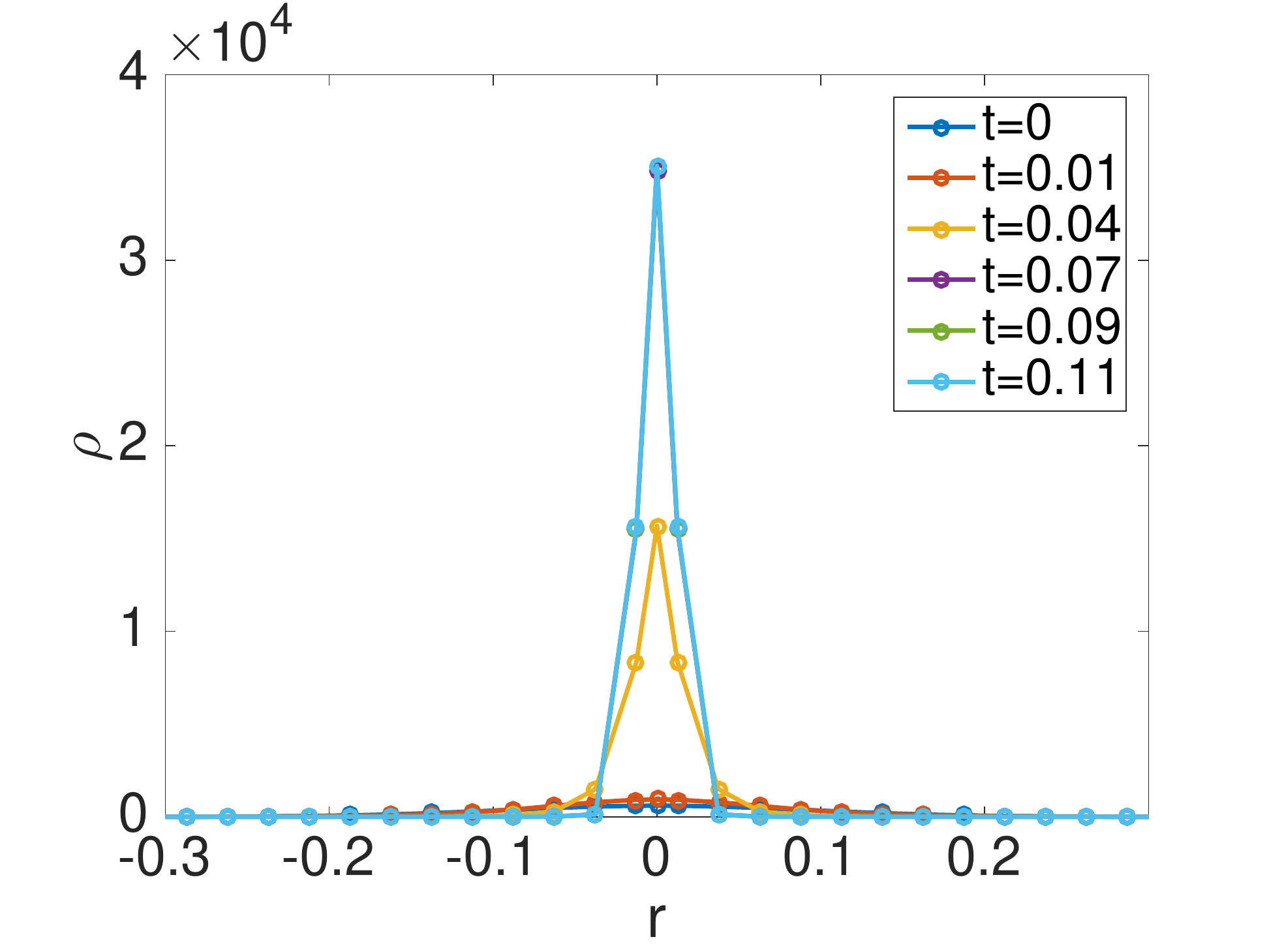}
\includegraphics[width = 0.45\textwidth]{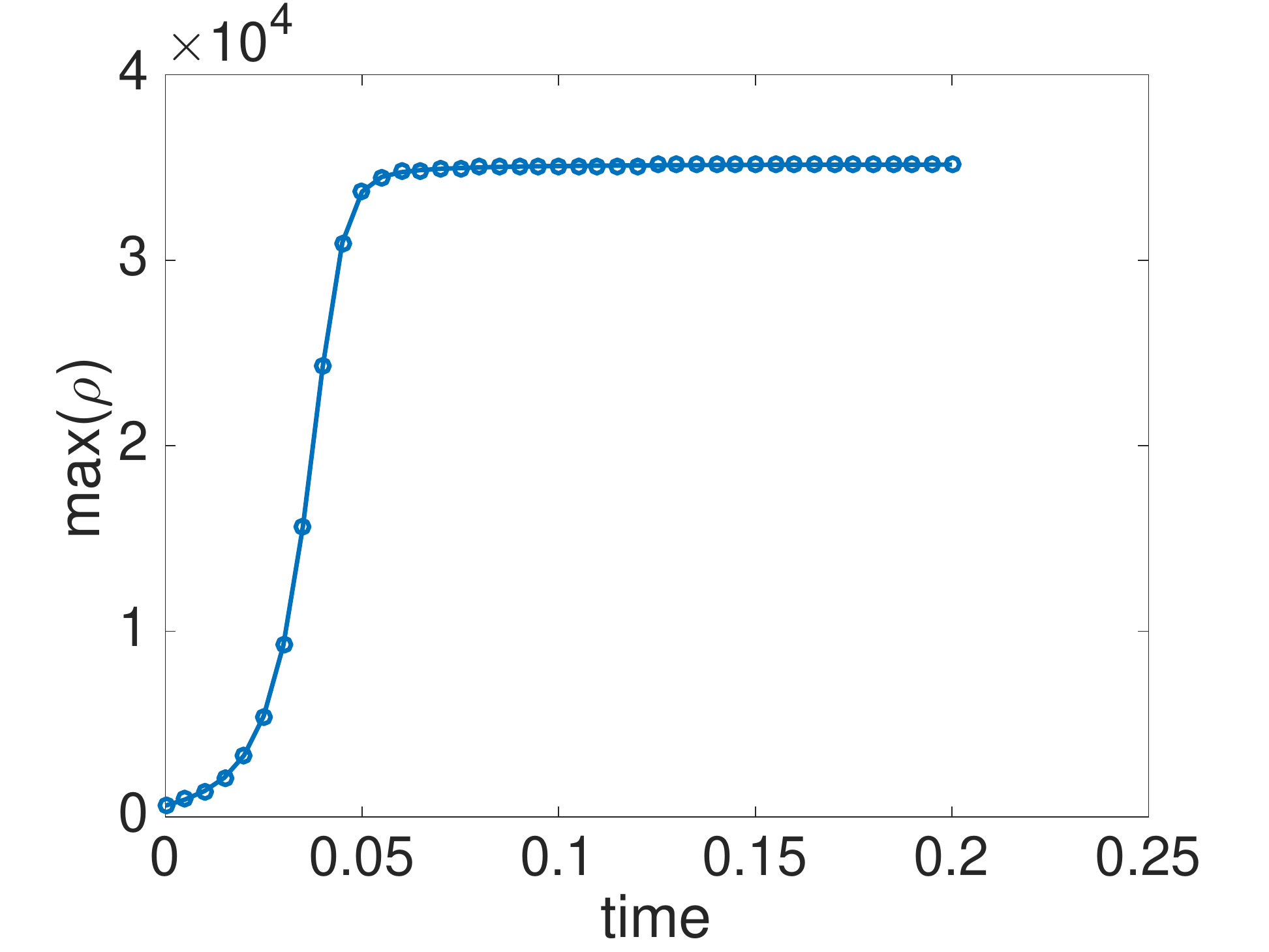}
\\
\includegraphics[width = 0.45\textwidth]{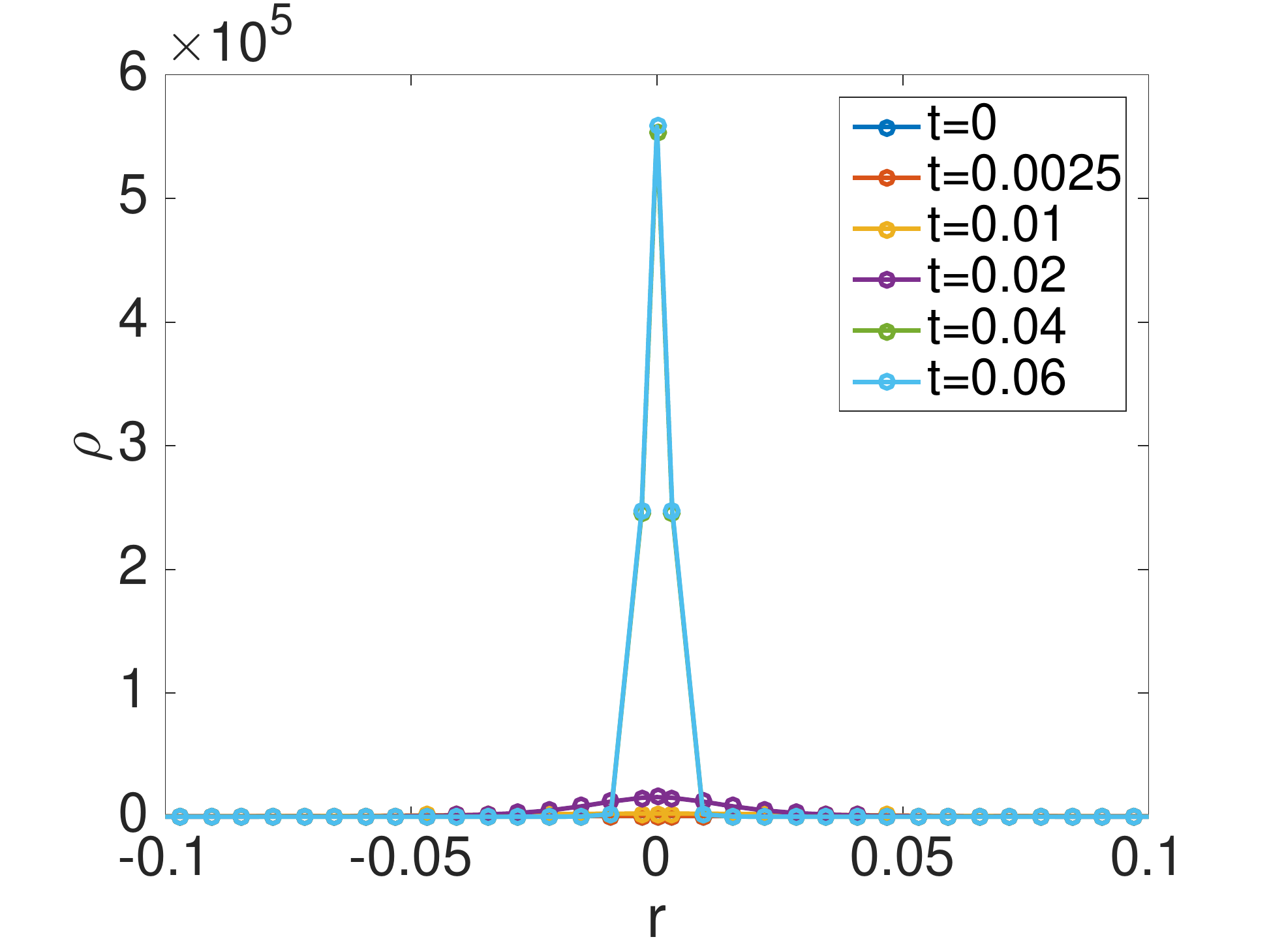}
\includegraphics[width = 0.45\textwidth]{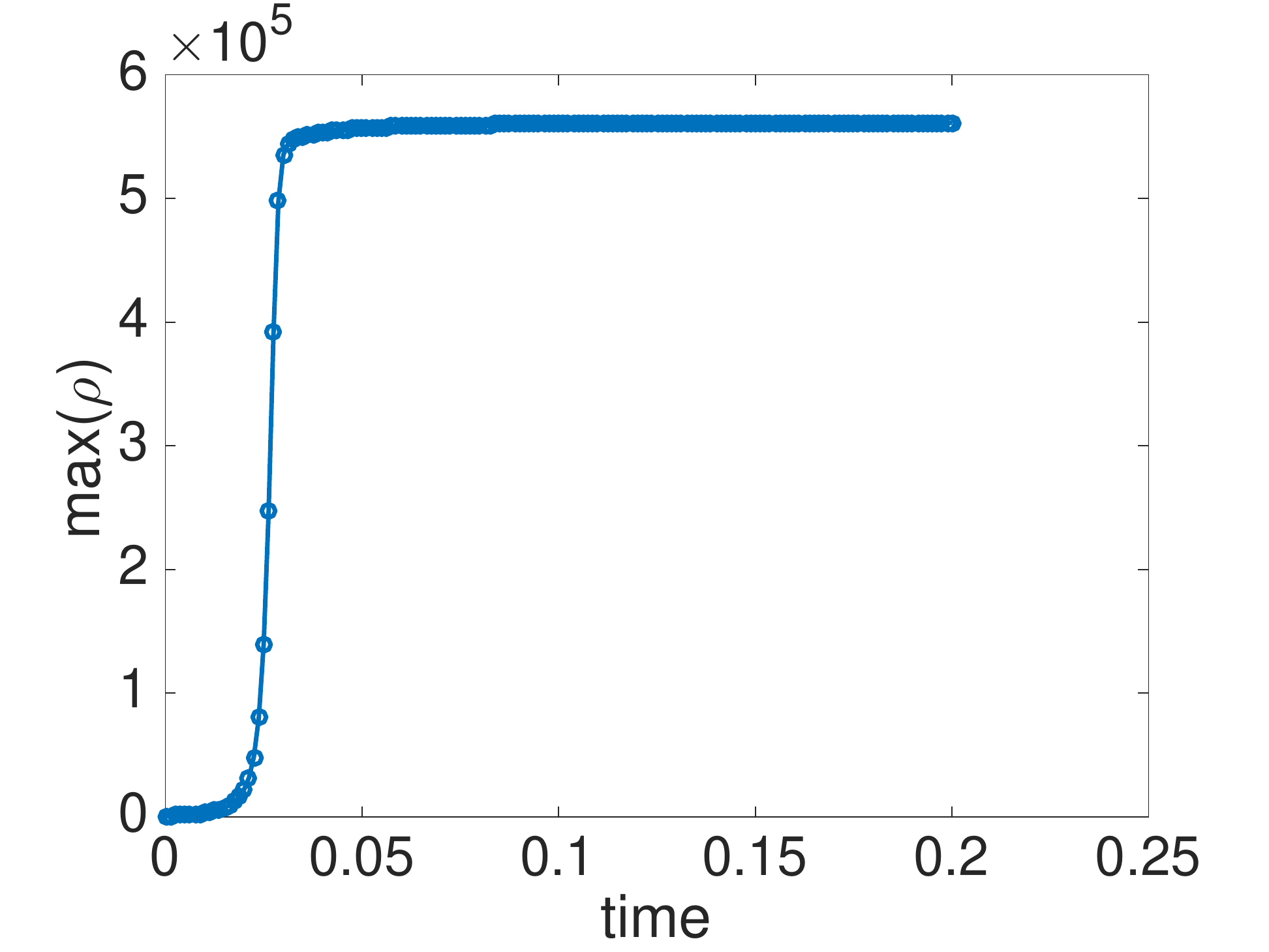}
\caption{Computation of the model in radial symmetric case with $\eps = 0$. Left: the plot of $\rho$ at different times. Right: $\max(\rho)$ versus time. Top: $\Delta r = 0.025$. Bottom: $\Delta r = 0.00625$. $\Delta t = \Delta r/5$.}
\label{fig:blowup_raidal}
\end{figure}

Similarly, in cartesian coordinates, we consider the following initial data
\begin{equation}
\rho(x,y,0) = 600e^{-60(x^2 + y^2)}, \quad (x,y)\in[-4,4]\times[-4,4], \quad c(x,y,0) = 300e^{-30(x^2 + y^2)}.
\end{equation}
In Fig. \ref{fig:blowup_cart} on the left, we plot a slice of solution at $y=0$ for different times, where a trend to blow up is observed. On the right, we plot the maximum magnitude of $\rho$, which is very similar to the one obtained in the radial symmetric case. Also, we observe that this magnitude increases at the order of $\mathcal{O}\left( \frac{1}{\Delta x^2}\right)$. Similar type of blow up is observed in \cite{LiShuYang}.

\begin{figure}[!h]
\includegraphics[width = 0.55\textwidth]{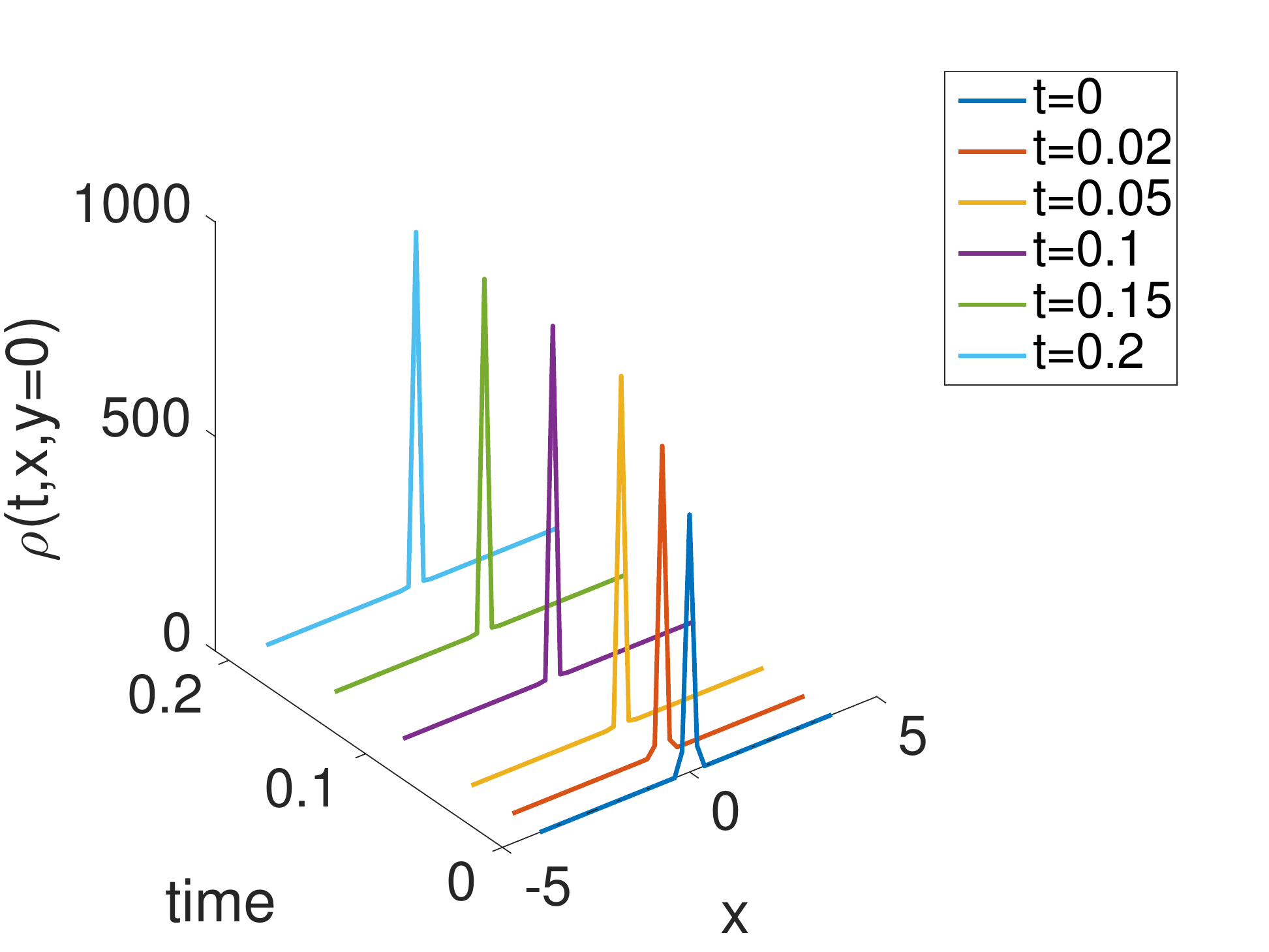}
\includegraphics[width = 0.45\textwidth]{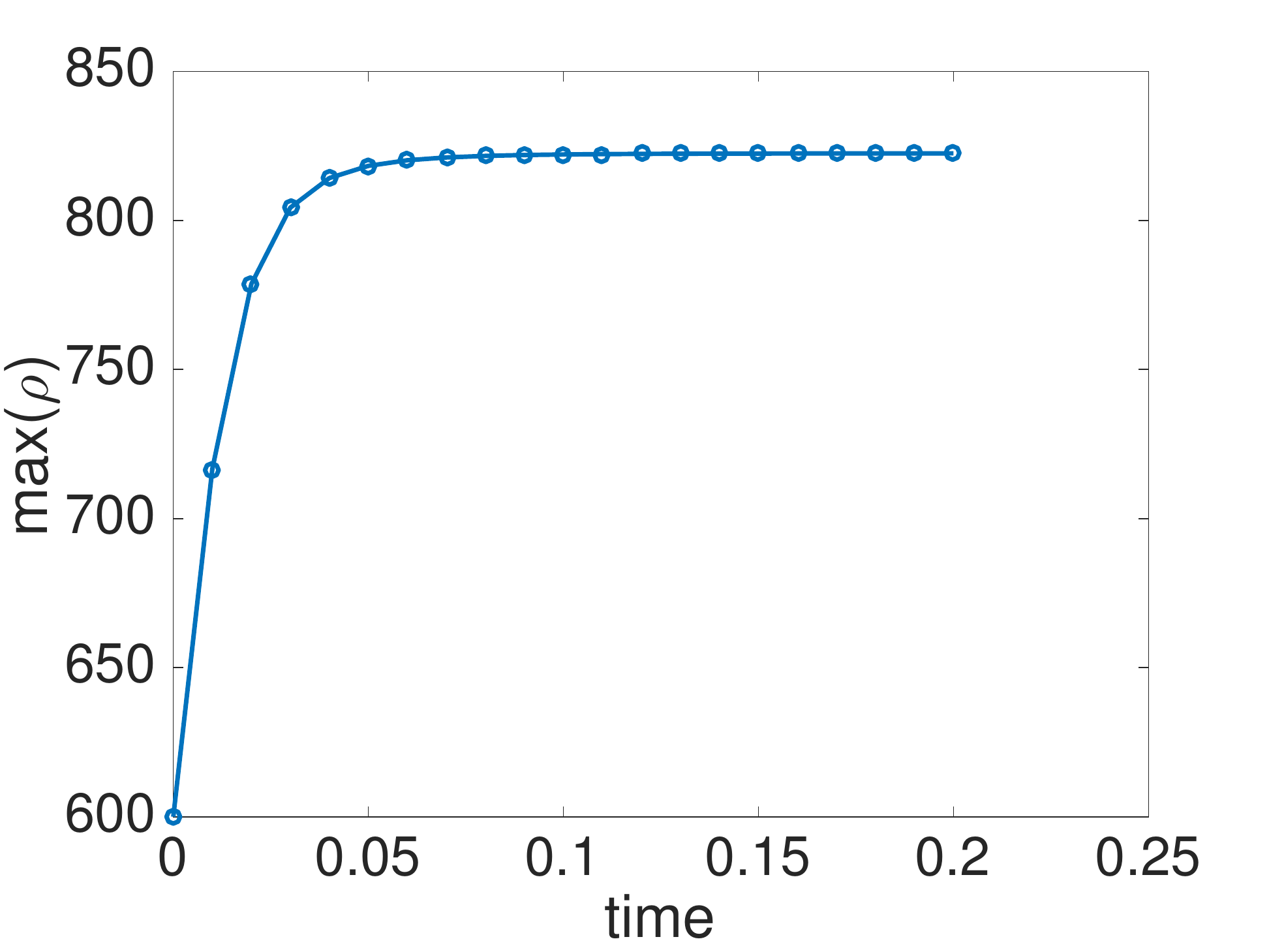}
\\
\includegraphics[width = 0.55\textwidth]{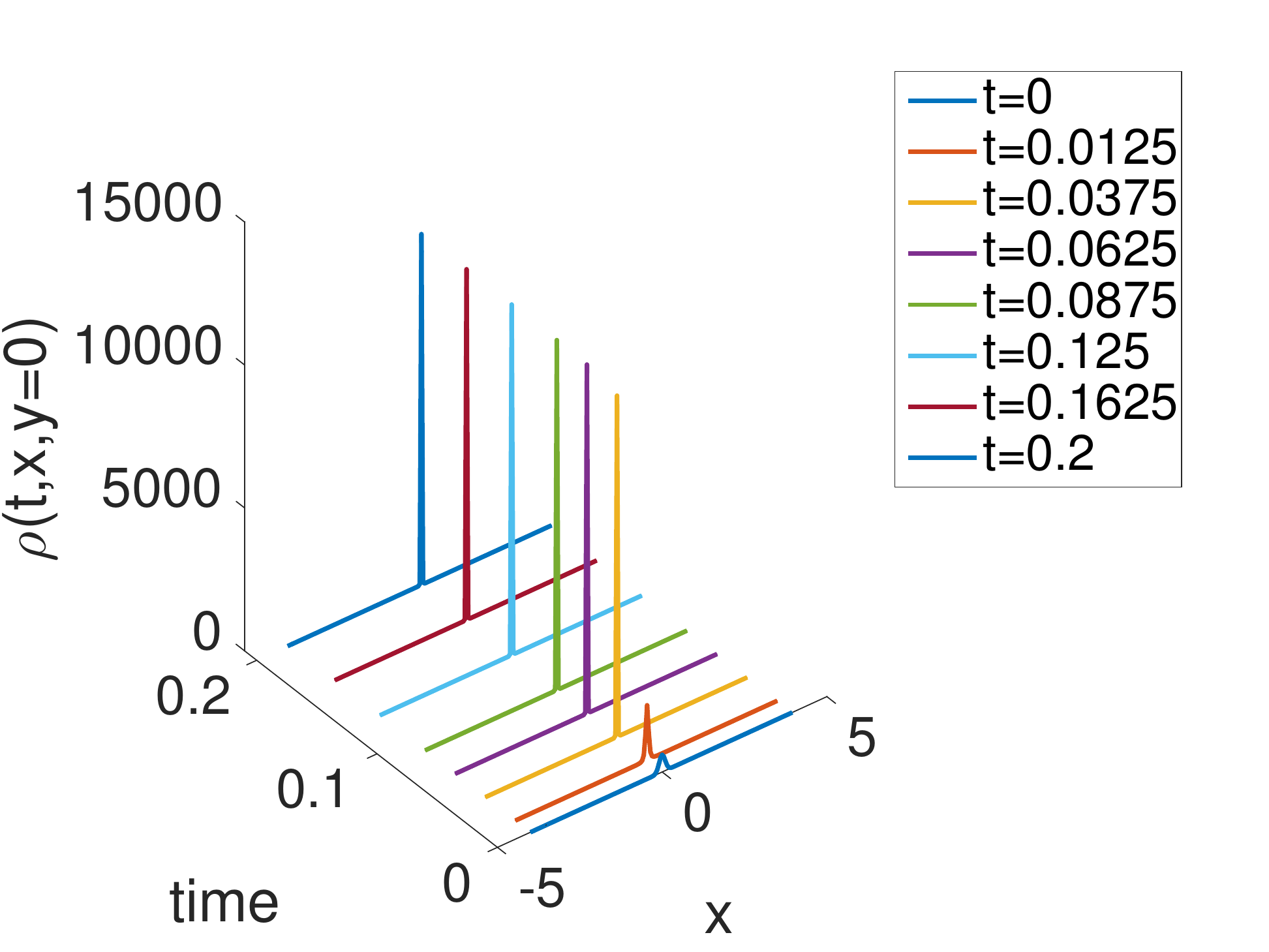}
\includegraphics[width = 0.45\textwidth]{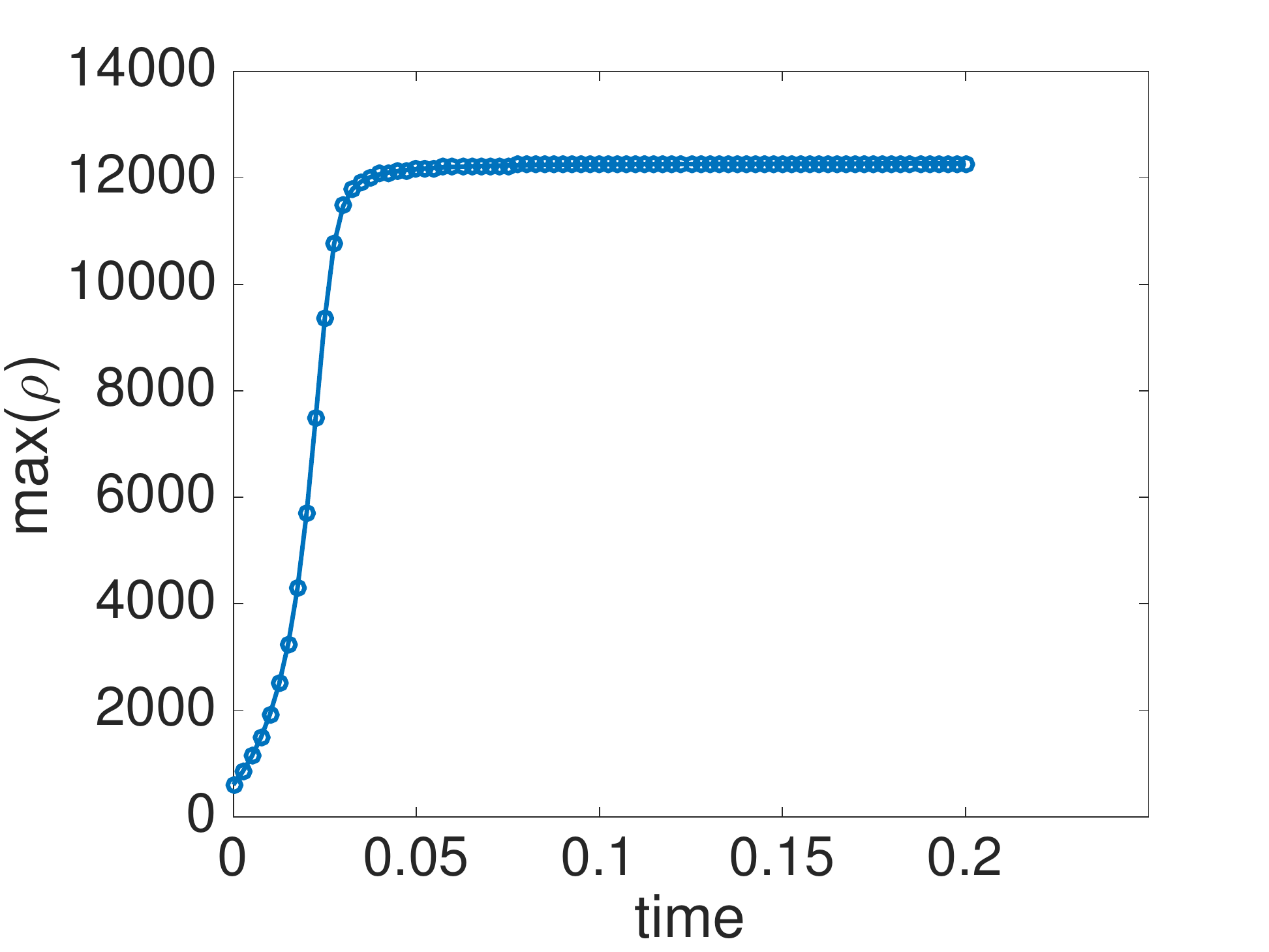}
\caption{Computation of the model in cartesian coordinates. $\eps = 0$. Left: the plot of  a slice of $\rho$ at $y=0$ at different times. Right: $\max(\rho)$ versus time. Top: $\Delta x = \Delta y = 0.2$. Bottom: $\Delta x = \Delta y = 0.05$. $\Delta t = \Delta x/20$.}
\label{fig:blowup_cart}
\end{figure}

\subsection{Subcritical case $m>1$}
This section is devoted to the subcritical case: $m>1$. Our focus will be the limit behavior when $m \rightarrow \infty$. First we consider the `square' initial data in polar coordinates
\begin{equation}
\rho(r,0) = \left\{ \begin{array}{cc}   \rho_0  & r^2\leq 0.1 \\ 0 & \text{elsewhere} \end{array}
\right. \quad c(r,0) = \frac{1}{2}\rho(r,0)
\end{equation} 
displayed in black curve in Fig. \ref{fig:porous_square}, where $\rho_0$ is a constant. The output time is $50$, long enough to produce a solution in steady state. On the left $\rho_0 = 1$, and one sees that as $m$ increases, the steady state solution transits from a smooth, fat bump to a tall sharp square that happens to be the same as the initial profile. This indicates that the steady state, as $m \rightarrow \infty$, tends to converge to the characteristic function with the length of the region determined by the total mass. We then choose $\rho_0 = 0.5$, and similar trends is observed on the right of Fig.~\ref{fig:porous_square}, which confirms the recent result that the steady state in the infinity limit of $m$ tends to the characteristic function; see \cite{CraigKimYao}. 
\begin{figure}[!h]
\includegraphics[width = 0.5\textwidth]{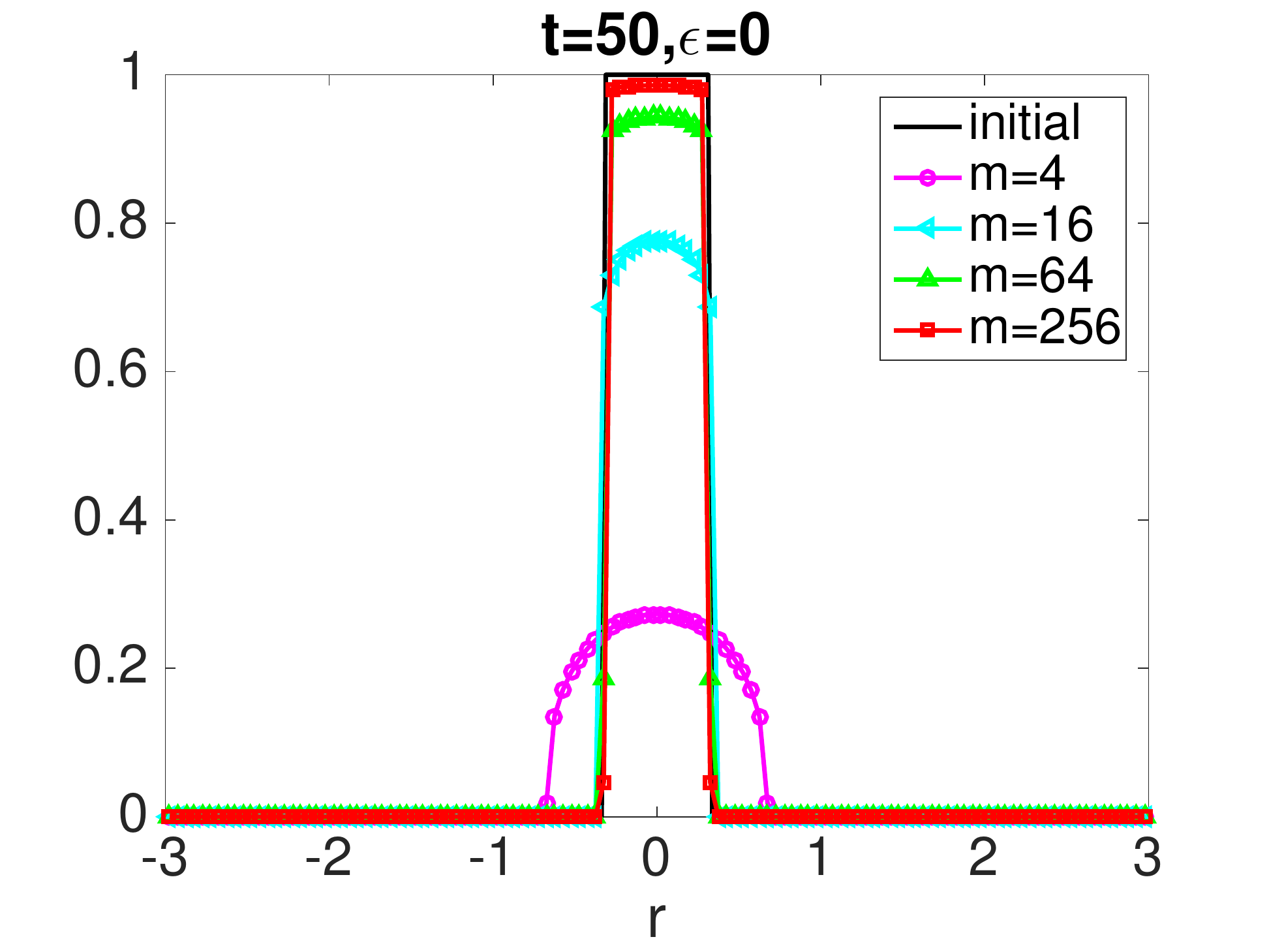}
\includegraphics[width = 0.5\textwidth]{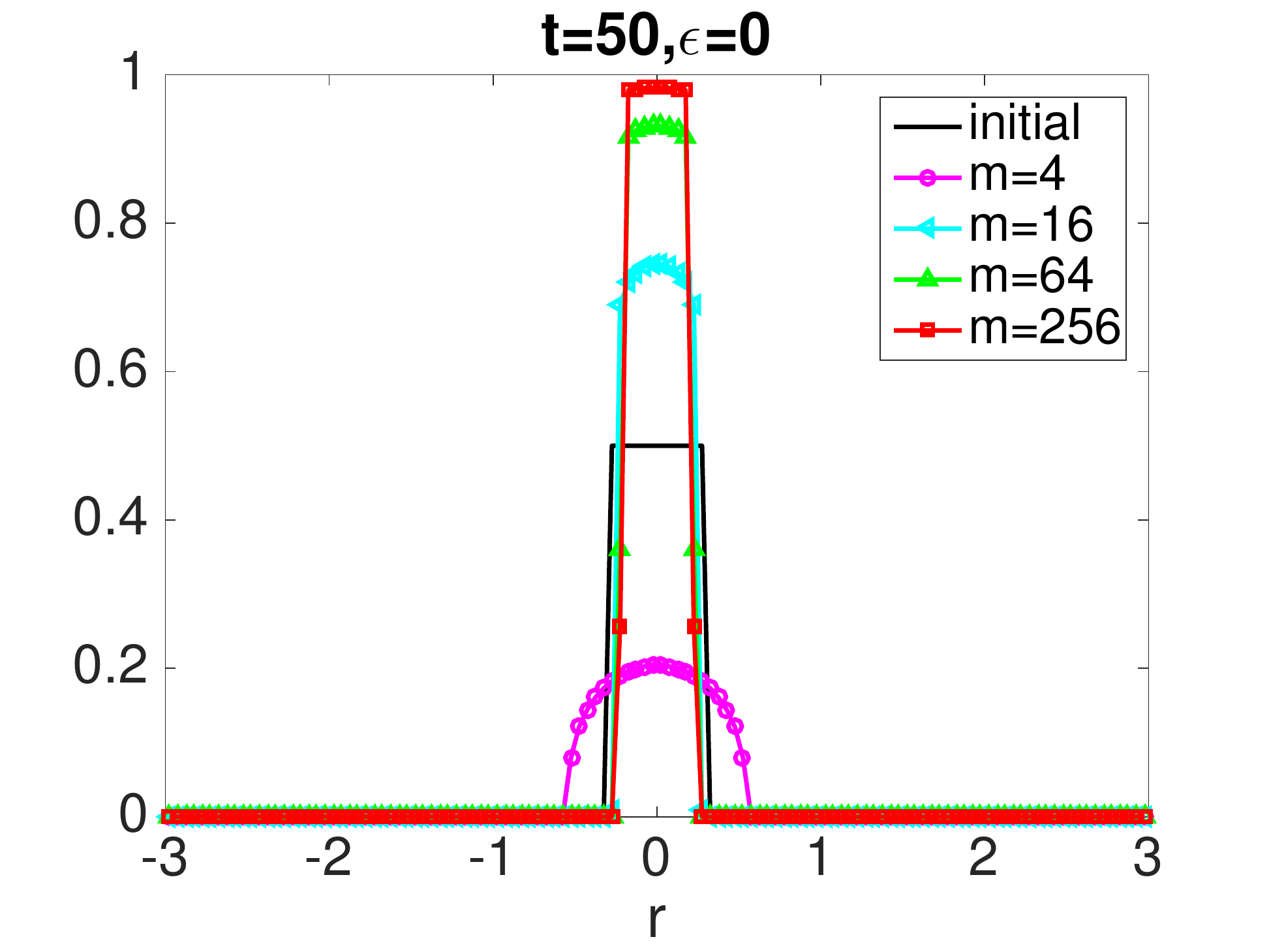}
\caption{Computation of the radial symmetric case (\ref{eq:rhorm}) (\ref{eq:crm}). $\eps = 0$, output time is $t=50$, and the plot of $\rho$ for different $m = 4$, $16$, $64$, and $256$. The black solid curves are the initial profile of $\rho$. Left: $\rho_0 = 1$. Right: $\rho_0 = 0.5$. Here $\Delta r = 0.05$, $\Delta t = 1.25e-4$.}
\label{fig:porous_square}
\end{figure}

To further check the shape of the steady state, we compute the problem in the cartesian grid. First we choose the initial data to be a double annulus, which is radially symmetric, as shown in the upper left of Fig. ~\ref{fig:porous_annulus}:
\begin{equation} \label{ic1}
\rho(x,y,0) = \left\{ \begin{array}{cc}  1  & 0.5< x^2 + y^2 <1 \textrm{ or } 1.5< x^2 + y^2 <2, 
\\ 0 & \textrm{ elsewhere, }
\end{array} \right.
\quad c(x,y,0) = \frac{1}{2} \rho(x,y,0).
\end{equation}
The next two figures display the profile of $\rho$ at later times, both in the top view and in $3D$ view. From these three figures, one sees that the shape of $\rho$, starting out with a double annulus, tends towards a thicker single annulus closer to the origin, and then towards a circle around the origin, which is just a $2D$ analog of the radial symmetric case in the previous test. The last picture in Fig.~\ref{fig:porous_annulus} plots one cross-section of $\rho$ at $x=0$, and the dynamics is the same as we expected. 

\begin{figure}[!h]
\includegraphics[width = 0.5\textwidth]{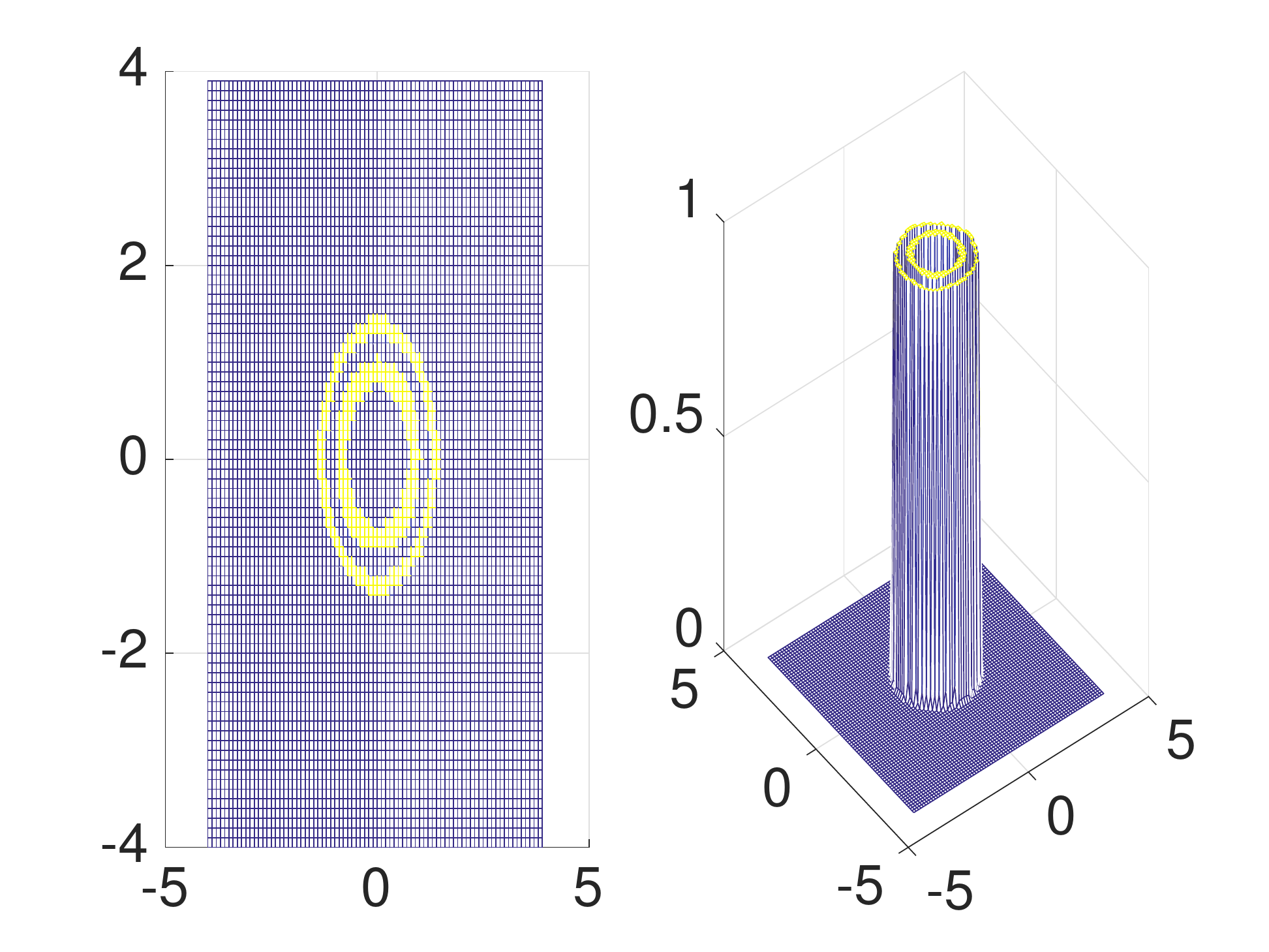}
\includegraphics[width = 0.5\textwidth]{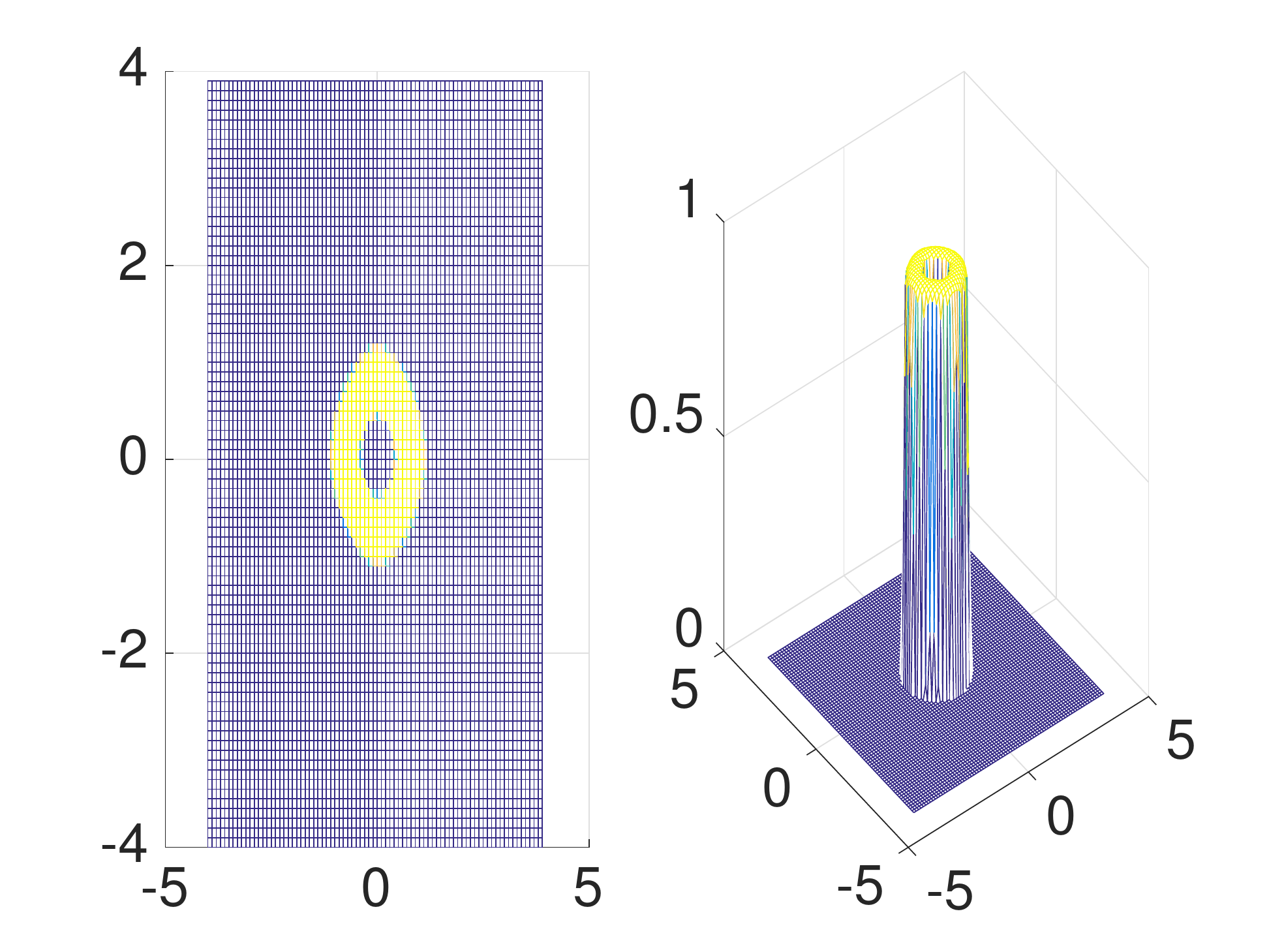}
\\ 
\includegraphics[width = 0.5\textwidth]{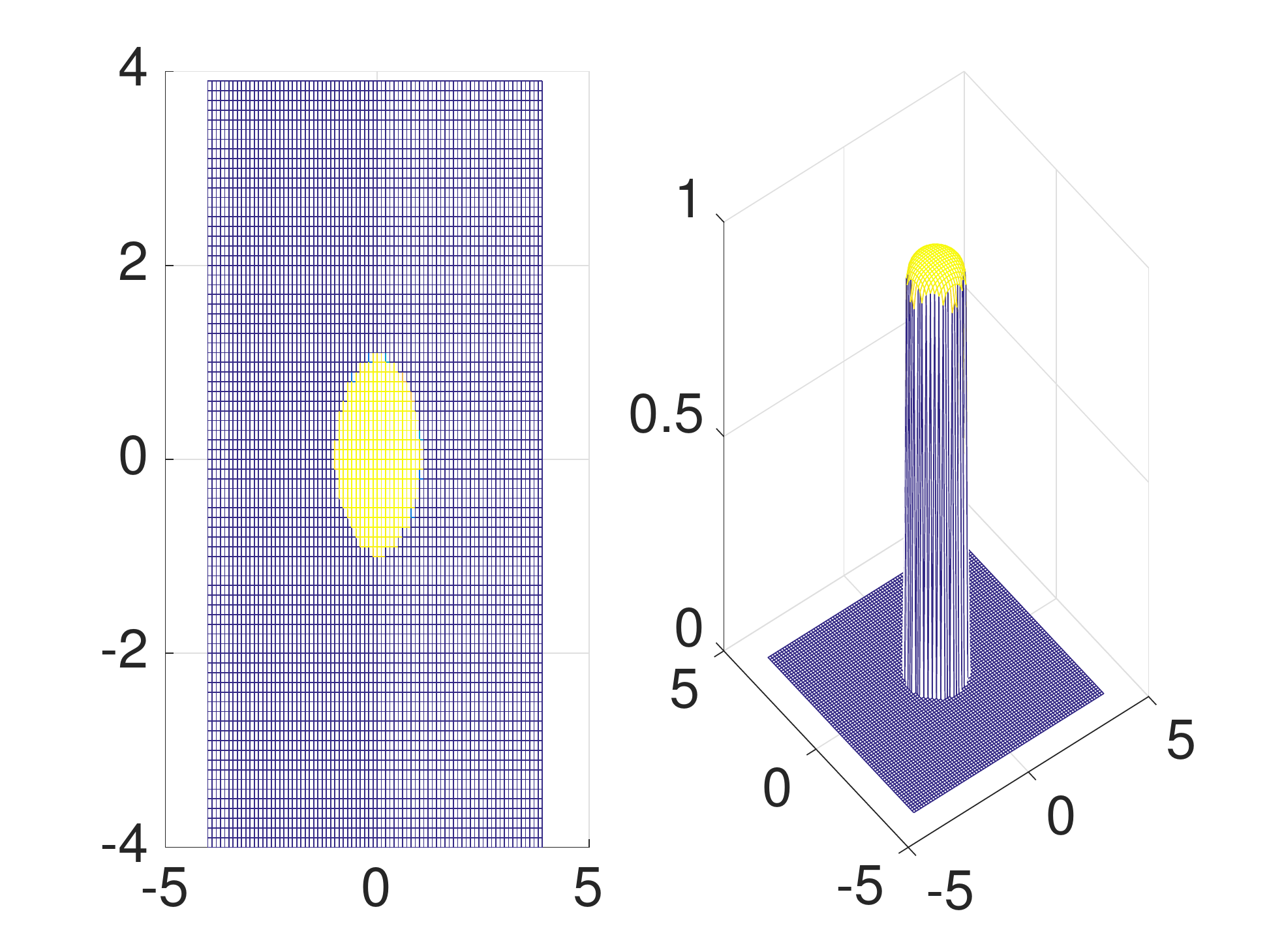}
\includegraphics[width = 0.5\textwidth]{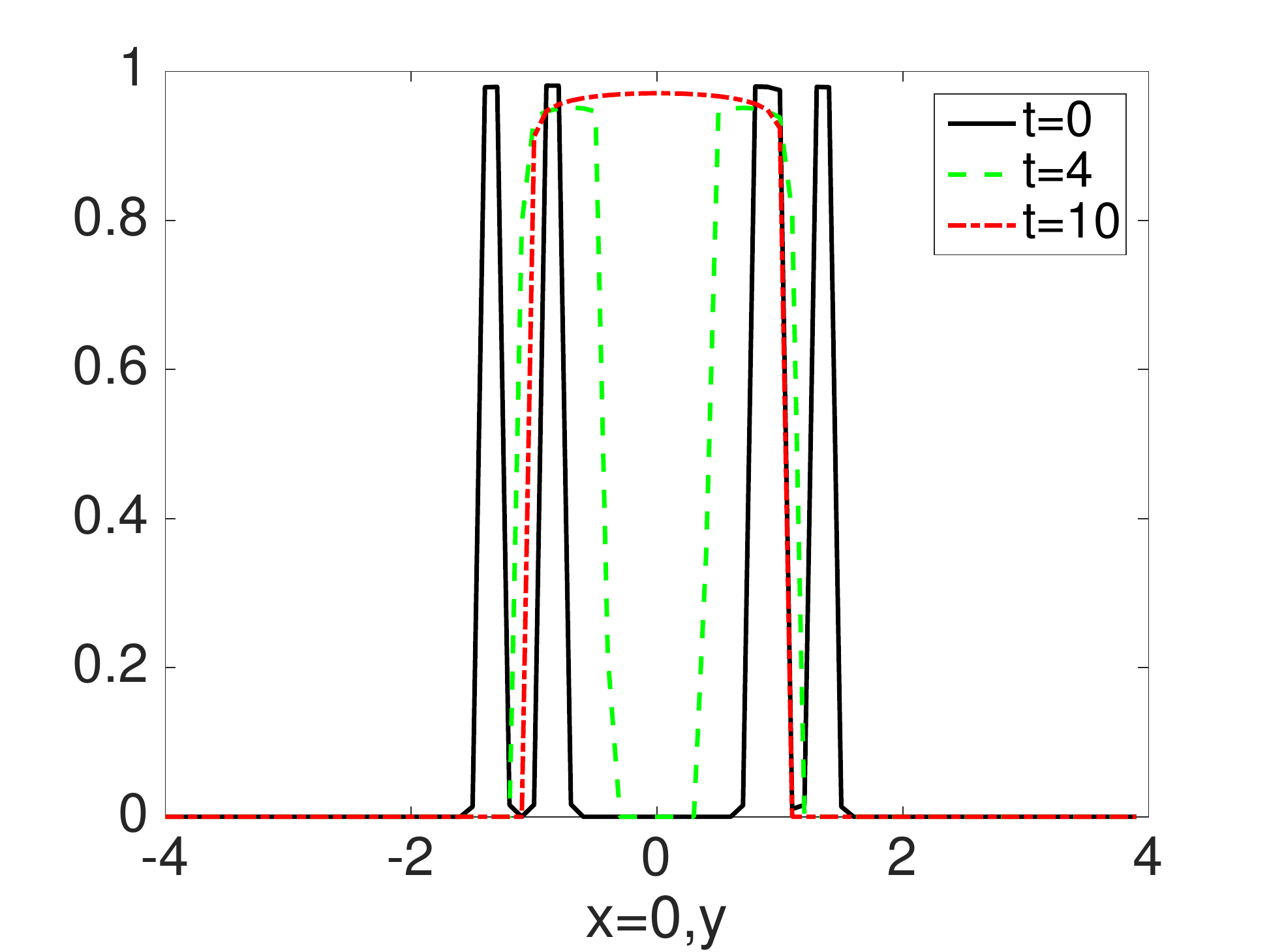}
\caption{Time evolution of model (\ref{eq:rhomn}) (\ref{eq:cmn}) with initial data (\ref{ic1}). $\eps = 0$, $m = 64$. Out put times are: $t=0$ (upper left), $t=4$ (upper right), $t = 10$ (lower left). Lower right: plot of one cross section of $\rho$ at $x=0$.  }
\label{fig:porous_annulus}
\end{figure}

In the end, we consider a case with non-radially symmetric initial data
\begin{align} \label{ic2}
\rho(x,y,0) &= \left\{ \begin{array}{cc}  1  & -1\leq x\leq -0.1, 0.1\leq y \leq 1  \textrm{ or } 0\leq x \leq1, -1\leq y \leq 0, 
\\ 0 & \textrm{ elsewhere, }
\end{array} \right.
\\ c(x,y,0) &= \frac{1}{2} \rho(x,y,0). \label{ic3}
\end{align}
The dynamics is displayed in Fig.~\ref{fig:porous_twobump}. 
\begin{figure}[!h]
\includegraphics[width = 0.5\textwidth]{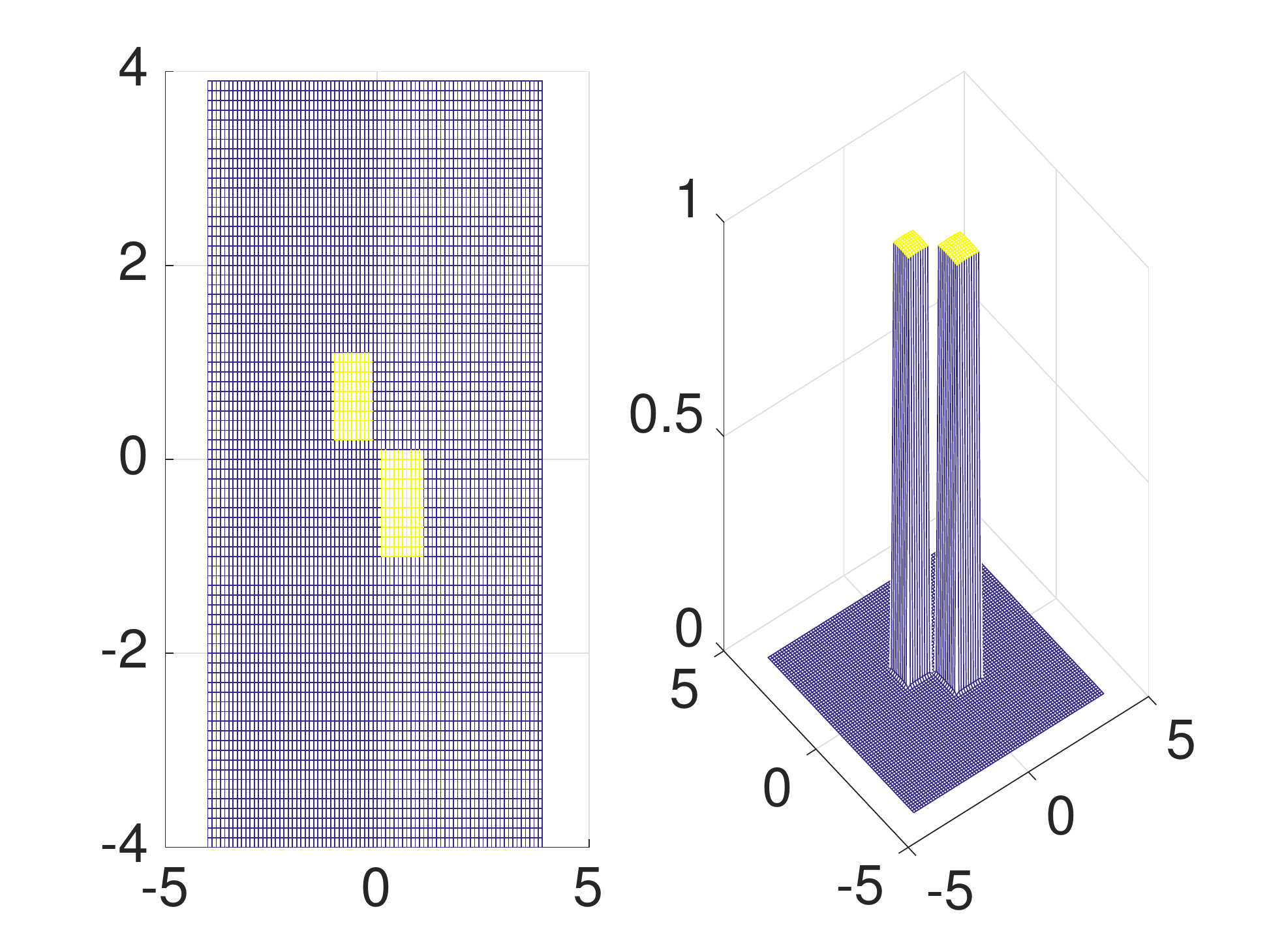}
\includegraphics[width = 0.5\textwidth]{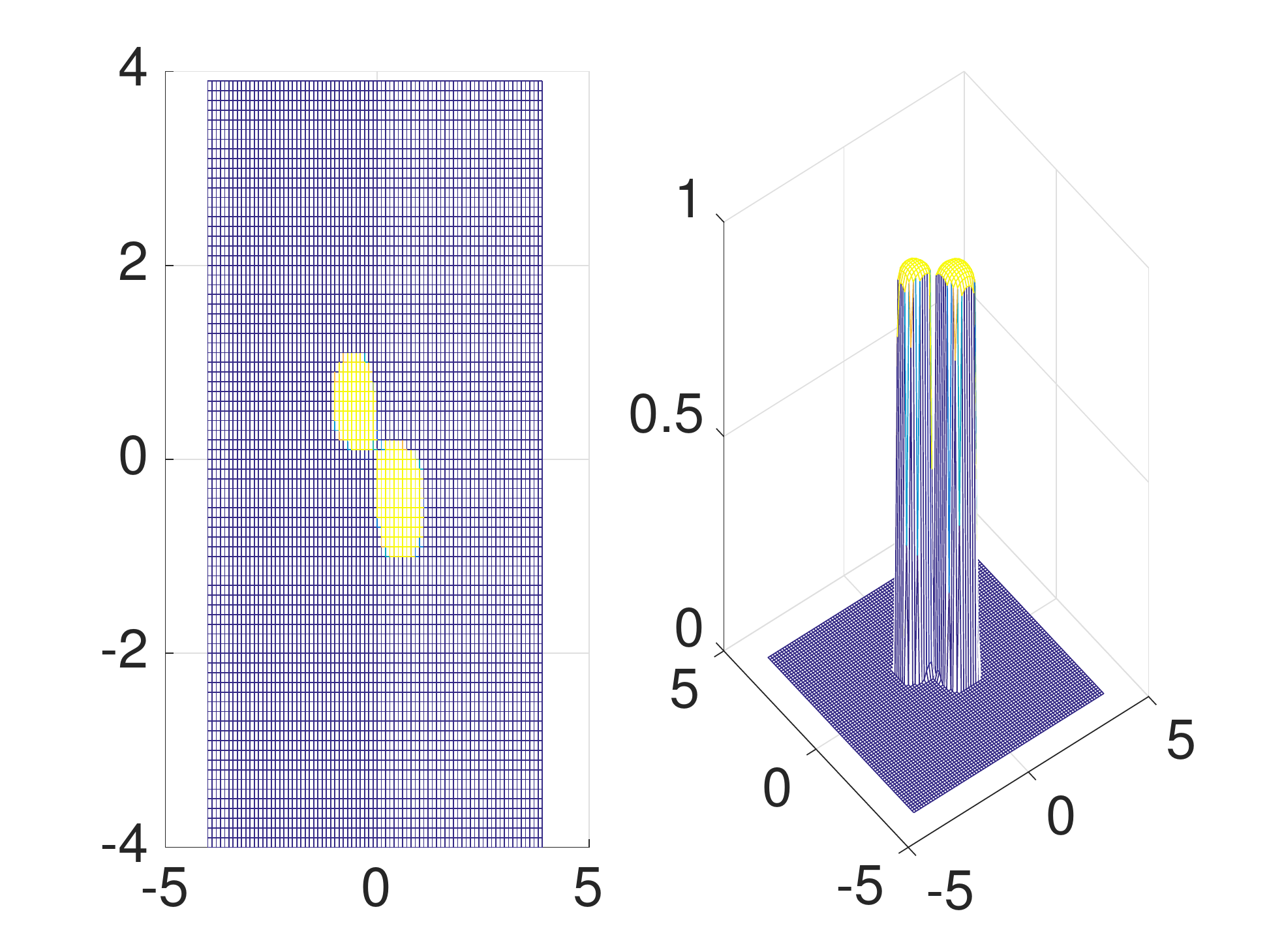}
\\ 
\includegraphics[width = 0.5\textwidth]{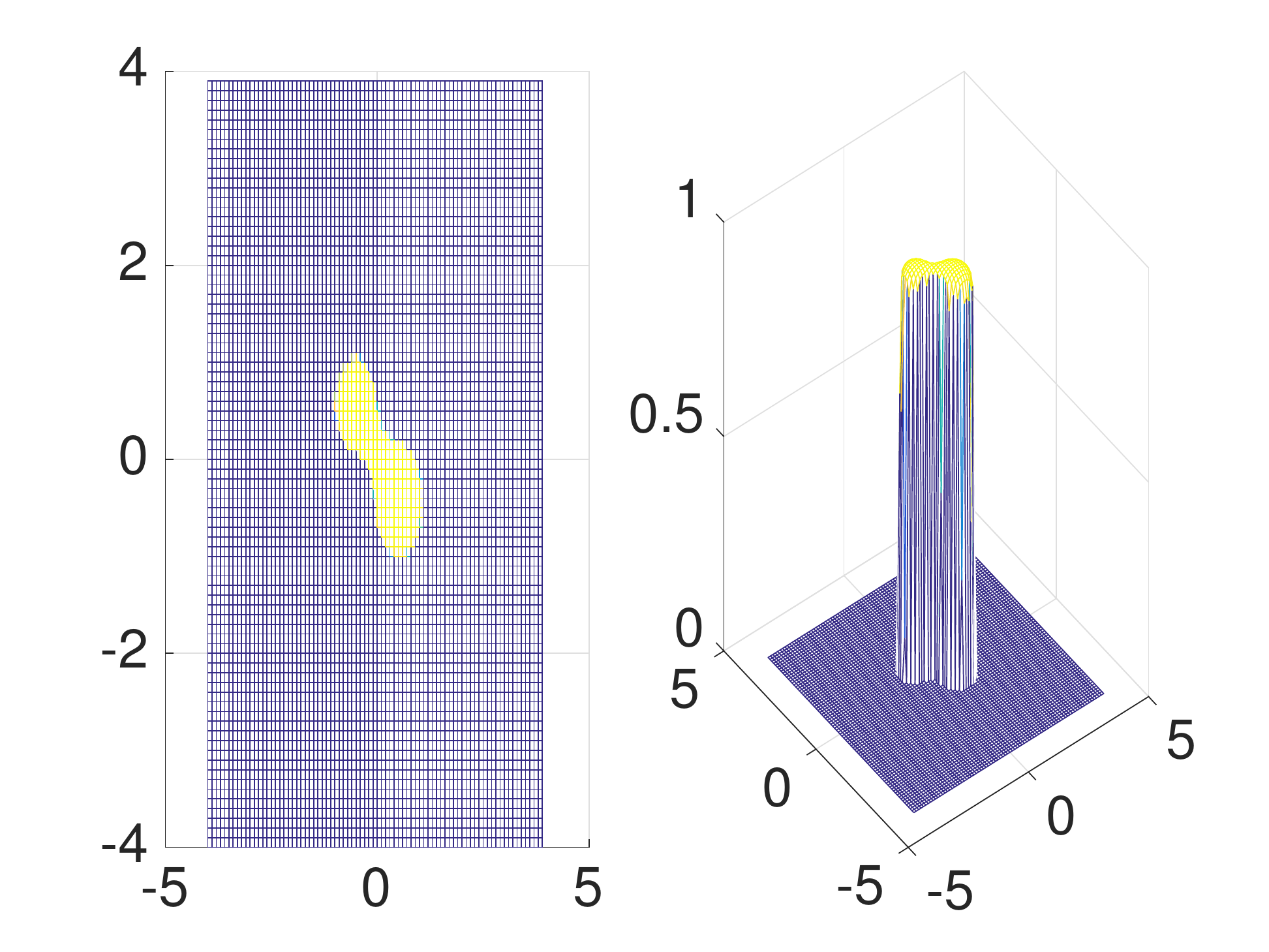}
\includegraphics[width = 0.5\textwidth]{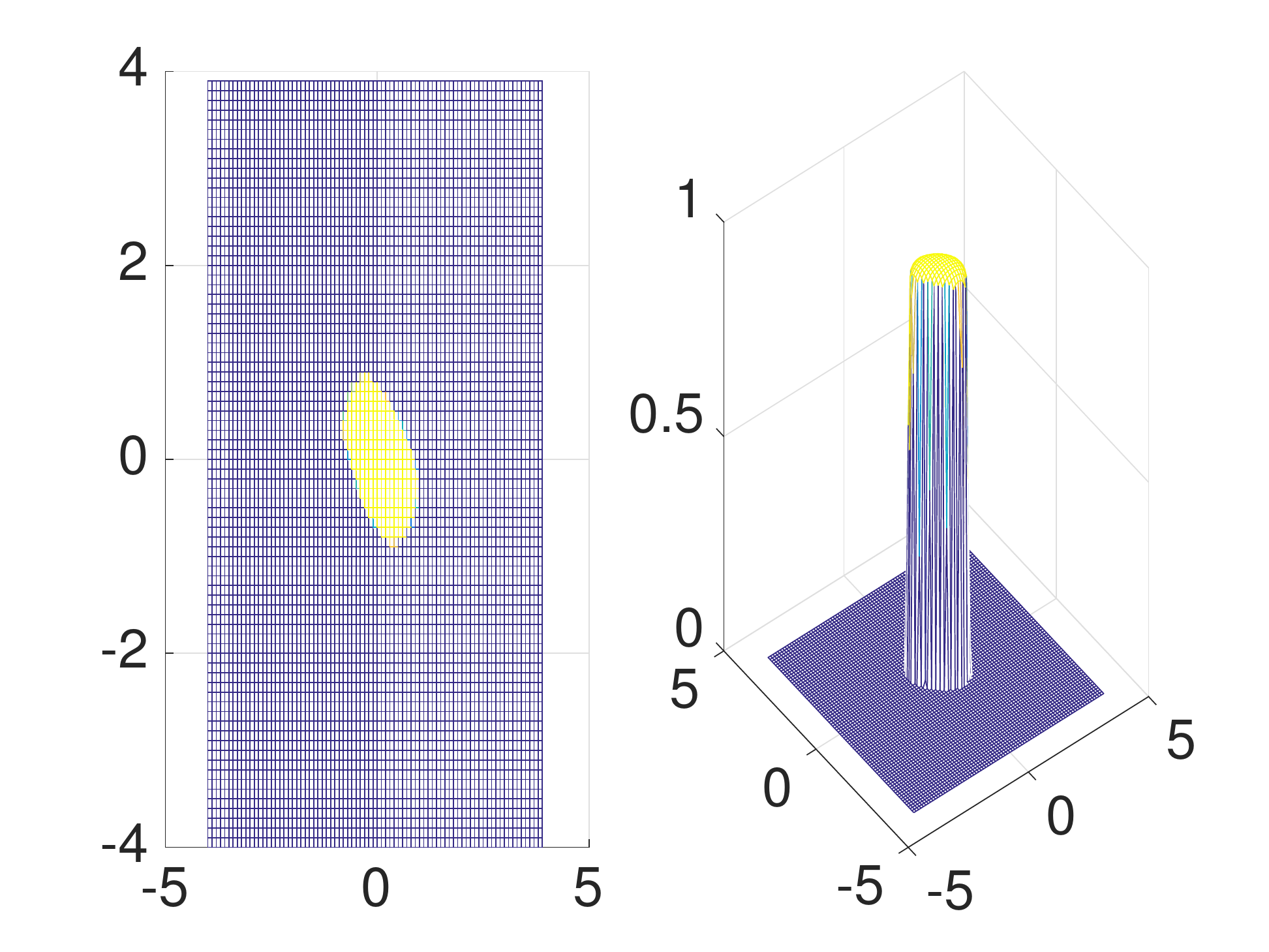}
\caption{Time evolution of model (\ref{eq:rhomn}) (\ref{eq:cmn}) with initial data (\ref{ic2}) (\ref{ic3}). Here $\eps = 0$, $m = 64$. Output times are: $t=0$ (upper left), $t=2$ (upper right), $t = 4$ (lower left) and $t=10$ (lower right). }
\label{fig:porous_twobump}
\end{figure}

\subsection{Two species}
In this section, we test our scheme on a two-species model \cite{KurganovLM2014}:
\begin{align}
\left\{ \begin{array}{cc} \partial_t \rho_1 + \chi_1 \nabla \cdot (\rho_1 \nabla c) = \mu_1 \Delta \rho_1 ,& 
\\  \partial_t \rho_2 + \chi_2 \nabla \cdot (\rho_2 \nabla c) = \mu_2 \Delta \rho_2, &  
\\  \eps c_t = D \Delta c + \alpha_1 \rho_1 + \alpha_2 \rho_2 - \beta c .& 
\end{array} \right. 
\end{align}
Here $\rho_1$ and $\rho_2$ denote the cell densities of the first and second species. $c$ is the concentration of the chemoattractant. $\mu_i$, $\chi_i$, $\alpha_i$ $i = 1, \ 2$, $\beta$, and $D$ are positive constants characterizing the cell diffusion, chemotactic sensitivities, production and consumption rates, and chemoattractant diffusion coefficient, respectively. A different combination of $\chi_1$, $\chi_2$ and the total mass of $\rho_1$ and $\rho_2$ would generate solutions with completely different behavior. Here we test our schemes in two specific combinations \cite{KurganovLM2014}, and other choices can be easily adapted and we omit the result here for simplicity. For both examples, we let $ \mu_2 = \gamma_1 = \gamma_2= \alpha_1 = \alpha_2 = D = 1$, and choose the computational domain to be $[-3,3]\times[-3,3]$.

{\bf Example 1:} First we choose $\chi_1 = 1$, $\chi_2 = 10$, $\mu_1 = 1 $, and initial condition is
\begin{equation}\label{ic5}
\rho_1(x,y,0) = \rho_2(x,y,0) = 50e^{-100(x^2 + y^2)}.
\end{equation}
In this case, we should have global existence in both $\rho_1$ and $\rho_2$. In Fig.~\ref{fig:2species1}, we plot $\rho_1$, $\rho_2$ and $c$ at $t=0.05$, and none of them displays any intensity of blowing up, yet $\rho_2$ has a sharper profile than $\rho_1$ since it has a large chemotactic sensitivity. 
\begin{figure}[!h]
\includegraphics[width = 0.32\textwidth]{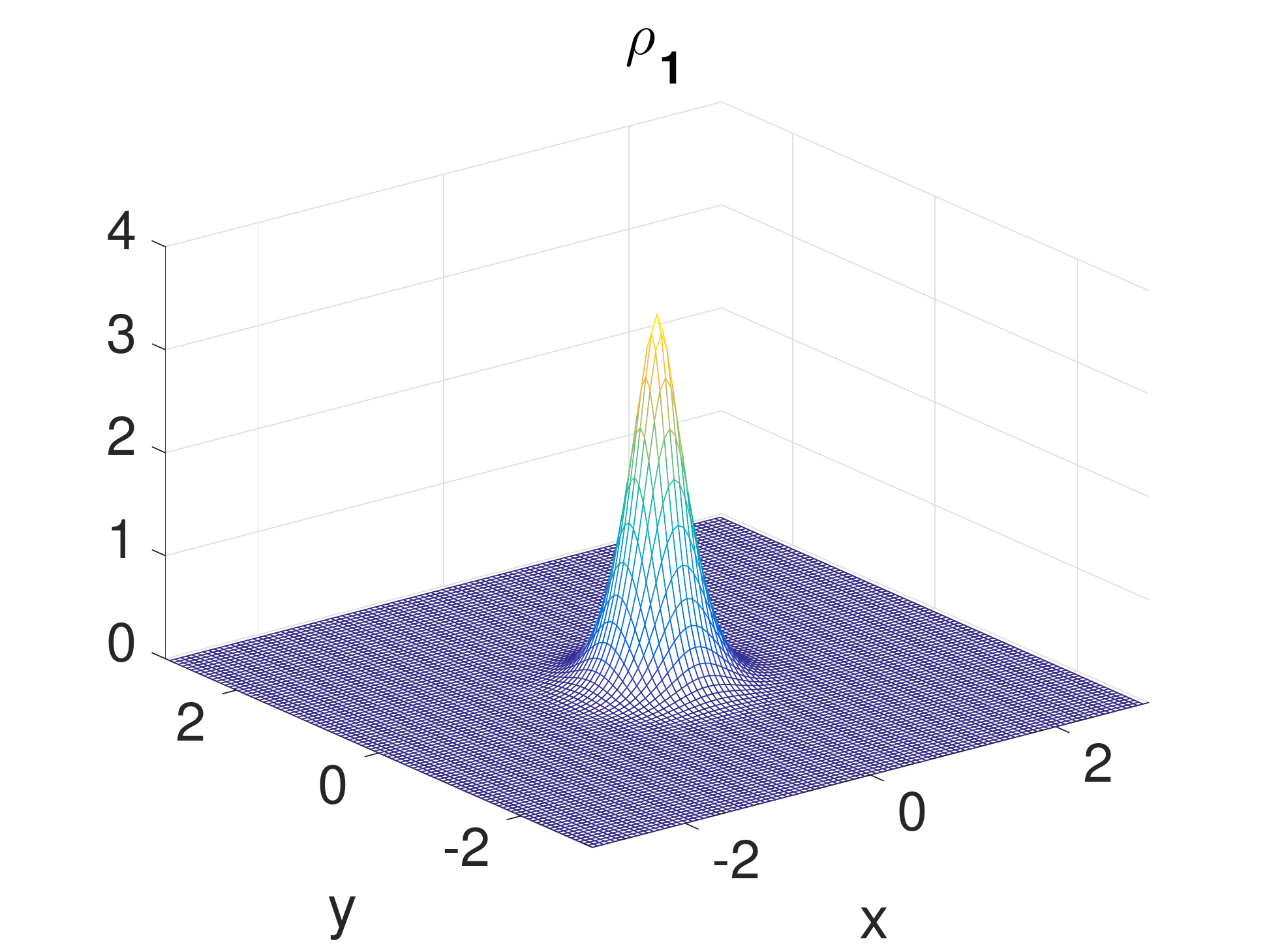}
\includegraphics[width = 0.32\textwidth]{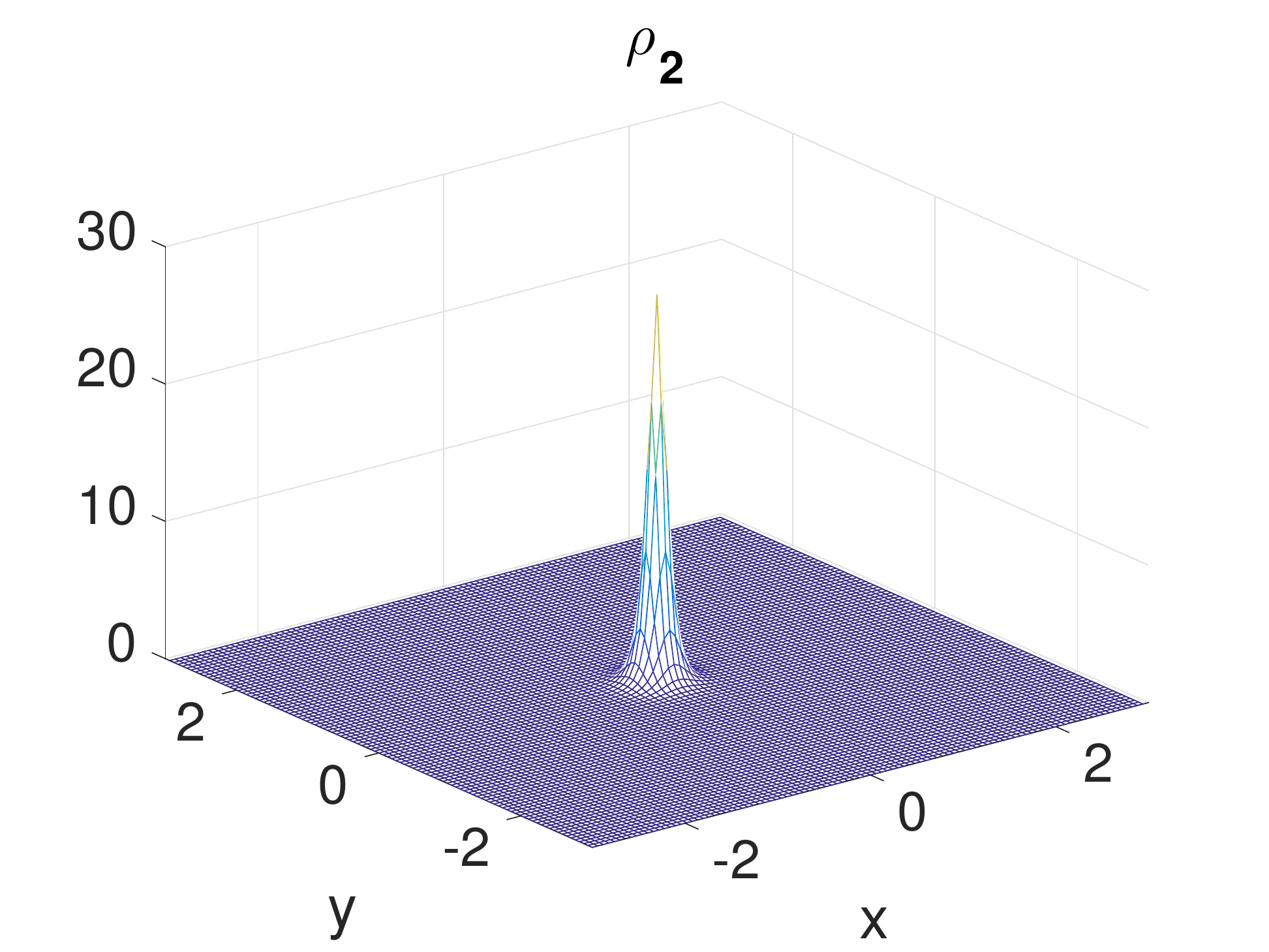}
\includegraphics[width = 0.32\textwidth]{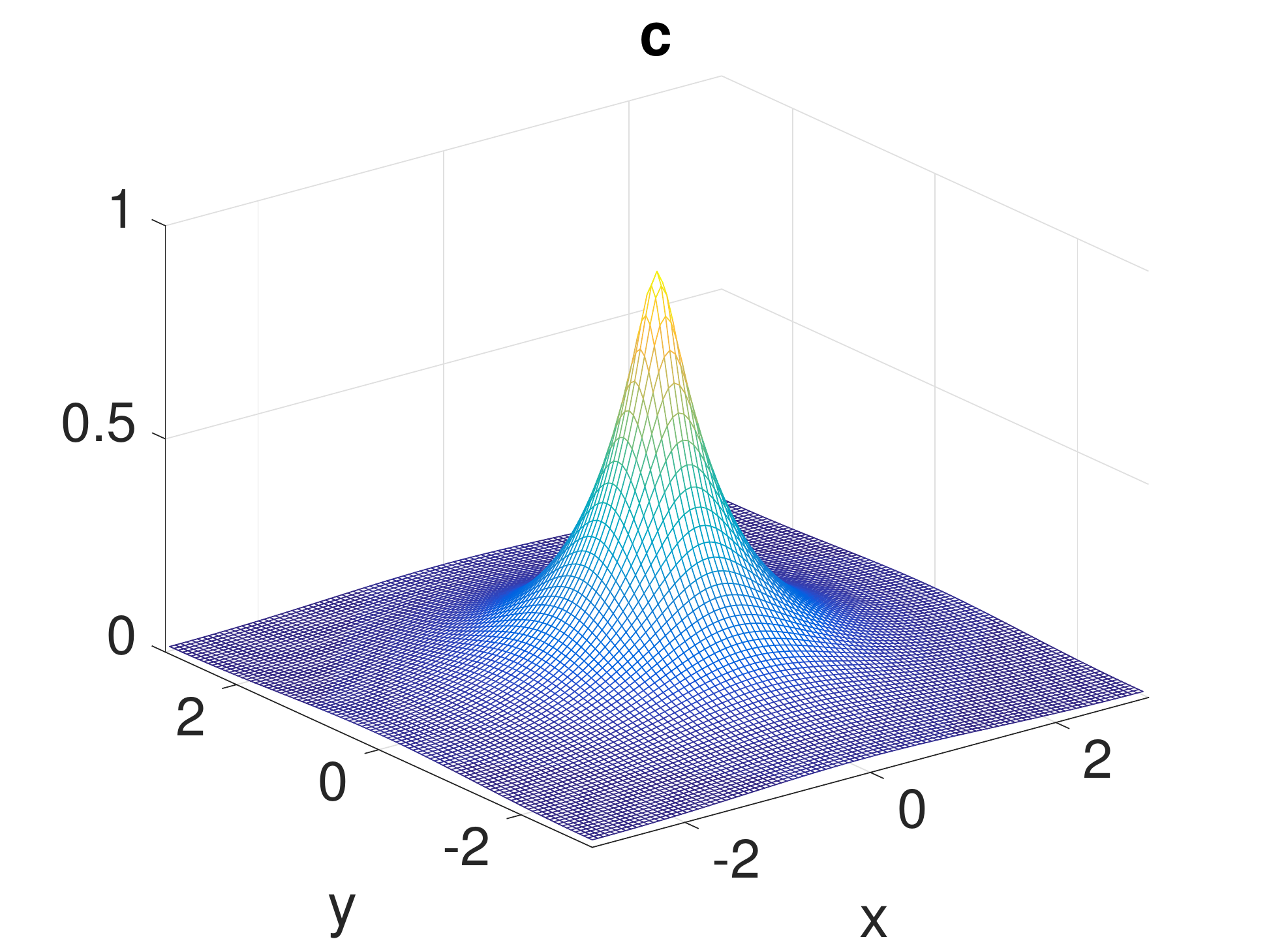}
\caption{Two sepeices: example1. $\rho_1$, $\rho_2$ and $c$ at time $t=0.05$, computed on $100\times 100$ uniform mesh. $\Delta t = \Delta x/10$.}
\label{fig:2species1}
\end{figure}

{\bf Example 2:} Next we consider $\chi_1 = 1$, $\chi_2 = 20$, $\mu_1 = 1 $, and use the same initial condition as in \eqref{ic5}. Here the problem falls into a subtle regime in which, according to \cite{EVC13}, should blow up $\rho_1$ and $\rho_2$ at different rate. Here we examine the profile of $\rho_1$ and $\rho_2$ at time $t = 0.05$ with two different mesh sizes, and it is seem from Fig.~\ref{fig:2species2} that both densities blow up at the order of $\mathcal{O}\left( \frac{1}{\Delta x^2}\right)$, but $\rho_2$ blow up faster than $\rho_1$.
\begin{figure}[!h]
\includegraphics[width = 0.5\textwidth]{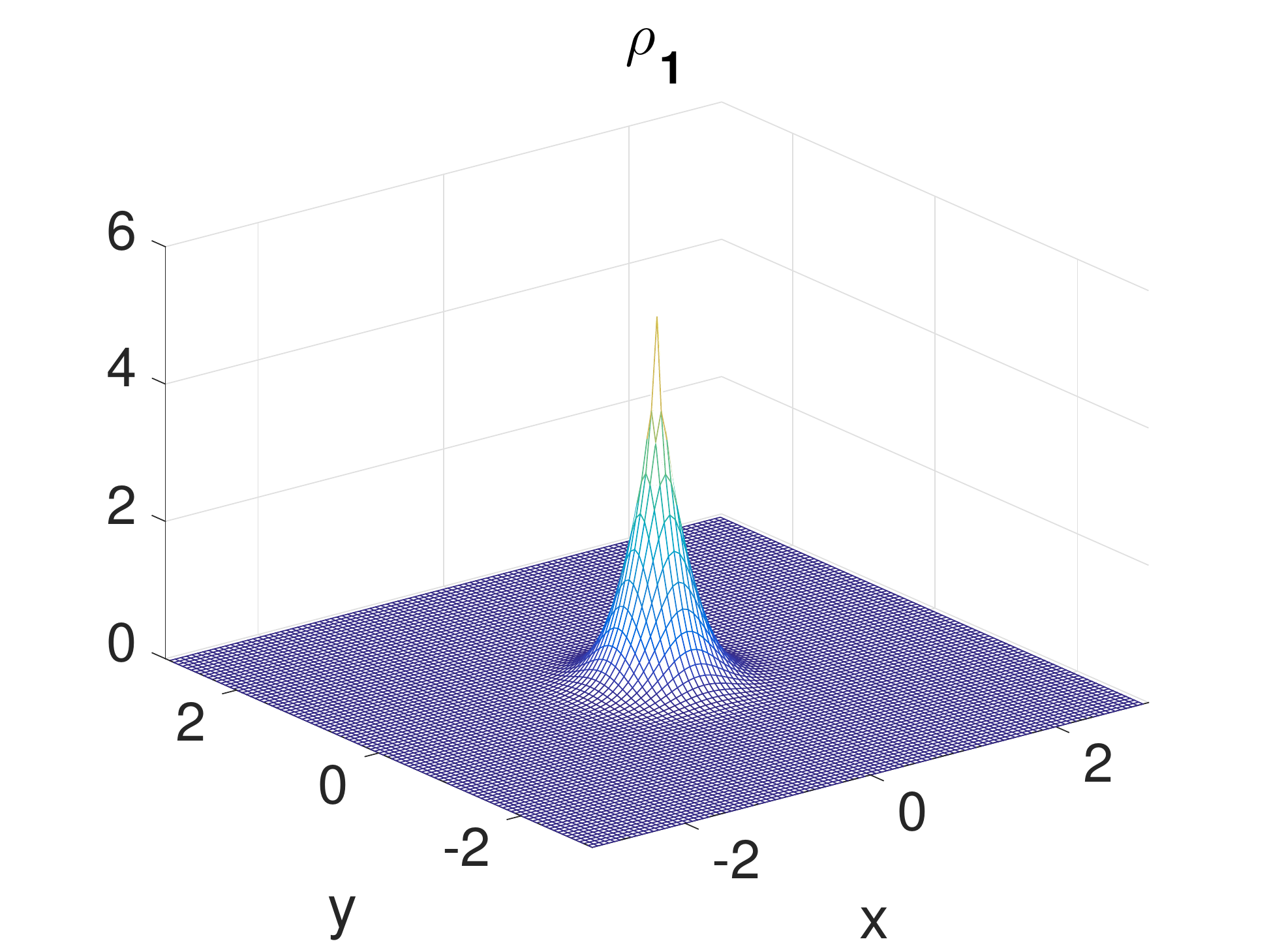}
\includegraphics[width = 0.5\textwidth]{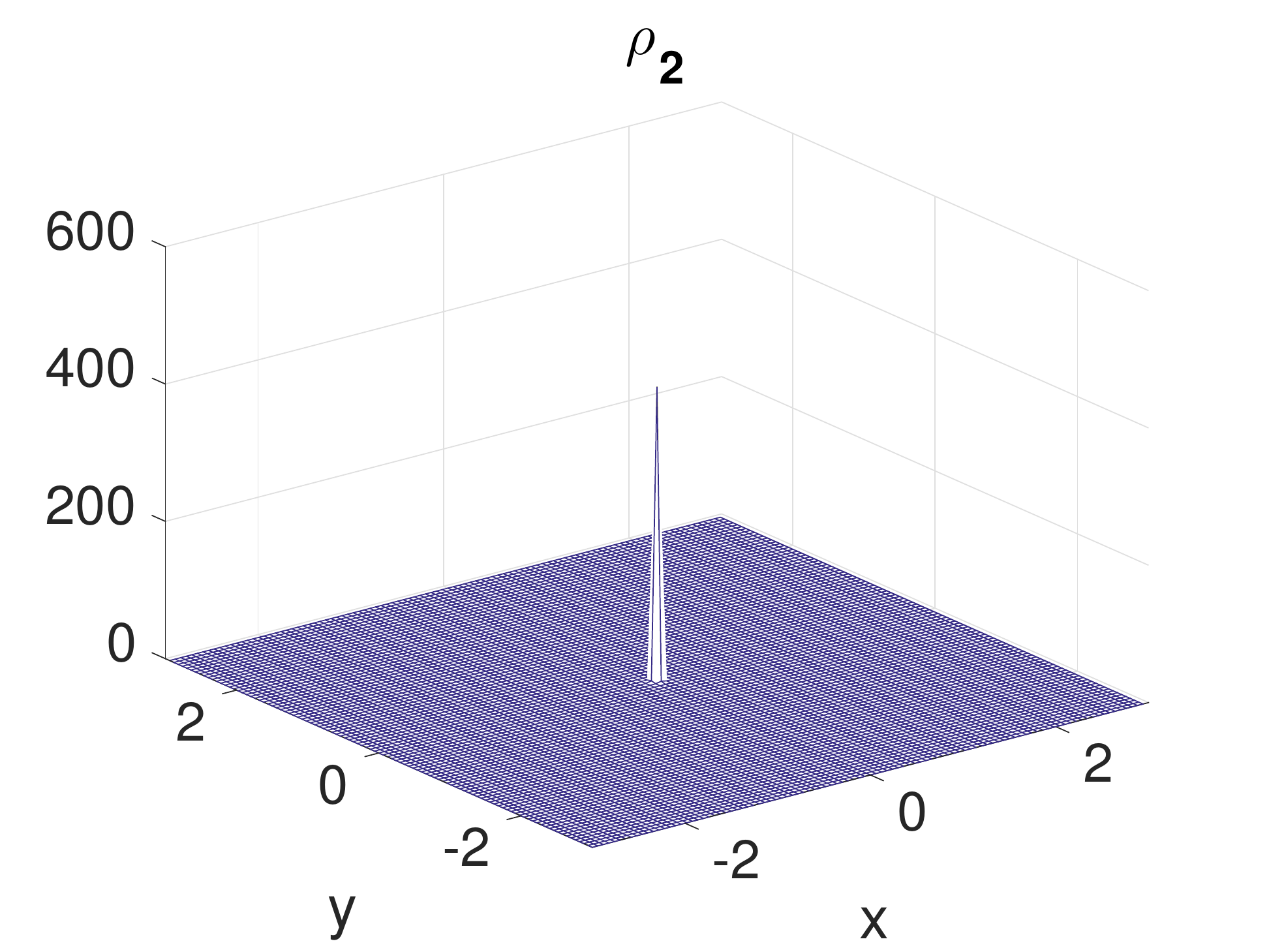}
\\
\includegraphics[width = 0.5\textwidth]{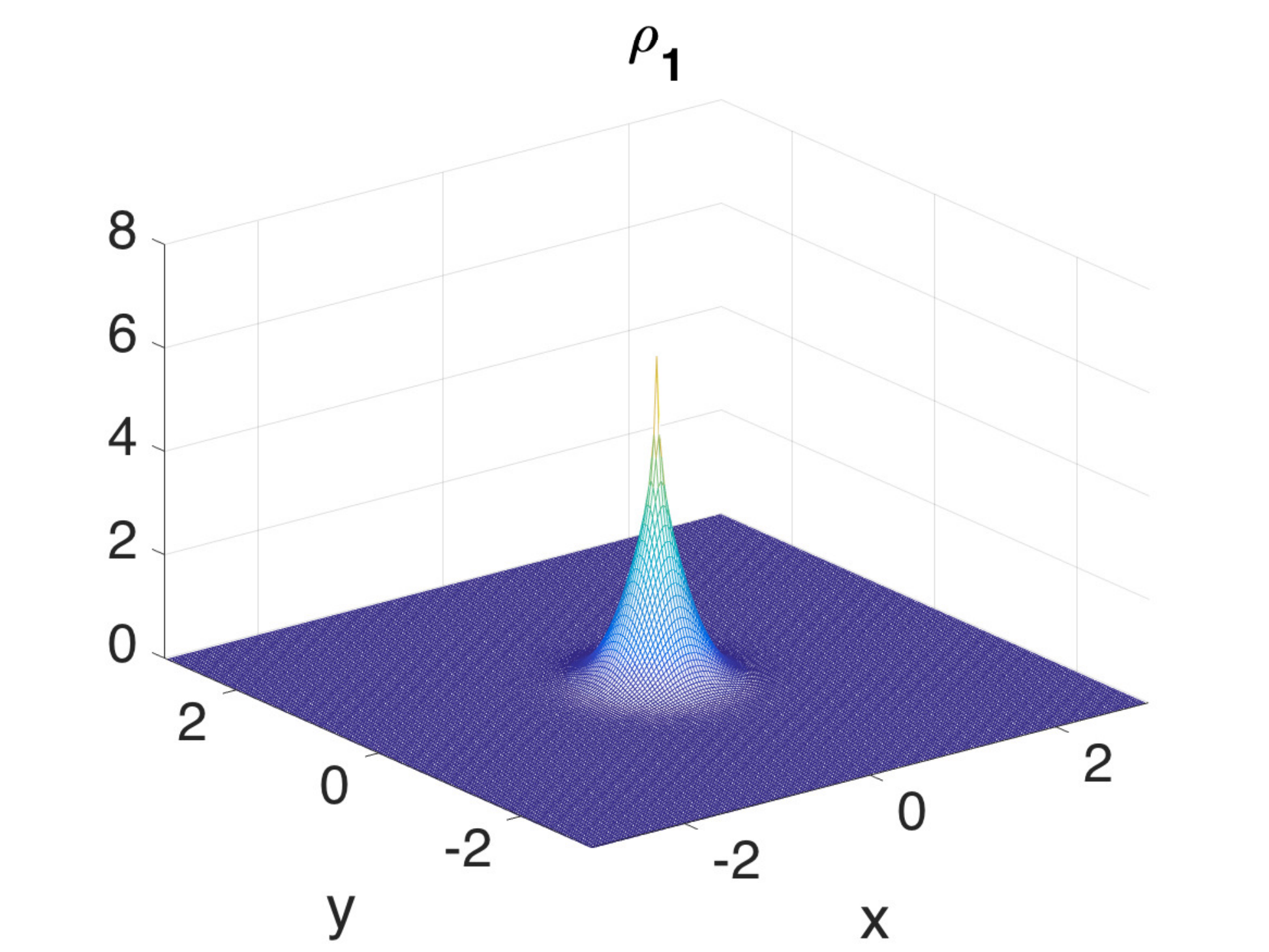}
\includegraphics[width = 0.5\textwidth]{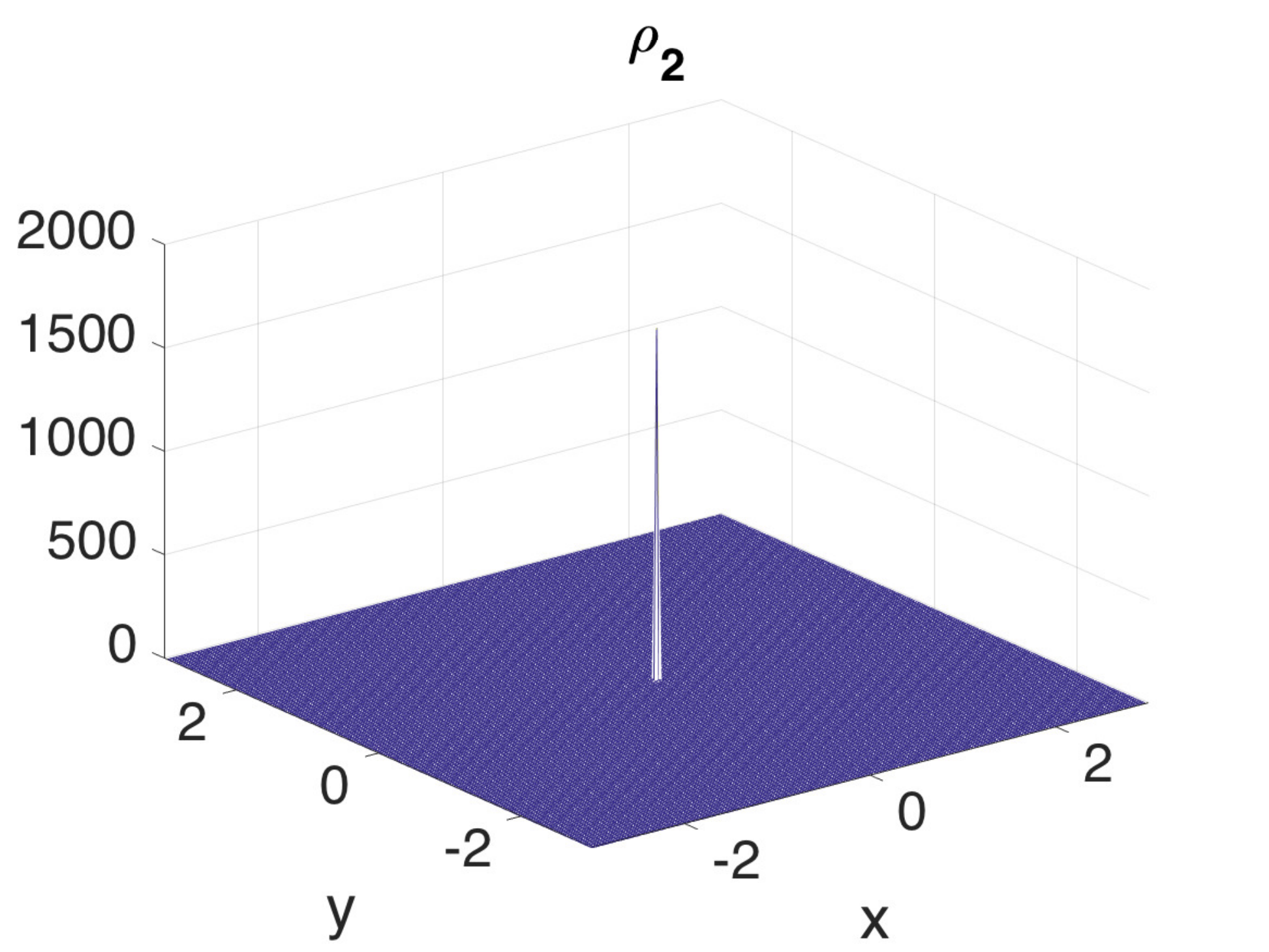}
\caption{Two sepeices: example2. $\rho_1$ and $\rho_2$ at time $t=0.05$, computed on $100\times 100$ uniform mesh (upper) and $200\times 200$ mesh (lower). $\Delta t = \Delta x/10$.}
\label{fig:2species2}
\end{figure}

\section*{Acknowledgments}
 The authors would like to thank Jianfeng Lu, Francis Filbet and Yao Yao for helpful discussions. J. Liu is partially supported by KI-Net NSF
RNMS grant No. 1107291 and NSF grant DMS 1514826. L. Wang is partially supported by NSF grant DMS 1620135. Z. Zhou is partially supported by RNMS11-07444 (KI-Net).


\begin{thebibliography}{9}

\bibitem{BianLiu2013}
Bian, S. and Liu, J.-G.,
  \emph{Dynamic and steady states for multi-dimensional Keller-Segel model with diffusion exponent $m>0$},
Comm. Math. Phys. , 
323(3),
  2013,
  1017-1070.
  
    \bibitem{BCKKLL2014}
Blanchet, A., Carrillo, J.A., Kinderlehrer, D., Kowalczyk, M., Laurencot, P. and Lisini, S.,
  \emph{A hybrid variational principle for the Keller-Segel system in $\mathbb R^ 2$},
arXiv preprint arXiv:1407.5562.
  
  \bibitem{BDP2006}
Blanchet, A., Dolbeault, J. and  Perthame, B.,
  \emph{Two-dimensional Keller--Segel model: optimal critical mass and qualitative
properties of the solutions},
Electron. J. Differential Equations , 
44,
  2006,
  1-33.
  
\bibitem{BrennerCKSV}
 Brenner, M. P., Constantin, P., Kadanoff, L. P., Schenkel, A. and Venkataramani, S. C.,
  \emph{Diffusion, attraction and collapse},
Nonlinearity, 
12(4),
  1999,
  1071.
  
  \bibitem{CalvezCorrias2008}
Calvez, V. and Corrias, L.,
  \emph{The parabolic-parabolic Keller--Segel model in $\R^2$},
Commun. Math. Sci., 
6(2),
  2008,
  417-447.
  
  \bibitem{CCH15}
  Carrillo, J. A., Chertock, A., Huang, Y.
 \emph{A Finite-Volume Method for Nonlinear Nonlocal Equations with a Gradient Flow Structure},
 Commun. Comput. Phys. 17, No. 1, 233--258.
  
    \bibitem{CarrilloHVY}
Carrillo, J. A., Hittmeir, S., Volzone, B. and Yao, Y.,
  \emph{Nonlinear aggregation-diffusion equations: Radial symmetry and long time asymptotics},
arXiv preprint arXiv:1603.07767.



  \bibitem{CarrilloYan2013}
Carrillo, J. A. and Yan, B.,
  \emph{ An asymptotic preserving scheme for the diffusive limit of kinetic systems for chemotaxis},
Multiscale Model. Simul, 
11(1),
  2013,
  336-361.

  \bibitem{ChengGamba2012}
Cheng, Y. and Gamba, I. M.,
  \emph{Numerical study of one-dimensional Vlasov--Poisson equations for infinite homogeneous stellar systems},
Commun. Nonlinear Sci. Numer. Simul, 
17(5),
  2012,
  2052-2061.

\bibitem{ChertockKurganov2008}
  Chertock, A. and Kurganov, A.,
  \emph{A second-order positivity preserving central-upwind scheme for chemotaxis and haptotaxis models},
Numer. Math., 111(2),
  2008,
  169-205.
  
\bibitem{ChertockKurganovWangWu2012}
  Chertock, A., Kurganov, A., Wang, X. and Wu, Y.,
  \emph{On a chemotaxis model with saturated chemotactic flux},
Kinet. Relat. Models, 
5(1),
  2012,
  51-95.
  
      \bibitem{CongLiu}
Cong, W. and Liu, J.-G., 
  \emph{Uniform $L^\infty$ boundedness for a degenerate
parabolic-parabolic Keller-Segel model},
preprint.
  
  \bibitem{CraigKimYao}
  Craig, K., Kim, I. and Yao, Y.,
  \emph{Congested aggregation via Newtonian interaction},
arXiv preprint arXiv:1603.03790.
  
  
  \bibitem{EVC13}
  E. Espejo and K. Vilches and C. Conca,
  \emph{Sharp condition for blow-up and global existence in a two species chemotactic {K}eller-{S}egel system in $\mathbb{R}^2$},
European J. Appl. Math., 
24,
  2013,
  297--313.
  
   \bibitem{EpshteynKurganov2008}
Epshteyn, Y. and Kurganov, A.,
  \emph{New interior penalty discontinuous Galerkin methods for the Keller-Segel chemotaxis model},
SIAM J. Numer. Anal., 
47(1),
  2008,
  386-408.    
  
  \bibitem{Filbet2006}
  Filbet, F.,
  \emph{A finite volume scheme for the Patlak-Keller-Segel chemotaxis model},
Numer. Math., 
104(4),
  2006,
  457-488.
  
    \bibitem{GLWS2014}
 Guan, Z., Lowengrub, J. S., Wang, C. and Wise, S. M.,
  \emph{Second order convex splitting schemes for periodic nonlocal Cahn-Hilliard and Allen-Cahn equations},
J. Comput. Phys., 
277,
  2014,
  48-71.
  
\bibitem{JW11}
Jin, S. and Wang, L.,
\emph{An asymptotic-preserving scheme for the {V}lasov-{P}oisson-{F}okker-{P}lanck system in the high field regime}
Acta Mathematica Scientia, 
31B(6),
2011,
2219--2232.

\bibitem{KellerSegel1971}
Keller, E.F. and Segel, L.A.,
\emph{Traveling bands of chemotactic bacteria: a theoretical analysis}
J. Theoret. Biol., 
30(2),
1971,
235--248.



\bibitem{KurganovLM2014}
 Kurganov, A. and Lukacova-Medvidova, M.,
  \emph{Numerical study of two-species chemotaxis models},
Discrete Contin. Dyn. Syst. Ser. B, 
19(1),
  2014,
  131-152.
  
  \bibitem{LiShuYang}
Li, X. H., Shu, C.-W. and Yang, Y.,
  \emph{Local discontinuous Galerkin method for the Keller-Segel chemotaxis model},
submitted to IMA J. Numer. Anal. .

\bibitem{LiuWang2016}
 Liu, J.-G. and Wang, J.,
  \emph{Refined hyper-contractivity and uniqueness for the Keller--Segel equations},
Appl. Math. Lett. A, 
52,
  2016,
  212-219.
  
  \bibitem{Patlak1953}
Patlak, C. S.,
  \emph{ Random walk with persistence and external bias},
Bull. Math. Biol, 
15(3),
  1953,
  311-338.

\bibitem{Perthame}
 Perthame, B.,
  \emph{Transport equations in biology},
Springer Science \& Business Media, 
2006.



\end{thebibliography}

\end{document}